\numberwithin{equation}{section}
\newcommand{\reviewerA}[1]{{#1}}
\newcommand{\reviewerB}[1]{{#1}}
\newcommand{\bds}{\boldsymbol}
\newcommand{\bmat}[1]{\begin{bmatrix}#1\end{bmatrix}} 
\newcommand{\sign}[1]{\mathrm{sign}(#1)}
\newtheorem{theorem}{Theorem}[section]
\newtheorem{corollary}{Corollary}[section]
\newtheorem{lemma}{Lemma}[section]
\newtheorem{remark}{Remark}
\newtheorem{definition}{Definition}
\newtheorem{proposition}{Proposition}[section]
\newenvironment{proof}{\paragraph{Proof}}{$\square$}
\newcommand{\RR}[1]{\ensuremath{\mathbb{R}^{ #1 }}}
\newcommand{\RRstar}[1]{\ensuremath{\mathbb{R}_\star^{ #1 }}}
\newcommand{\NN}{\mathbb{N}}
\newcommand{\RRplus}[1]{\ensuremath{\mathbb{R}_+^{ #1 }}}
\newcommand{\natNo}{\NN}
\newcommand{\nat}[1]{\natNo(#1)}
\newcommand{\innat}[1]{\in\nat{#1}}
\newcommand{\innatSeq}[1]{=1,\ldots,#1}
\newcommand{\defeq}{:=}
\newcommand{\range}[1]{\mathrm{Ran}\left(#1\right)}
\newcommand{\rank}[1]{\mathrm{rank}\left(#1\right)}
\newcommand{\diag}[1]{\mathrm{diag}\left(#1\right)}
\newcommand{\cond}[1]{\mathrm{cond}\left(#1\right)}
\newcommand{\abs}[1]{|#1|}
\newcommand{\norm}[1]{\|#1\|}
\newcommand{\timeSymb}{t}
\newcommand{\dt}{\Delta \timeSymb}
\newcommand{\timestepit}{n}
\newcommand{\timeArg}[1]{{\timeSymb^{#1}}}
\newcommand{\timeDummy}{\tau}
\newcommand{\solSymb}{x}
\newcommand{\sol}{\bds \solSymb}
\newcommand{\state}{\sol}
\newcommand{\stateEntry}[1]{\solSymb_{#1}}
\newcommand{\stateArg}[1]{\state^{#1}}
\newcommand{\stateInit}{\stateArg{0}}
\newcommand{\stateInitEntry}[1]{\solSymb^0_{#1}}
\newcommand{\decomp}[1]{\bar{#1}}
\newcommand{\solDecompSymb}{\decomp{\solSymb}}
\newcommand{\stateDecomp}{\decomp{\state}}
\newcommand{\stateDecompEntry}[1]{\solDecompSymb_{#1}}
\newcommand{\unknownSymb}{w}
\newcommand{\unknown}{\bds{\unknownSymb}}
\newcommand{\unknownEntry}[1]{\unknownSymb_{#1}}
\newcommand{\unknownRed}{\hat\unknown}
\newcommand{\error}{\mathcal E}
\newcommand{\errorInst}{\varepsilon}
\newcommand{\relativeError}{{\error_{\state}}}
\newcommand{\relativeErrorInst}[1]{\errorInst_{\state}^{#1}}
\newcommand{\relativeGlobalError}{\error_{\state,\text{global}}}
\newcommand{\relativeGlobalErrorInst}[1]{\errorInst_{\state,\text{global}}^{#1}}
\newcommand{\violation}{\error_{\res,\text{global}}}
\newcommand{\violationInst}[1]{\errorInst_{\res,\text{global}}^{#1}}
\newcommand{\stateApprox}{\tilde\state}
\newcommand{\stateApproxArg}[1]{\stateApprox^{#1}}
\newcommand{\nspacedof}{N}
\newcommand{\ndof}{\nspacedof}
\newcommand{\ndofFace}{\nConservation\nFaces}
\newcommand{\ndofDecomp}{\decomp{\nspacedof}}
\newcommand{\ndofDecompDecomp}{\decomp{\decomp{\nspacedof}}}
\newcommand{\spaceVar}{\vec x}
\newcommand{\spaceVarScalar}{ x}
\newcommand{\nspaceDim}{d}
\newcommand{\spaceDomain}{\Omega}
\newcommand{\dVol}{\, \mathrm{d}\spaceVar}
\newcommand{\dVolScalar}{\, \mathrm{d}\spaceVarScalar}
	\newcommand{\dInterface}{\, \mathrm{d}\vec s(\spaceVar)}
	\newcommand{\dInterfaceScalar}{\, \mathrm{d} s(\spaceVarScalar)}
\newcommand{\weightingMatrix}{}
\newcommand{\weightingMatrixT}{}
\newcommand{\weightingMatPrint}{\bds{A}}
\newcommand{\weightingMatrixArgs}[2]{\weightingMatrix}
\newcommand{\objGalOneArgs}[3]{f_\mathrm{G,I}(#1,#2;#3)}
\newcommand{\objGalTwoArgs}[3]{f_\mathrm{G,II}(#1,#2;#3)}
\newcommand{\objGalGenArgs}[4]{f_\mathrm{G,#4}(#1,#2;#3)}
\newcommand{\objLSPGOneArgs}[2]{f^n_\mathrm{P,I}(#1;#2)}
\newcommand{\objLSPGTwoArgs}[2]{f^n_\mathrm{P,II}(#1;#2)}
\newcommand{\objLSPGGenArgs}[3]{f^n_\mathrm{P,#3}(#1;#2)}
\newcommand{\constraintGalOneArgs}[3]{\bds{c}_\mathrm{G,I}(#1,#2;#3)}
\newcommand{\constraintGalTwoArgs}[3]{\bds{c}_\mathrm{G,II}(#1,#2;#3)}
\newcommand{\constraintGalGenArgs}[4]{\bds{c}_\mathrm{G,#4}(#1,#2;#3)}
\newcommand{\constraintLSPGOneArgs}[2]{\bds{c}^n_\mathrm{P,I}(#1;#2)}
\newcommand{\constraintLSPGTwoArgs}[2]{\bds{c}^n_\mathrm{P,II}(#1;#2)}
\newcommand{\constraintLSPGGenArgs}[3]{\bds{c}^n_\mathrm{P,#3}(#1;#2)}
\newcommand{\testBasis}{\bds{\Psi}}
\newcommand{\testBasisArgs}[3]{\testBasis^{#1}(#2;#3)}
\newcommand{\testBasisArgsTwo}[2]{\testBasis^{#1}(#2)}
\newcommand{\paramSymb}{\mu}
\newcommand{\params}{\bds \paramSymb}
\newcommand{\param}{\params}
\newcommand{\paramsDummy}{\bds \nu}
\newcommand{\paramDomain}{\mathcal D}
\newcommand{\nparam}{n_\mu}
\newcommand{\timeVar}{t}
\newcommand{\finaltime}{T}
\newcommand{\totaltime}{\finaltime}
\newcommand{\timeDomain}{[0,\finaltime]}
\newcommand{\ntimedof}{{N_T}}
\newcommand{\indicatorSymb}{I}
\newcommand{\indicator}[1]{\indicatorSymb(#1)}
\newcommand{\velocitySymb}{f}
\newcommand{\velocityFluxSymb}{\velocitySymb^\fluxSymb}
\newcommand{\velocitySourceSymb}{\velocitySymb^\sourceSymb}
\newcommand{\velocityDecompSymb}{\decomp f}
\newcommand{\velocityHyperSymb}{\hyper{f}}
\newcommand{\velocityHyper}{\bds{\velocityHyperSymb}}
\newcommand{\velocityHyperArgs}[3]{\velocityHyper(#1,#2;#3)}
\newcommand{\velocityDecompFluxSymb}{\decomp \velocitySymb^\fluxSymb}
\newcommand{\velocityDecompSourceSymb}{\decomp \velocitySymb^\sourceSymb}
\newcommand{\velocity}{\bds{\velocitySymb}}
\newcommand{\velocityArgs}[3]{\velocity(#1,#2;#3)}
\newcommand{\velocityParams}{\velocity(\params)}
\newcommand{\velocityEntry}[1]{\velocitySymb_{#1}}
\newcommand{\velocityOptDummy}{\bds{v}}
\newcommand{\velocityOptDummyTwo}{\bds{w}}
\newcommand{\velocityOptDummyRed}{\hat\velocityOptDummy}
\newcommand{\velocityOptDummyRedOne}{\velocityOptDummyRed_1}
\newcommand{\velocityOptDummyRedTwo}{\velocityOptDummyRed_2}
\newcommand{\feasibleSet}{\mathcal F}
\newcommand{\feasibleSetGal}{\feasibleSet_\mathrm{G}}
\newcommand{\feasibleSetGalArgs}[3]{\feasibleSetGal(#1,#2;#3)}
\newcommand{\feasibleSetLSPG}{\feasibleSet_\mathrm{P}}
\newcommand{\feasibleSetLSPGArgs}[2]{\feasibleSetLSPG^{#1}(#2)}
\newcommand{\leftSing}{\bds{U}_\mathrm{G}}
\newcommand{\Sing}{\bds{\Sigma}_\mathrm{G}}
\newcommand{\rightSing}{\bds{V}_\mathrm{G}}
\newcommand{\nullBasis}{\bds{Z}_\mathrm{G}}
\newcommand{\rightSingFull}{\bar{\bds{V}}_\mathrm{G}}
\newcommand{\nullBasisFull}{\bar{\bds{Z}}_\mathrm{G}}
\newcommand{\leftSingP}{\bds{U}_\mathrm{P}^n}
\newcommand{\leftSingPT}{[\bds{U}_\mathrm{P}^n]^T}
\newcommand{\SingP}{\bds{\Sigma}_\mathrm{P}^n}
\newcommand{\SingPInv}{[\bds{\Sigma}_\mathrm{P}^n]^{-1}}
\newcommand{\rightSingP}{{\bds{V}_\mathrm{P}^n}}
\newcommand{\rightSingPT}{[\bds{V}_\mathrm{P}^n]^T}
\newcommand{\rightSingPFull}{\bar{\bds{V}}_\mathrm{P}^n}
\newcommand{\rightSingPFullT}{[\bar{\bds{V}}_\mathrm{P}^n]^T}
	\newcommand{\nullBasisPFull}{\bar{\bds{Z}}_\mathrm{P}^n}
	\newcommand{\nullBasisPFullT}{[\bar{\bds{Z}}_\mathrm{P}^n]^T}
	\newcommand{\normedQuantityP}{\zeta_\mathrm{P}^n}
	\newcommand{\normedQuantityG}{\zeta_\mathrm{G}}
\newcommand{\SingC}{{\bds{\Sigma}}_\meshMapping}
\newcommand{\leftSingC}{{\bds{U}}_\meshMapping}
\newcommand{\rightSingCFull}{{\bds{V}}_\meshMapping}
\newcommand{\nullBasisCFull}{{\bds{Z}}_\meshMapping}
\newcommand{\pinv}[1]{(#1)^+}
\newcommand{\orthomat}[2]{\RR{#2\times #1}} 
\newcommand{\subspaceRef}{\bar\stateDummy}
\newcommand{\subspaceOrthoBasis}{\bds{Q}}
\newcommand{\velocityFlux}{\bds{\velocitySymb}^{\fluxSymb}}
\newcommand{\velocityFluxEntryArgs}[4]{\velocityFluxSymb_{#1}(#2,#3;#4)}
\newcommand{\velocityFluxVecArgs}[3]{\velocityFlux(#1,#2;#3)}
\newcommand{\velocityFluxHyper}{\hyper{\bds{\velocitySymb}}^{\fluxSymb}}
\newcommand{\velocitySource}{\bds{\velocitySymb}^{\sourceSymb}}
\newcommand{\velocitySourceEntryArgs}[4]{\velocitySourceSymb_{#1}(#2,#3;#4)}
\newcommand{\velocitySourceVecArgs}[3]{\velocitySource(#1,#2;#3)}
\newcommand{\velocitySourceHyper}{\hyper{\bds{\velocitySymb}}^{\sourceSymb}}
\newcommand{\sVal}[1]{\sigma_{#1}}
\newcommand{\velocityDecompSymbol}{\bds{\velocityDecompSymb}}
\newcommand{\velocityDecomp}{\meshMapping\velocity}
\newcommand{\velocityDecompDecomp}{\meshMappingDecomp\velocity}
\newcommand{\velocityDecompRed}{\hat{\decomp{\bds f}}}
\newcommand{\velocityDecompRedArgs}[3]{\velocityDecompRed(#1,#2;#3)}
\newcommand{\velocityDecompArgs}[3]{\velocityDecomp(#1,#2;#3)}
\newcommand{\velocityDecompFlux}{\bds{\velocityDecompSymb}^{\fluxSymb}}
\newcommand{\velocityDecompFluxEntryArgs}[4]{\velocityDecompFluxSymb_{#1}(#2,#3;#4)}
\newcommand{\velocityDecompFluxVecArgs}[3]{\velocityDecompFlux(#1,#2;#3)}
\newcommand{\velocityDecompSource}{\bds{\velocityDecompSymb}^{\sourceSymb}}
\newcommand{\velocityDecompSourceEntryArgs}[4]{\velocityDecompSourceSymb_{#1}(#2,#3;#4)}
\newcommand{\velocityDecompSourceVecArgs}[3]{\velocityDecompSource(#1,#2;#3)}
\newcommand{\faceFluxSymb}{h}
\newcommand{\faceFluxEntryArgs}[4]{\faceFluxSymb_{#1}(#2,#3;#4)}
\newcommand{\faceFluxVec}{\bds{\faceFluxSymb}}
\newcommand{\faceFluxVecArgs}[3]{\faceFluxVec(#1,#2;#3)}
\newcommand{\faceFluxHyperVec}{\hyper{\bds{\faceFluxSymb}}}
\newcommand{\sampleMat}{\bds{P}}
\newcommand{\nsamplesres}{n_{p,\resSymb}}
\newcommand{\nsamplesvelocity}{n_{p,\velocitySymb}}
\newcommand{\nsamplessource}{n_{p,\sourceSymb}}
\newcommand{\nsamplesflux}{n_{p,\faceFluxSymb}}
\newcommand{\polyArg}[4]{I^{#1}_{#2}(#3;#4)}
\newcommand{\polyGenArg}[2]{I(#1;#2)}
\newcommand{\polyArgFew}[2]{I^{#1}_{#2}}
\newcommand{\polyGenArgFew}{I}
\newcommand{\lagrangebasis}[4]{\prod_{j=1, j\neq
#3}^#2\frac{#4-#4^{n+1-j}}{#4^{#1+1-#3}-#4^{#1+1-j}}}
\newcommand{\stateDummy}{\bds{\xi}}
\newcommand{\stateDummyArg}[1]{\stateDummy^{#1}}
\newcommand{\stateOptDummy}{\bds{z}}
\newcommand{\stateOptDummyRed}{\hat\stateOptDummy}
\newcommand{\stateFOM}[1]{\state_\star^{#1}}
\newcommand{\stateGal}[1]{\stateRed_\mathrm{G}^{#1}}
\newcommand{\stateLSPG}[1]{\stateRed_\mathrm{P}^{#1}}
\newcommand{\errorGal}[1]{\delta\stateRed_\mathrm{G}^{#1}}
\newcommand{\errorLSPG}[1]{\delta\stateRed_\mathrm{P}^{#1}}
\newcommand{\f}[2]{\bds{\velocitySymb}(#1,#2)}
\newcommand{\fSimple}[2]{\bds{\velocitySymb}(#1)}
\newcommand{\testBasisAAShortn}{{\bds{\Psi}}^n}
\newcommand{\testBasisAAShortnT}{({\bds{\Psi}}^n)^T}
\newcommand{\testBasisShortnT}{({\bds{\Psi}}^n)^T}
\newcommand{\resPrev}[1]{\bar\res^n[#1{n-k},\ldots,#1{n-1}]}
\newcommand{\resFOMPrev}{\resPrev{\stateFOM}}
\newcommand{\resGalPrev}{\podstate^T\resPrev{\podstate\stateGal}}
\newcommand{\resGalPrevC}{\resPrev{\podstate\stateGal}}
\newcommand{\resGalDiffC}{\delta\bar\res_\mathrm{G}^{n-1}}
\newcommand{\resLSPGPrev}{(\testBasisAAShortnT\podstate)^{-1}\testBasisAAShortnT\resPrev{\podstate\stateLSPG}}
\newcommand{\resLSPGDiff}{\delta\res_\mathrm{P}^{n-1}}
\newcommand{\resLSPGDiffP}{\delta\res_\mathrm{P,\star}^{n-1}}
\newcommand{\lipschitzConstant }{\kappa}
\newcommand{\normtwo }[1]{\|#1\|_2}
\newcommand{\GalProj }{\mathbb V}
\newcommand{\LSPGProj }{\mathbb P^n}
\newcommand{\LSPGProjT }{[\mathbb P^n]^T}
\newcommand{\compositeProj}{(\nullBasisPFull\nullBasisPFullT\LSPGProj+\rightSingPFull\SingPInv\leftSingPT\meshMapping)}
\newcommand{\ProjDiff }{ \Delta^n}
\newcommand{\hCoeff }[1]{h^{#1}}
\newcommand{\hCoeffC }[1]{\bar h^{#1}}
\newcommand{\epsilonCoeff }[2]{\varepsilon_{#1}^{#2}}
\newcommand{\epsilonCoeffP }[2]{\varepsilon_{\mathrm{P},#1}^{#2}}
\newcommand{\gammaCoeff }[2]{\gamma_{#1}^{#2}}
\newcommand{\epsilonCoeffC}[2]{\bar\varepsilon_{#1}^{#2}}
\newcommand{\gammaCoeffC}[2]{\bar\gamma_{#1}^{#2}}
\newcommand{\nConservation}{\ensuremath{n_u}}
\newcommand{\nControlVol}{{N_\Omega}}
\newcommand{\nConstraints}{\ensuremath{\decomp \ndof}}
\newcommand{\nSubdomains}{\ensuremath{{N_{\decomp\Omega}}}}
\newcommand{\nSubdomainsDecomp}{\ensuremath{{N_{\decomp{\decomp\Omega}}}}}
\newcommand{\nsubdomains}{\nSubdomains}
\newcommand{\penaltyParam}{\rho}
\newcommand{\consMapNo}{\mathcal I}
\newcommand{\consMap}[2]{\consMapNo(#1,#2)}
\newcommand{\consMapDecompNo}{\decomp{\consMapNo}}
\newcommand{\consMapDecomp}[2]{\consMapDecompNo(#1,#2)}
\newcommand{\consMapDecompDecompNo}{\decomp{\decomp{\consMapNo}}}
\newcommand{\consMapDecompDecomp}[2]{\consMapDecompDecompNo(#1,#2)}
\newcommand{\faceMapNo}{\mathcal J}
\newcommand{\faceMap}[2]{\faceMapNo(#1,#2)}
\newcommand{\nFaces}{N_\face}
\newcommand{\alignment}{\bds{B}}
\newcommand{\alignmentEntry}[2]{b_{#1,#2}}
\newcommand{\alignmentDecomp}{\decomp{\alignment}}
\newcommand{\alignmentDecompEntry}[2]{\decomp b_{#1,#2}}
\newcommand{\kronecker}[2]{\delta_{#1#2}}
\newcommand{\conservedQuantityNo}{u}
\newcommand{\conservedQuantityVec}{\bds{u}}
\newcommand{\conservedQuantity}[1]{\conservedQuantityNo_{#1}}
\newcommand{\conservedQuantityUnknown}[1]{\tilde\conservedQuantityNo_{#1}}
\newcommand{\conservedQuantityInitNo}{u^0}
\newcommand{\conservedQuantityInit}[1]{\conservedQuantityInitNo_{#1}}
\newcommand{\constant}{a}
\newcommand{\resSymb}{r}
\newcommand{\resEntry}[1]{\resSymb_{#1}}
\newcommand{\res}{\bds{r}}
\newcommand{\resGal}{\res}
\newcommand{\resGalArgs}[4]{\resGal(#1,#3,#2;#4)}
\newcommand{\resGalHyper}{\hyper\resGal}
\newcommand{\resGalHyperArgs}[4]{\resGalHyper(#1,#3,#2;#4)}
\newcommand{\resGalHyperDecomp}{\hyper{\hyper\resGal}}
\newcommand{\resGalHyperDecompArgs}[4]{\resGalHyperDecomp(#1,#3,#2;#4)}
\newcommand{\resTime}[1]{\res^{#1}}
\newcommand{\resArgs}[3]{\resTime{#1}(#2;#3)}
\newcommand{\resEntryTime}[2]{\resSymb_{#1}^{#2}}
\newcommand{\measure}[1]{\mathrm{meas}(#1)}
\newcommand{\resDecomp}{\meshMapping{\res}}
\newcommand{\resDecompDecomp}{\meshMappingDecomp{{\res}}}
\newcommand{\resDecompTime}[1]{\resDecomp^{#1}}
\newcommand{\resDecompDecompTime}[1]{\resDecompDecomp^{#1}}
\newcommand{\resDecompArgs}[3]{\resDecompTime{#1}(#2;#3)}
\newcommand{\resDecompDecompArgs}[3]{\resDecompDecompTime{#1}(#2;#3)}
\newcommand{\hyper}[1]{\tilde{#1}}
\newcommand{\galSymb}{G}
\newcommand{\lagrangianGal}{\mathcal L_\galSymb}
\newcommand{\lagrangianGalArgs}[4]{\lagrangianGal(#1,#2,#3;#4)}
\newcommand{\lspgSymb}{L}
\newcommand{\lagrangianLSPG}{\mathcal L_\lspgSymb}
\newcommand{\lagrangianLSPGTime}[1]{\lagrangianLSPG^{#1}}
\newcommand{\lagrangianLSPGArgs}[4]{\lagrangianLSPGTime{#1}(#2,#3;#4)}
\newcommand{\deltaVelocityGal}{\delta\velocity_\mathrm{G}}
\newcommand{\deltaVelocityGalArgs}[3]{\deltaVelocityGal(#1,#2;#3)}
\newcommand{\resHyper}{\tilde\res}
\newcommand{\resHyperTime}[1]{\resHyper^{#1}}
\newcommand{\resHyperArgs}[3]{\resHyperTime{#1}(#2;#3)}
\newcommand{\resHyperDecomp}{\tilde{\tilde\res}}
\newcommand{\resHyperDecompTime}[1]{\resHyperDecomp^{#1}}
\newcommand{\resHyperDecompArgs}[3]{\resHyperDecompTime{#1}(#2;#3)}
\newcommand{\podres}{\podstate_\resSymb}
\newcommand{\podflux}{\podstate_\faceFluxSymb}
\newcommand{\podsource}{\podstate_\sourceSymb}
\newcommand{\spatialDimension}{d}
\newcommand{\fluxSymb}{g}
\newcommand{\fluxVec}{\bds{\fluxSymb}}
\newcommand{\fluxVecArg}[1]{\fluxVec_{#1}}
\newcommand{\fluxScalar}{{\fluxSymb}}
\newcommand{\fluxScalarArg}[1]{\fluxScalar_{#1}}
\newcommand{\fluxApproxVecArg}[1]{\bds{\fluxSymb}_{#1}^\mathrm{FV}}
\newcommand{\fluxApprox}[2]{\bds{\fluxSymb}^\mathrm{FV}}
\newcommand{\normalVec}{\bds{n}}
\newcommand{\normalVecArg}[1]{\normalVec_{#1}}
\newcommand{\normalVecDecompArg}[1]{\decomp{\normalVec}_{#1}}
\newcommand{\normalVecFace}{\normalVec^{\face}}
\newcommand{\normalVecFaceArg}[1]{\normalVecFace_{#1}}
\newcommand{\normalVecScalar}{{n}}
\newcommand{\podstate}{\bds{\Phi}}
\newcommand{\podstateVec}[1]{\bds\phi_{#1}}
\newcommand{\nstate}{N}
\newcommand{\stateRed}{\hat\state}
\newcommand{\stateRedGalTime}[1]{\stateRed_\mathrm{G}^{#1}}
\newcommand{\stateRedTime}[1]{\stateRed^{#1}}
\newcommand{\stateRedOptDummy}{\hat\stateOptDummy}
\newcommand{\nstateRed}{p}
\newcommand{\linesearchParam}[1]{\eta^{#1}}
\newcommand{\zero}{\bds{0}}
\newcommand{\identity}{\bds{I}}
\newcommand{\lagrangeMultipliersDummy}{\bds{\gamma}}
\newcommand{\lagrangeMultipliers}{\bds{\lambda}}
\newcommand{\lagrangeMultipliersGal}{\lagrangeMultipliers_\text{G}}
\newcommand{\lagrangeMultipliersGalTime}[1]{\lagrangeMultipliersGal^{#1}}
\newcommand{\lagrangeMultipliersGalSolve}{\frac{d\lagrangeMultipliersGal}{dt}}
\newcommand{\lagrangeMultipliersLSPG}{\lagrangeMultipliers_\text{P}}
\newcommand{\lagrangeMultipliersLSPGSolveTime}[1]{\lagrangeMultipliersLSPG^{#1}}
\newcommand{\podvelocity}{\podstate_{\velocitySymb}}
\newcommand{\nresRed}{\nstateRed_{\resSymb}}
\newcommand{\nvelocityRed}{\nstateRed_{\velocitySymb}}
\newcommand{\nsourceRed}{\nstateRed_{\sourceSymb}}
\newcommand{\nfluxRed}{\nstateRed_{\faceFluxSymb}}
\newcommand{\sampleMatres}{\sampleMat_{\resSymb}}
\newcommand{\sampleMatvelocity}{\sampleMat_{\velocitySymb}}
\newcommand{\sampleMatflux}{\sampleMat_{\faceFluxSymb}}
\newcommand{\sampleMatsource}{\sampleMat_{\sourceSymb}}
\newcommand{\volume}[1]{|\Omega_{#1}|}
\newcommand{\controlVolSize}[1]{|{\Omega_{#1}}|}
\newcommand{\subdomainSymb}{\decomp{\Omega}}
\newcommand{\subdomainSize}[1]{|\subdomainSymb_{#1}|}
\newcommand{\meshMapping}{{\decomp{\bds C}}}
\newcommand{\pinvmeshMapping}{{\decomp{\bds C}}^+}
\newcommand{\meshMappingGlobal}{\meshMapping_\text{global}}
\newcommand{\meshMappingDecomp}{\decomp{\decomp{\bds C}}}
\newcommand{\meshMappingEntry}[2]{\decomp c_{#1,#2}}
\newcommand{\volumes}{{\bds V}}                       %
\newcommand{\volumesEntry}[2]{{v}_{#1,#2}}            %
\newcommand{\volumesDecomp}{\decomp{\volumes}}        %
\newcommand{\volumesDecompEntry}[2]{\decomp{v}_{#1,#2}}
\newcommand{\volumesDecompDecomp}{\decomp{\decomp{\volumes}}}
\newcommand{\volumesDecompDecompEntry}[2]{\decomp{\decomp{v}}_{#1,#2}}
\newcommand{\aggregation}{\decomp{\bds{E}}}
\newcommand{\aggregationEntry}[2]{\decomp{e}_{#1,#2}}
\newcommand{\aggregationDecomp}{\decomp{\decomp{\bds{E}}}}
\newcommand{\aggregationDecompEntry}[2]{\decomp{\decomp{e}}_{#1,#2}}
\newcommand{\aggregationDecompCoarseToCoarser}{\decomp{\decomp{\bds{E}}}'}
\newcommand{\aggregationDecompCoarseToCoarserEntry}[2]{\decomp{\decomp{e}}'_{#1,#2}}
\newcommand{\sourceSymb}{s}
\newcommand{\sourceNo}{\sourceSymb}
\newcommand{\sourceEntry}[1]{\sourceNo_{#1}}
\newcommand{\sourceApproxNo}{\sourceSymb^\mathrm{FV}}
\newcommand{\sourceApproxEntry}[1]{\sourceApproxNo_{#1}}
\newcommand{\mesh}{\mathcal M}
\newcommand{\meshDecomp}{\bar\mesh}
\newcommand{\meshDecompGlobal}{{\bar\mesh}_\text{global}}
\newcommand{\meshDecompDecomp}{\bar{\bar\mesh}}
\newcommand{\generalSubdomain}{\omega}
\newcommand{\generalInterface}{\gamma}
\newcommand{\domain}{\Omega}
\newcommand{\controlVolArg}[1]{\domain_{#1}}
\newcommand{\completecontrolVolArg}[1]{\domain_{#1}}
\newcommand{\Interface}{\Gamma}
\newcommand{\InterfaceArg}[1]{\Interface_{#1}}
\newcommand{\face}{e}
\newcommand{\faceArg}[1]{\face_{#1}}
\newcommand{\faceSet}{\mathcal E}
\newcommand{\faceSetArg}[1]{\faceSet_{#1}}
\newcommand{\subdomainArg}[1]{\subdomainSymb_{#1}}
\newcommand{\subdomainDecompArg}[1]{\decomp{\subdomainSymb}_{#1}}
\newcommand{\completesubdomainArg}[1]{\subdomainSymb_{#1}}
\newcommand{\subdomainInterfaceArg}[1]{\decomp\Interface_{#1}}
\newcommand{\subdomainFaceSet}{\bar\faceSet}
\newcommand{\subdomainFaceSetArg}[1]{\subdomainFaceSet_{#1}}
\newcommand{\FOM}{FOM}
\newcommand{\Gal}{Galerkin}
\newcommand{\LSPG}{LSPG}
\newcommand{\LSPGcnstd}{LSPG-FV}
\newcommand{\GNAT}{GNAT}
\newcommand{\GNATcnstd}{GNAT-FV}
\newcommand{\GNATcnstdArg}[1]{GNAT-FV(#1)}
\newcommand{\densitySymb}{\rho}
\newcommand{\uvelocitySymb}{u}
\newcommand{\energydensity}{e}
\newcommand{\crossareaSymb}{A}
\newcommand{\pressureSymb}{p}
\newcommand{\specificheat}{\gamma}
\newcommand{\energypermass}{\epsilon}
\newcommand{\specificgasconstant}{R}
\newcommand{\speedofsoundSymb}{c}
\newcommand{\machSymb}{M}
\newcommand{\temperatureSymb}{T}
\newcommand{\domainLength}{L}
\newcommand{\paramDomainTrain}{\paramDomain_\text{train}}
\newcommand{\ntrain}{n_\text{train}}
\newcommand{\tensor}{\mathcal X}
\newcommand{\snapshots}{\bds{X}}
\newcommand{\tensorUnfold}[1]{\bds{X}_{(#1)}}
\newcommand{\tensorres}{\mathcal R}
\newcommand{\tensorvelocity}{\mathcal F}
\newcommand{\tensorflux}{\mathcal H}
\newcommand{\tensorsource}{\mathcal S}
\newcommand{\paramTrain}[1]{\params^{#1}_\text{train}}
\newcommand{\kmax}[3]{k_\text{max}(#2,\timeArg{#1};#3)}
\newcommand{\leftsingmatstate}{\bds{U}}
\newcommand{\rightsingmatstate}{\bds{V}}
\newcommand{\singvalmatstate}{\bds{\Sigma}}
\newcommand{\leftsingvecstatei}[1]{\bds{u}_{#1}}
\newcommand{\controlVolSet}{\mathcal K}
\newcommand{\subdomainSet}{\bar{\controlVolSet}}
\def\@author#1{\g@addto@macro\elsauthors{\normalsize%
    \def\baselinestretch{1}%
    \upshape\authorsep#1\unskip\textsuperscript{%
      \ifx\@fnmark\@empty\else\unskip\sep\@fnmark\let\sep=,\fi
      \ifx\@corref\@empty\else\unskip\sep\@corref\let\sep=,\fi
      }%
    \def\authorsep{\unskip,\space}%
    \global\let\@fnmark\@empty
    \global\let\@corref\@empty  
    \global\let\sep\@empty}%
    \@eadauthor={#1}
}
\journal{Journal of Computational Physics}
\begin{document}

\begin{frontmatter}

\title{Conservative model reduction for finite-volume models}

\author{Kevin Carlberg\corref{cor1}\fnref{sandiacor}}
\ead{ktcarlb@sandia.gov}
\ead[url]{sandia.gov/~ktcarlb}
\cortext[cor1]{Corresponding author}
\fntext[sandiacor]{Extreme-scale Data Science and Analytics
Department, Sandia National Laboratories, Livermore, CA 94550.}

\author{Youngsoo Choi\fnref{sandiallnlcor}}
\ead{choi15@llnl.gov}
\ead[url]{people.llnl.gov/choi15}
\fntext[sandiallnlcor]{This work was performed while employed 
in the Extreme-scale Data Science and Analytics
Department, Sandia National Laboratories, Livermore, CA 94550.
 \textit{Current affiliation}: Lawrence Livermore National Laboratories. Lawrence Livermore National Laboratory is operated by Lawrence 
Livermore National Security, LLC, for the U.S.\ Department of Energy, National 
Nuclear Security Administration under Contract DE-AC52-07NA27344.}

\author{Syuzanna Sargsyan\fnref{uwcor}}
\ead{ssusie@uw.edu}
\fntext[uwcor]{This work was performed while pursing a Ph.D.\ at the University
of Washington during employment as a summer intern in the Extreme-scale Data
Science and Analytics Department, Sandia National Laboratories, Livermore, CA
94550.  \textit{Current affiliation}: HERE Technologies}
\address{Sandia National Laboratories}


\begin{abstract}
This work proposes a method for model reduction of finite-volume
models that guarantees the resulting reduced-order model is conservative,
thereby preserving the structure intrinsic to finite-volume discretizations.
The proposed reduced-order models associate with optimization problems
characterized by a minimum-residual objective function and nonlinear equality
constraints that explicitly enforce conservation over subdomains.  Conservative Galerkin projection arises from
formulating this optimization problem at the time-continuous level, while
conservative least-squares Petrov--Galerkin (LSPG) projection associates with
a time-discrete formulation. We equip these approaches with hyper-reduction
techniques in the case of nonlinear flux and source terms, and also provide approaches for handling infeasibility. In addition, we perform analyses that include
deriving conditions under which conservative Galerkin and conservative LSPG
are equivalent, as well as deriving \textit{a posteriori} error bounds. Numerical experiments performed on a
parameterized quasi-1D Euler equation demonstrate the ability of
the proposed method to ensure not only global conservation, but also
significantly lower state-space errors than nonconservative
reduced-order models such as standard Galerkin and LSPG projection.
\noindent 
\end{abstract}

  \begin{keyword}
	nonlinear model reduction \sep  structure preservation\sep finite-volume
	method \sep Galerkin projection \sep least-squares Petrov--Galerkin
	projection \sep conservative schemes


  \end{keyword}

  \end{frontmatter}
\section{Introduction}

The finite-volume method is commonly employed for discretizing systems of
partial differential equations (PDEs) that associate with conservation laws,
especially those in fluid dynamics. Rather than operating on the strong form
of the PDE, the finite-volume method operates on the integral form of the PDE
to numerically enforce conservation over each control volume comprising
the computational mesh. Thus, \textit{conservation} is the primary problem
structure imposed by finite-volume discretizations; this contrasts with other
discretization techniques that aim to preserve other properties, e.g.,
variational principles in the case of the finite-element discretizations.

Unfortunately, the computational burden imposed by high-fidelity finite-volume models is
often prohibitive, as (1) the fine spatiotemporal resolution typically needed
to ensure a verified, validated computational model can lead to extremely
\textit{large-scale models} whose simulations consume months on 
supercomputers, and (2) many engineering problems are \textit{real time} or
\textit{many query} in nature. Such problems require the (parameterized)
computational model to be simulated rapidly either due to a strict
time-to-solution constraint in the case of real-time problems (e.g., model
predictive control) or due to the need for hundreds or thousands of
simulations in the case of many-query problems (e.g., statistical inversion).

Reduced-order models (ROMs) have been developed to mitigate this burden.
These techniques first perform an \textit{offline} stage during which they
execute computationally costly training tasks (e.g., simulating the
high-fidelity model for several parameter instances) to compute a
low-dimensional `trial' basis for the state. Next, these methods execute a
computationally inexpensive \textit{online} stage during which they rapidly
compute approximate solutions for different points in the parameter space by
projection: they compute solutions in the span of the trial basis while
enforcing the high-fidelity model residual to be orthogonal to the subspace
spanned by a
low-dimensional `test' basis. In the presence of nonlinearities, these
techniques also introduce `hyper-reduction' approximations to ensure the cost
of simulating the ROM is independent of the high-fidelity-model dimension.

The most popular model-reduction approach for nonlinear dynamical systems such
as those arising from finite-volume discretizations is Galerkin projection
\cite{sirovich1987tad3,deane1991low,ma2002low}, wherein the test basis is set to be equal to the trial
basis\reviewerB{. The trial basis} is often computed via proper orthogonal decomposition (POD)
\cite{POD}\reviewerB{, but it can also be computed via the reduced-basis method; see Refs.~\cite{haasdonk2008FV,haasdonkExplicit,haasdonk2008reduced}, which apply the classical reduced-basis method to finite-volume problems.} More recently, the least-squares Petrov--Galerkin (LSPG) projection
method \cite{CarlbergGappy,carlbergJCP,carlbergGalDiscOpt} was proposed, which
has been computationally demonstrated to generate accurate and stable responses
for turbulent, compressible flow problems on which Galerkin projection yielded
unstable responses.  Unfortunately, neither Galerkin nor LSPG projection
directly preserves important problem structure related to conservation laws or
finite-volume models.

To address this, alternative projection techniques have been
developed for improving the performance of reduced-order models when applied to conservation laws, particularly those appearing in fluid dynamics. These include stabilizing inner
products applied to finite-difference \cite{rowley2004mrc} and finite-element
discretizations \cite{barone2009stable,kalashnikova2010stability};
introducing dissipation via closure models
\cite{aubry1988dynamics,sirisup2004spectral,Bergmann2009516,wang2012proper,san2013proper}
or numerical dissipation \cite{iollo2000stability}; performing nonlinear
Galerkin projection based on approximate inertial manifolds
\cite{marion1989nonlinear,shen1990long,jolly1991preserving}; including a
pressure-term representation \cite{noack2005need,galletti2004low}; modifying
the POD basis by including many modes (such that dissipative modes are
captured), changing the norm \cite{iollo2000stability}, enabling adaptivity
\cite{Bergmann2009516,carlberg2014adaptive}, or including basis functions that
resolve a range of scales \cite{balajewicz2012stabilization} or respect the
attractor's power balance \cite{balajewicz2013low}; modifying the projection by
adopting a constrained Galerkin \cite{reddy2017constrained,fick2017reduced}\reviewerB{, constrained} Petrov--Galerkin
\cite{Fang2013540}\reviewerB{, or $L^1$-norm minimizing projection \cite{abgrall2015robust}}; \reviewerA{developing approaches tailored to the incompressible Navier--Stokes equations by introducing} stabilizations based on supremizer-enriched
velocity spaces and a pressure Poisson equation 
\cite{stabile2017finite,stabile2017advances} \reviewerA{or by modifying the Galerkin projection 
\cite{lorenzi2016pod}}\reviewerB{; and improving
the ROM's ability to capture shocks
\cite{ohlberger2013nonlinear,gerbeau2014approximated,carlberg2014adaptive,taddei2015reduced}.}
Among these contributions, only a subset is applicable to finite-volume
discretizations. Further, no model-reduction method to date has been developed
to preserve the structure intrinsic to finite-volume
models: conservation.  In particular, none of the above methods ensures that
conservation holds over any subset of the computational domain, which can lead
to spurious growth or dissipation of quantities that should be conserved in
principle.

To this end, this work proposes a novel projection scheme for finite-volume
models that ensures the reduced-order model is conservative over subdomains of
the problem. The approach leverages the minimum-residual formulation of both
Galerkin and least-squares Petrov--Galerkin projection by equipping their
associated optimization problems with (generally nonlinear) equality
constraints that explicitly enforce conservation over subdomains. The resulting \textit{conservative} reduced-order models can be expressed as
the solution to time-dependent saddle-point problems. The approach does not
rely on a particular choice of reduced basis, although the reduced basis can
affect feasibility of the associated optimization problems. New contributions in this work include:
\begin{enumerate} 
	\item Conservative Galerkin (Section \ref{sec:consGal}) and conservative LSPG (Section
		\ref{sec:consLSPG}) projection techniques, which ensure that the reduced-order models
		are conservative over subdomains of the original computational mesh. These
		methods are equipped with
\begin{enumerate} 
\item techniques for handling 
		infeasible constraints (Section \ref{sec:infeasible}), and
	\item hyper-reduction techniques that respect the underlying finite-volume
		discretization to handle nonlinearities in the flux and source terms
		(Section \ref{sec:hyperreduction2}).
\end{enumerate}
	\item Analysis, which includes:
		\begin{enumerate} 
			\item demonstration that conservative Galerkin projection and time
			discretization are commutative (Theorem \ref{sec:commutative}),
			\item sufficient conditions for feasibility of conservative
				Galerkin (Proposition \ref{prop:sufficientFeasGal}) and conservative LSPG (Proposition
				\ref{prop:suffLSPG}) projection,
			\item  conditions under which conservative Galerkin and conservative
				LSPG projection are equivalent (Theorem \ref{thm:equivalent}), and
			\item \textit{a posteriori} bounds (Section \ref{sec:error}) for the error in the 
				quantities conserved over subdomains (Theorem \ref{thm:boundConserved}), in the 
				null space (Lemma \ref{thm:boundNull}) and
				row space (Lemma \ref{thm:boundRow})
				 of the constraints, in the full state (Theorem
				 \ref{thm:localaposteriori}), and in the conserved quantities (Lemma
				 \ref{thm:boundConserved} and Theorem \ref{thm:boundConserved}).
		\end{enumerate}
	\item Numerical experiments on a parameterized quasi-1D Euler equation
		associated with modeling inviscid compressible flow in a 
		converging--diverging nozzle (Section \ref{sec:experiments}). These experiments demonstrate the merits of the
		proposed method and illustrate the importance of ensuring reduced-order
		models are globally conservative.
\end{enumerate}
 We remark that this work was first presented
publically at the ``Recent Developments in Numerical Methods for Model
Reduction'' workshop at the Institut Henri Poincar\'e on  November 10, 2016.

Other works have also explored formulating reduced-order models that
associate with constrained optimization problems.  Zimmermann et al.\ \cite{zimmermann2014reduced} equip
equality `aerodynamic constraints' to ROMs applied to steady-state external flows, where the
constraints associate with matching experimental data or target performance metrics in a design setting.  Recently,
Reddy et al.\ \cite{reddy2017constrained} propose equipping the time-discrete
Galerkin ROM with inequality constraints that enforce solution positivity or a
bound on the gas-void fraction. Relatedly, Fick et al.\ \cite{fick2017reduced}
proposed a modified Galerkin optimization problem applicable to the
incompressible Navier--Stokes equations, where the inequality constraints
associate with bounds on the generalized coordinates; these bounds correspond
to the extreme values of the generalized coordinates arising during the
training simulations.

The remainder of this paper is organized as follows. Section \ref{sec:fvm}
describes finite-volume discretizations of conservation laws (Section
\ref{sec:spatial}) discretized in time with a linear multistep scheme (Section
\ref{sec:timeInt}). Section \ref{sec:ROM} describes the (standard)
nonlinear model-reduction methods of Galerkin (Section
\ref{sec:galerkin}) and LSPG (Section \ref{sec:lspg})
projection, as well as their hyper-reduced variants (Section \ref{sec:hyperreduction}) and interpretations when applied to finite-volume
models (Section \ref{sec:ROMconserv}). Section \ref{sec:proposed} describes the proposed methodology, which is based on
enforcing conservation over decompositions (Section \ref{sec:DD}) of the computational mesh. Here, Section
\ref{sec:consGal} describes the proposed conservative Galerkin projection
technique, Section \ref{sec:consLSPG} describes
the proposed conservative LSPG projection method, Section \ref{sec:infeasible} describes
approaches for handling constraint infeasibility, Section
\ref{sec:hyperreduction2} describes the application of hyper-reduction to the
constraints that respects the underlying finite-volume discretization, and
Section \ref{sec:training} describes briefly how the quantities required for
the proposed ROMs can be constructed from training data. Next, Section \ref{sec:analysis}
performs analysis, including proving sufficient conditions for feasibility (Section
\ref{sec:feasibility}), providing conditions under with the conservative Galerkin and
conservative LSPG models are equivalent (Section \ref{sec:equivalence}), and
deriving local \textit{a posteriori} error analysis (Section \ref{sec:error}). 
Section \ref{sec:experiments} demonstrates the benefits off the proposed method on a
parameterization of the one-dimensional (compressible) Euler equations applied to  a converging--diverging
nozzle. Finally, Section \ref{sec:conclusions} concludes the paper.

In this work,  matrices are denoted by capitalized bold
letters, vectors by lowercase bold letters, and scalars by unbolded letters. The
columns of a matrix $\bds{A}\in\RR{m\times {\timestepit}}$ are denoted by
$\bds{a}_i\in\RR{m}$, $i\innat{{\timestepit}}$ with $\nat{a}\defeq\{1,\ldots,
a\}$ such that $\bds{A}\equiv\left[\bds{a}_1\ \cdots\
\bds{a}_{\timestepit}\right]$. 
The scalar-valued matrix elements are denoted
by $a_{ij}\in\RR{}$ such that  $\bds{a}_j\equiv\left[a_{1j}\ \cdots\
a_{mj}\right]^T$, $j\innat{{\timestepit}}$. A superscript
denotes the value of a variable at that time instance, e.g.,
$\state^\timestepit$ is the value of $\state$ at time $\timestepit\dt$, where $\dt$ is the time
step.
\section{Finite-volume discretization}\label{sec:fvm}
This work considers parameterized systems of conservation laws. In integral
form, the associated governing equations correspond to
\begin{equation}\label{eq:PDE}
\frac{d}{dt}\int_{\generalSubdomain}{\conservedQuantity{i}}(\spaceVar,\timeVar
;\params)\dVol +
\int_{\generalInterface}\fluxVecArg{i}(\spaceVar,\timeVar
;\params)\cdot\normalVec(\spaceVar)\dInterface =
\int_{\generalSubdomain}{\sourceEntry{i}}(\spaceVar,\timeVar
;\params)\dVol,\quad i\innat{\nConservation},\
\forall\generalSubdomain\subseteq\domain,
\end{equation}
which is solved in
time domain $\timeDomain$ with final time $\finaltime\in\RRplus{}$,
and a (parameterized) initial condition denoted by 
$\conservedQuantityInit{i}:\domain\times\paramDomain\rightarrow\RR{}$
such that $\conservedQuantity{i}(\spaceVar,0
;\params) = \conservedQuantityInit{i}(\spaceVar
;\params)$. 
Here, $\generalSubdomain$ with
$\generalInterface\defeq\partial \generalSubdomain$ denotes any subset of the
spatial 
domain of interest
$\spaceDomain\subset\RR{\nspaceDim}$ with $\nspaceDim\leq 3$, whose boundary
is 
$\Interface\defeq\partial \spaceDomain$, 
$\dInterface$ denotes integration with respect to the boundary,
$\conservedQuantity{i}:\spaceDomain\times\timeDomain\times
\paramDomain\rightarrow \RR{}$, $i\innat{\nConservation}$ denotes the
$i$th conserved variable (per unit volume);
$\fluxVecArg{i}:\spaceDomain\times\timeDomain\times
\paramDomain\rightarrow \RR{\nspaceDim}$, $i\innat{\nConservation}$,
denotes the flux associated with the $i$th conserved
variable (per unit area per unit time);
$\normalVec:\generalInterface\rightarrow\RR{\nspaceDim}$
denotes the outward unit normal to $\generalSubdomain$;
$\sourceEntry{i}:\spaceDomain\times\timeDomain\times \paramDomain\rightarrow
\RR{}$, $i\innat{\nConservation}$ denotes the source associated with the
$i$th conserved variable (per unit volume per unit time);
and
 $\paramDomain\subseteq\RR{\nparam}$ denotes the parameter domain.
We assume the domain $\domain$ is independent of the
parameters $\params$ for notational simplicity.

\subsection{Spatial discretization}\label{sec:spatial}
We consider the particular case where the governing equations \eqref{eq:PDE}
have been discretized in space by a finite-volume method. This implies that
the spatial domain has been partitioned into 
a mesh $\mesh$ of
$\nControlVol\in\natNo$
non-overlapping (closed, connected) control volumes  $\controlVolArg{i}\subseteq\domain$, $i\innat{\nControlVol}$ such
that $\spaceDomain = \cup_{i=1}^{\nControlVol}\completecontrolVolArg{i}$,
which
intersect only on their ($d-1$)-dimensional interface, i.e., $\measure{\controlVolArg{i}\cap
\controlVolArg{j}}=0$ for $i\neq j$, where
$\measure{\generalSubdomain}\defeq\int_\generalSubdomain \dVol$,
$\forall \generalSubdomain\subseteq\domain$. 
We define the mesh as
$\mesh \defeq \{\controlVolArg{i}\}_{i=1}^{\nControlVol}$,
and we denote
the
boundary of the $i$th control volume by
$\InterfaceArg{i}\defeq\partial\controlVolArg{i}$. 
The $i$th control-volume boundary is
partitioned into a set of faces\footnote{We note that this is a set of faces
	for $\spatialDimension=3$, faces
for $\spatialDimension=2$, or simply extremities for $\spatialDimension=1$.} denoted
by $\faceSetArg{i}$ such that 
$\InterfaceArg{i} =
\{\spaceVar\, |\, \spaceVar\in \face,\ \forall \face\in\faceSetArg{i},\
i\innat{|\faceSetArg{i}|}\}$. Then the full set of
$\nFaces$ faces within the mesh is $\faceSet\equiv\{\faceArg{i}\}_{i=1}^{\nFaces}\defeq
\cup_{i=1}^{\nControlVol}\faceSetArg{i}$. Applying Eq.~\eqref{eq:PDE} to each
control volume in the mesh yields
\begin{equation}\label{eq:PDEaftermesh}
\frac{d}{dt}\int_{\controlVolArg{j}}{\conservedQuantity{i}}(\spaceVar,\timeVar
;\params)\dVol +
\int_{\InterfaceArg{j}}\fluxVecArg{i}(\spaceVar,\timeVar
;\params)\cdot \normalVecArg{j}(\spaceVar)\dInterface =
\int_{\controlVolArg{j}}{\sourceEntry{i}}(\spaceVar,\timeVar
;\params)\dVol,\quad i\innat\nConservation,\ j\innat\nControlVol,
\end{equation}
where
$\conservedQuantityVec\equiv\left(\conservedQuantity{1},\ldots,\conservedQuantity{\nConservation}\right)$
and $\normalVecArg{j}:\InterfaceArg{j}\rightarrow\RR{\nspaceDim}$
denotes the unit normal to control volume $\controlVolArg{j}$.
Finite-volume schemes complete the spatial discretization by 
introducing a 
state vector
$\state:\timeDomain\times \paramDomain\rightarrow
 \RR{\ndof}$ with $\ndof = \nControlVol\nConservation$ whose elements comprise
 \begin{equation}
\stateEntry{\consMap{i}{j}}(t;\params) =
\frac{1}{\controlVolSize{j}}\int_{\controlVolArg{j}}{\conservedQuantity{i}}(\spaceVar,\timeVar
;\params)\dVol,\quad i\innat\nConservation,\ j\innat\nControlVol, 
\label{eq:stateDefinition}
\end{equation}
where
$\consMapNo:\nat{\nConservation}\times
\nat{\nControlVol}\rightarrow\nat{\ndof}$ denotes a mapping from
conservation-law index and control-volume index to degree of freedom,
and a velocity vector
$\velocity:(\stateDummy,\timeDummy;\paramsDummy)\mapsto\velocityFlux(\stateDummy,\timeDummy;\paramsDummy) + \velocitySource(\stateDummy,\timeDummy;\paramsDummy)$
with $\velocityFlux,\velocitySource:\RR{\ndof}\times\timeDomain\times \paramDomain\rightarrow
 \RR{\ndof}$ whose elements consist of
\begin{align}\label{eq:velocityDefinition}
	\begin{split}
	\velocityFluxEntryArgs{\consMap{i}{j}}{\state}{t}{\params}&=
-\frac{1}{\controlVolSize{j}}\int_{\InterfaceArg{j}}\fluxApproxVecArg{i}(\state;\spaceVar,\timeVar
;\params)\cdot\normalVecArg{j}(\spaceVar)\dInterface \\
\velocitySourceEntryArgs{\consMap{i}{j}}{\state}{t}{\params}&=
\frac{1}{\controlVolSize{j}}\int_{\controlVolArg{j}}{\sourceApproxEntry{i}}(\state;\spaceVar,\timeVar
;\params)\dVol
	\end{split}
\end{align}
for $i\innat\nConservation,\ j\innat\nControlVol$.
Here,
$\fluxApproxVecArg{i}:\RR{\ndof}\times\spaceDomain\times\timeDomain\times
\paramDomain\rightarrow \RR{\nspaceDim}$, $i\innat{\nConservation}$ denotes the approximated (or reconstructed) flux associated
with the $i$th conserved variable (per unit area per unit
time); and
$\sourceApproxEntry{i}:\RR{\ndof}\times\spaceDomain\times\timeDomain\times \paramDomain\rightarrow
\RR{}$, $i\innat{\nConservation}$ denotes the approximated source associated with the
$i$th conserved variable (per unit volume per unit time), which may arise,
e.g., from applying a quadrature rule to evaluate the integral. We
emphasize that both
the approximated flux $\fluxApproxVecArg{i}$ and approximated source 
	$\sourceApproxEntry{i}$
will in general depend on the
entire state vector $\state$, e.g., due to high-order flux reconstructions or
reactions, respectively. 

Substituting
$\int_{\controlVolArg{j}}{\conservedQuantity{i}}(\spaceVar,\timeVar
;\params)\dVol\leftarrow
\controlVolSize{j}\stateEntry{\consMap{i}{j}}(t;\params)
$, $\fluxVecArg{i}\leftarrow \fluxApproxVecArg{i}$, and 
$\sourceEntry{i}\leftarrow\sourceApproxEntry{i}$ in Eq.~\eqref{eq:PDEaftermesh} 
and dividing by $\controlVolSize{j}$
yields
 \begin{equation} \label{eq:fom}
  \frac{d\state}{dt} = \velocityArgs{\state}{\timeVar}{\params},\quad\quad
  \state(0;\params) = \stateInit(\params),
 \end{equation} 
 where $\stateInitEntry{\consMap{i}{j}}(\params)
 \defeq\frac{1}{\volume{i}}\int_{\controlVolArg{j}}\conservedQuantityInit{i}(\spaceVar
;\params)\dVol$.
This is a parameterized system of nonlinear ordinary differential equations
(ODEs) characterizing an initial value problem, which we consider to be our 
full-order model (FOM). 

\begin{remark}[Full-order model ODE: finite-volume interpretation]\label{rem:fomODE}
	From the definitions of the state \eqref{eq:stateDefinition} and
	velocity \eqref{eq:velocityDefinition}, the full-order-model ODE
	residual element $
{d\stateEntry{\consMap{i}{j}}}/{dt} - 
\velocityEntry{\consMap{i}{j}}
$
can be interpreted as the (normalized) \emph{rate of violation of
conservation} in variable
$\conservedQuantity{i}$ in control volume $\controlVolArg{j}$ at time instance
$\timeVar$ under one approximation: the flux and source terms are
approximated using the finite-volume discretization (i.e.,
$\fluxVecArg{i}\leftarrow\fluxApproxVecArg{i}$, and 
$\sourceEntry{i}\leftarrow\sourceApproxEntry{i}$).
\end{remark}

\begin{remark}[Flux velocity from face
	fluxes]\label{rem:fluxVelocityFaceFluxes}
The elements of the flux velocity can be computed
from a vector of face fluxes
$\faceFluxVec:\RR{\ndof}\times\timeDomain\times \paramDomain\rightarrow
 \RR{\ndofFace}$, whose elements are 
\begin{equation} 
	\faceFluxEntryArgs{\faceMap{i}{j}}{\state}{t}{\params} =
	\int_{\faceArg{j}}\fluxApproxVecArg{i}(\state;\spaceVar,\timeVar
	;\params)\cdot\normalVecFaceArg{j}(\spaceVar)\dInterface ,\quad
	i\innat{\nConservation},\
	j\innat{\nFaces},
\end{equation} 
where $\normalVecFaceArg{j}:\faceArg{j}\rightarrow\RR{\nspaceDim}$
denotes the unit normal assigned to face $\faceArg{j}$ (using any 
convention) and 
$\faceMapNo:\nat{\nConservation}\times
\nat{\nFaces}\rightarrow\nat{\ndofFace}$ denotes a mapping from
conservation-law index and face index to degrees of freedom defined on the
faces. This mapping is provided by
\begin{equation} \label{eq:velocityFromFaces}
	\velocityFluxEntryArgs{\consMap{i}{j}}{\state}{t}{\params} =
	\sum_{k\, |\, \faceArg{k}\in\InterfaceArg{j}}
\alignmentEntry{\consMap{i}{j}}{\faceMap{i}{k}}
	\faceFluxEntryArgs{\faceMap{i}{k}}{\state}{t}{\params}\reviewerB{,}
\end{equation} 
where the elements of 
$\alignment\in\RR{\ndof\times \ndofFace}$ 
are 
\begin{align} 
	\alignmentEntry{\consMap{i}{j}}{\faceMap{\ell}{k}}=\begin{cases}
		-\kronecker{i}{\ell}/\controlVolSize{j},\quad &\faceArg{k}\in\InterfaceArg{j};\
		\normalVecArg{j}(\spaceVar) =
		\normalVecFaceArg{k}(\spaceVar),\ 
		\spaceVar\in\faceArg{k}\\
		\kronecker{i}{\ell}/\controlVolSize{j},\quad &\faceArg{k}\in\InterfaceArg{j};\
		\normalVecArg{j}(\spaceVar) =
		-\normalVecFaceArg{k}(\spaceVar),\ 
		\spaceVar\in\faceArg{k}\\
		0,\quad &\text{otherwise,}
	\end{cases}
\end{align} 
where $\kronecker{i}{j}$ denotes the Kronecker delta.
In matrix form, Eq.~\eqref{eq:velocityFromFaces} becomes
\begin{equation} \label{eq:velocityFromFacesMat}
	\velocityFluxVecArgs{\state}{t}{\params} =
	\alignment\faceFluxVecArgs{\state}{t}{\params}\reviewerB{.}
\end{equation} 
This formulation will be exploited in Section \ref{sec:proposed}, where we introduce the
proposed method. 
\end{remark}

The full-order model ODE \eqref{eq:fom} is typically the starting
point for developing reduced-order models for nonlinear dynamical systems. In
this work, we exploit the particular structure underlying the dynamical system
arising from the definitions of the state \eqref{eq:stateDefinition} and
velocity \eqref{eq:velocityDefinition}.

\subsection{Time discretization}\label{sec:timeInt}

A time discretization is required to solve \eqref{eq:fom} numerically. For
simplicity, we
restrict the focus in this work to linear multistep schemes, although other
time integrators could be considered; see, e.g.,
Ref.~\citep{carlbergGalDiscOpt}, which develops LSPG
reduced-order models for  explicit, fully implicit, and diagonally implicit
Runge--Kutta schemes.  Applying a linear $k$-step method to numerically solve
Eq.~\eqref{eq:fom} at a given parameter instance  $\params\in\paramDomain$ can
be written as
\begin{equation}\label{eq:linearMultistepRev}
\sum_{j=0}^k\alpha_j\state^{\timestepit -j} = \dt\sum_{j=0}^k\beta_j\velocityArgs{\state^{\timestepit -j}}{t^{\timestepit -j}}{\params}\reviewerB{,}
\end{equation}
where $\dt\in\RRplus{}$ denotes the time step,
$\state^{k}$ denotes the numerical approximation to $\state(t^k)$, i.e.,
\begin{equation}\label{eq:stateDefinitionDisc}
\stateEntry{\consMap{i}{j}}^k =
\frac{1}{\controlVolSize{j}}\int_{\controlVolArg{j}}{\conservedQuantity{i}^k}(\spaceVar
)\dVol,
\end{equation}
where $\conservedQuantity{i}^k(\spaceVar)$ denotes the 
numerical approximation to $\conservedQuantity{i}(\spaceVar,\timeArg{k})$.
The coefficients $\alpha_j$ and $\beta_j$ define a particular linear multistep
scheme, $\alpha_0\neq 0$ and $\sum_{j=0}^k\alpha_j = 0$ is necessary for
consistency, and
the method is implicit if $\beta_0\neq 0$.
For notational simplicity, we employ a uniform time grid
$\timeArg{k} =
\timeArg{k-1} + \dt$, $k\innat{\ntimedof}$  with $\timeArg{0}=0$ and
$\ntimedof\defeq\finaltime/\dt$.
The fully discrete full-order model, which is sometimes denoted as the FOM O$\Delta$E, 
  is characterized by the following
 system of algebraic equations to be solved at each time instance $\timestepit\in\nat{\ntimedof}$:
 \begin{equation} \label{eq:resLinMultiSolve}
	 \resArgs{\timestepit}{\state^\timestepit}{\params}=0,
 \end{equation}
 where  $\resTime{\timestepit}:\RR{\ndof}\rightarrow\RR{\ndof}$ denotes the 
 linear multistep residual, which is  defined as
 \begin{equation} \label{eq:resLinMulti}
	 \resArgs{\timestepit}{\unknown}{\paramsDummy} \defeq \alpha_0 \unknown -
	 \dt \beta_0 \velocityArgs{\unknown}{t^\timestepit}{\paramsDummy}+
 \sum_{j=1}^k\alpha_j\state^{\timestepit -j}(\paramsDummy) -
 \dt \sum_{j=1}^k\beta_j \velocityArgs{\state^{\timestepit -j}}{t^{\timestepit
 -j}}{\paramsDummy}.
  \end{equation} 
The unknown vector 
$\unknown\in\RR{\ndof}$ 
can be interpreted as
\begin{equation}\label{eq:unknownDefinitionDisc}
\unknownEntry{\consMap{i}{j}} =
\frac{1}{\controlVolSize{j}}\int_{\controlVolArg{j}}{\conservedQuantityUnknown{i}}(\spaceVar
)\dVol,
\end{equation}
and $\conservedQuantityUnknown{i}$ denotes an approximation to the $i$th conserved variable
$\conservedQuantity{i}(\spaceVar,\timeArg{\timestepit})$  when evaluating the residual
\eqref{eq:resLinMulti} at the $n$th time instance.

\noindent \textbf{Adams methods}.  
Adams methods consider the integrated form of Eq.~\eqref{eq:fom}
 \begin{equation} \label{eq:fomInt}
  \state^\timestepit = \state^{\timestepit-1}
+ \int_{\timeArg{\timestepit-1}}^\timeArg{\timestepit}
\velocityArgs{\state}{\timeVar}{ \params}dt,\quad \timestepit\innatSeq{\ntimedof}, 
 \end{equation} 
and apply a polynomial approximation to the integrand.
In particular, the $p$th-order Adams scheme employs coefficients $\alpha_0
= 1$, $\alpha_1 = -1$, and $\alpha_j=0$, $j>1$ and coefficients $\beta_j$ that
associate with a polynomial interpolation of the integrand. In the explicit
($\beta_0=0$)
case, these are Adams--Bashforth methods with
\begin{equation}\label{eq:AdamsBashforthInt}
\dt\sum_{j=1}^k\beta_j\velocityArgs{\state^{n-j}}{\timeArg{n-j}}{\params}=\int_{\timeArg{\timestepit-1}}^\timeArg{\timestepit}
\polyArg{n-1}{k}{\velocityParams}{t} dt,
\end{equation}
where 
$\velocityParams\defeq(\velocityArgs{\stateArg{0}}{\timeArg{0}}{\params},\ldots
\velocityArgs{\stateArg{\ntimedof}}{\timeArg{\ntimedof}}{\params}\reviewerB{)}
$
and the polynomial approximation (in time) of any time-grid-dependent
quantity 
$\stateDummy\defeq(\stateDummyArg{1},\ldots,\stateDummyArg{k})$ using
data at $(t^{n},\ldots, t^{n+1-k})$ (with $k\geq 1$) is 
\begin{equation}
\polyArg{n}{k}{\stateDummy}{t} \defeq
\sum_{i=1}^k\stateDummyArg{n+1-i}\lagrangebasis{n}{k}{i}{t}.
\end{equation}
In the implicit case (with $\beta_0\neq 0$), these are Adams--Moulton
methods with coefficients $\beta_j$ satisfying
\begin{equation}
\dt\sum_{j=0}^k\beta_j\velocity(\state^{n-j},\timeArg{n-j};\params)=\int_{\timeArg{\timestepit-1}}^\timeArg{\timestepit}
\polyArg{n}{k+1}{\velocityParams}{t} dt,
\end{equation}

Thus, the time-discrete residual \eqref{eq:resLinMulti} becomes
\begin{equation}\label{eq:adams}
	\resArgs{\timestepit}{\stateArg{n}}{\params} = \stateArg{n} - \state^{\timestepit-1} -
\int_{\timeArg{\timestepit-1}}^\timeArg{\timestepit}
\polyGenArg{\velocityParams}{t} dt,
\end{equation}
where $\polyGenArgFew = \polyArgFew{n-1}{k}$ in the explicit case and
$\polyGenArgFew = \polyArgFew{n}{k+1}$ in the implicit case.
Substituting the definitions of the time-discrete state
\eqref{eq:stateDefinitionDisc} and velocity \eqref{eq:velocityDefinition} in
\eqref{eq:adams} yields
\begin{align}\label{eq:adamsFV}
\begin{split}
	\resEntry{\consMap{i}{j}}^{\timestepit }(\stateArg{n};\params)= 
&\frac{1}{\controlVolSize{j}}\int_{\controlVolArg{j}}{\conservedQuantity{i}^{n}}(\spaceVar
)\dVol - 
\frac{1}{\controlVolSize{j}}\int_{\controlVolArg{j}}{\conservedQuantity{i}^{n-1}}(\spaceVar
)\dVol\\
&\reviewerB{+}\frac{1}{\controlVolSize{j}}\int_{\timeArg{\timestepit-1}}^{\timeArg{\timestepit}}\int_{\InterfaceArg{j}}
\polyGenArg{\fluxApproxVecArg{i}(\spaceVar;\params)}{t}
\cdot\normalVecArg{j}(\spaceVar)\dInterface d\timeVar
\reviewerB{-}
\frac{1}{\controlVolSize{j}}\int_{\timeArg{\timestepit-1}}^{\timeArg{\timestepit}}\int_{\controlVolArg{j}}\polyGenArg{\sourceApproxEntry{i}(\spaceVar
;\params)}{t}\dVol d\timeVar,
\end{split}
\end{align}
where
\begin{align}
	\begin{split}
		\fluxApproxVecArg{i}(\spaceVar;\params)&\defeq(\fluxApproxVecArg{i}(\stateArg{0};\spaceVar,\timeArg{0}
		;\params),\ldots,
\fluxApproxVecArg{i}(\stateArg{\ntimedof};\spaceVar,\timeArg{\ntimedof} ;\params)
)\\
\sourceApproxEntry{i}(\spaceVar;\params)&\defeq(\sourceApproxEntry{i}(\stateArg{0};\spaceVar,\timeArg{0}
;\params),\ldots,
\sourceApproxEntry{i}(\stateArg{\ntimedof};\spaceVar,\timeArg{\ntimedof} ;\params)
).
\end{split}
\end{align}
\begin{remark}[Full-order model O$\Delta$E: finite-volume interpretation for
	Adams methods]\label{rem:fomODeltaE}
	Eq.\
\eqref{eq:adamsFV}
shows that 
the full-order-model O$\Delta$E
	residual element
$ \resEntry{\consMap{i}{j}}^{\timestepit
} $ in the case of Adams methods can be interpreted as the (normalized)
\emph{violation of conservation} in variable
$\conservedQuantity{i}$ in control volume $\controlVolArg{j}$ over time
interval $[\timeArg{n-1},\timeArg{n}]$ under two approximations: (1) the flux and source terms are
approximated using the finite-volume discretization (i.e.,
$\fluxVecArg{i}\leftarrow\fluxApproxVecArg{i}$, and 
$\sourceEntry{i}\leftarrow\sourceApproxEntry{i}$), and (2) a polynomial
interpolation is used to approximate the integrand for time integration.
\end{remark}

\section{Reduced-order models}\label{sec:ROM}
\reviewerB{During the online stage,} projection-based reduced-order models compute an approximate solution
$\stateApprox\approx \state$ that lies in a low-dimensional affine trial subspace  $\stateApprox(\timeVar;\params)\in\stateInit(\params) + \range{\podstate}$, i.e.,
\begin{equation} \label{eq:stateApprox}
	\stateApprox(\timeVar;\params) = \stateInit(\params) + \podstate\stateRed(\timeVar;\params),
\end{equation} 
where $\podstate\in\RR{\nstate\times \nstateRed}$ is the reduced-basis matrix
of dimension $\nstateRed\leq \nstate$, which we assume without loss of generality satisfies
$\podstate^T\podstate=\identity$, $\stateRed:\timeDomain\times
\paramDomain\rightarrow \RR{\nstateRed}$ denotes the generalized coordinates,
and $\range{\boldsymbol{A}}$ denotes the range of a matrix $\boldsymbol{A}$.
This basis can be computed in a variety of ways \reviewerB{during the offline stage}, e.g., 
eigenmode analysis,
POD \cite{POD}, or the reduced-basis method
\cite{prud2002reliable,rozza2007reduced}. Substituting the approximation
$\state\leftarrow\stateApprox$ into governing equations \eqref{eq:fom} yields
an overdetermined system of $\nstate$ equations in $\nstateRed$ unknowns. To
compute a unique solution, reduced-order models must enforce the residual to
be orthogonal to a $\nstateRed$-dimensional test subspace. Galerkin and LSPG
projection differ in their choices of this subspace; each choice leads to an
approximate solution that exhibits a particular notion of optimality.

\subsection{Galerkin projection}\label{sec:galerkin}

Galerkin projection employs a test subspace of $\range{\podstate}$ and thus
enforces the residual to be orthogonal to $\range{\podstate}$, i.e.,
the Galerkin ODE is
\begin{equation} \label{eq:galODE}
	\frac{d\stateRed}{d\timeVar} = \podstate^T\velocityArgs{\stateInit(\params) +
	\podstate\stateRed}{\timeVar}{\params}, \quad \stateRed(0) = \zero.
\end{equation} 
		Applying a linear multistep scheme to integrate Eq.~\eqref{eq:galODE} in
		time yields the Galerkin O$\Delta$E 
\begin{equation} 
	\podstate^T\resArgs{n}{\stateInit(\params) +
	\podstate\stateRed^n(\params)}{\params}=0.
\end{equation} 
		 
As demonstrated, e.g., in Ref.~\cite{carlbergGalDiscOpt}, Galerkin projection
exhibits continuous optimality if the reduced basis is orthogonal, i.e.,
$\podstate^T\podstate = \identity$, as the Galerkin ROM computes the
approximated velocity that minimizes the $\ell^2$-norm of the FOM ODE residual
\eqref{eq:fom} over $\range{\podstate}$, i.e., 
\begin{equation}\label{eq:galOptimality}
	\frac{d\stateApprox}{d\timeVar}\left(\stateInit(\params) +
	\podstate\stateRed,t;\params\right) =
	\underset{\velocityOptDummy\in\range{\podstate}}{\arg\min}\norm{\resGalArgs{\velocityOptDummy}{\timeVar}{\stateInit
	+ \podstate\stateRed}{\params}}_2
\end{equation}
or equivalently
\begin{equation}\label{eq:galOptimality2}
	\frac{d\stateRed}{d\timeVar}\left(\stateInit(\params) +
	\podstate\stateRed,t;\params\right) =
	\underset{\velocityOptDummyRed\in\RR{\nstateRed}}{\arg\min}\norm{
	\resGalArgs{\podstate\velocityOptDummyRed}{\timeVar}{\stateInit + \podstate\stateRed}{\params}
}_2,
\end{equation}
where
\begin{equation}
	\resGalArgs{\velocityOptDummy}{\timeDummy}{\stateDummy}{\paramsDummy}\defeq
\velocityOptDummy - 
	\velocityArgs{\stateDummy}{\timeDummy}{\paramsDummy}
\end{equation}
denotes the FOM ODE residual.
\begin{remark}[Galerkin ROM ODE: finite-volume
	interpretation]\label{rem:GalODE}
	From the time-continuous optimality of the Galerkin ROM ODE
	\eqref{eq:galOptimality2} and the finite-volume interpretation of the FOM ODE
	in Remark \ref{rem:fomODE}, 
	the Galerkin ROM ODE \eqref{eq:galODE} can be interpreted as minimizing the sum of
	squared
(normalized) \emph{rates of violation of conservation} across
all variables
$\conservedQuantity{i}$, $i\innat{\nConservation}$ and control volumes
$\controlVolArg{j}$, $j\innat{\nControlVol}$ at time instance
$\timeVar$ under one approximation: the flux and source terms are
approximated using the finite-volume discretization (i.e.,
$\fluxVecArg{i}\leftarrow\fluxApproxVecArg{i}$, and 
$\sourceEntry{i}\leftarrow\sourceApproxEntry{i}$).
\end{remark}
\subsection{LSPG projection}\label{sec:lspg}
In contrast, LSPG projection associates with a minimum-residual formulation applied to the (time-discrete) O$\Delta$E \eqref{eq:resLinMultiSolve}, i.e.,
\begin{equation} \label{eq:LSPGODeltaE}
	\stateApproxArg{n}= 
	\underset{\stateOptDummy\in\stateInit(\params) +
	\range{\podstate}}{\arg\min}\norm{\weightingMatrixArgs{\stateOptDummy}{\params}\resArgs{n}{\stateOptDummy}{\params}}_2
\end{equation} 
or equivalently
\begin{equation} \label{eq:LSPGODeltaEred}
	\stateRedTime{n}= 
	\underset{\stateRedOptDummy\in\RR{\nstateRed}}{\arg\min}\norm{\weightingMatrixArgs{\stateInit(\params) +
	\podstate \stateRedOptDummy}{\params}\resArgs{n}{\stateInit(\params) +
\podstate \stateRedOptDummy}{\params}}_2.
\end{equation} 
The necessary optimality conditions for problem \eqref{eq:LSPGODeltaEred}
associate with stationarity of the objective function, i.e., the solution
$\stateRedTime{n}$ satisfies
 \begin{equation} \label{eq:LSPGODeltaEStationarity}
\testBasisArgs{n}{\stateRedTime{n}}{\params}^T\weightingMatrixT\weightingMatrix\resArgs{n}{\stateInit(\params) +
\podstate \stateRedTime{n}}{\params} = \zero,
 \end{equation} 
 where the LSPG test basis
 $\testBasis^n:\RR{\nstateRed}\times\paramDomain\rightarrow\RR{\nstate\times\nstateRed}$ is 
 \begin{align}\label{eq:testBasisDef}
\begin{split}
	\testBasisArgs{n}{\unknownRed}{\paramsDummy}&\defeq
\frac{\partial\resTime{n}}{\partial\unknown}
(\stateInit(\params) +\podstate\unknownRed; \params)\podstate\\
& =\left(\alpha_0\identity +
\beta_0\dt\frac{\partial\velocity}{\partial\reviewerB{\stateDummy}}
(\stateInit(\params) +\podstate\unknownRed,\timeArg{n}; \params)\right)\podstate.
\end{split}
\end{align}
Eq.~\eqref{eq:testBasisDef} reveals that LSPG projection adds the term $
\frac{\beta_0\dt}{\alpha_0}\frac{\partial\velocity}{\partial\reviewerB{\stateDummy}}
(\stateInit(\params) +\podstate\unknownRed,\timeArg{n}; \params)\podstate
$ to the test
basis employed by Galerkin projection.

\begin{remark}[LSPG ROM O$\Delta$E: finite-volume interpretation for Adams
	methods]\label{rem:lspgODeltaE}
From the time-discrete optimality of the LSPG ROM O$\Delta$E
\eqref{eq:LSPGODeltaEred} and the finite-volume interpretation of the FOM
O$\Delta$E for Adams methods in Remark \ref{rem:fomODeltaE}, the LSPG ROM
O$\Delta$E \eqref{eq:LSPGODeltaEred} can be interpreted as minimizing the 
sum of squared
(normalized) \emph{violation of conservation} across all variables
$\conservedQuantity{i}$, $i\innat{\nConservation}$ and control volumes
$\controlVolArg{j}$, $j\innat{\nConservation}$ over time
interval $[\timeArg{n-1},\timeArg{n}]$ under two approximations: (1) the flux and source terms are
approximated using the finite-volume discretization (i.e.,
$\fluxVecArg{i}\leftarrow\fluxApproxVecArg{i}$, and 
$\sourceEntry{i}\leftarrow\sourceApproxEntry{i}$), and (2) a polynomial
interpolation is used to approximate the integrand for time integration.
\end{remark}

\subsection{Hyper-reduction}\label{sec:hyperreduction}
In the case of nonlinear dynamical systems, projection is insufficient to
yield computational savings, as high-dimensional nonlinear quantities
$\resGal$ and $\resTime{n}$ must be repeatedly computed, projected as
$\podstate^T\resGal$ and $\testBasisShortnT\resTime{n}$, and differentiated
(in the case of implicit time integrators) for Galerkin and LSPG ROMs,
respectively. To reduce this computational bottleneck, 
several `hyper-reduction' techniques have been developed that require computing only a
sample of the elements of these nonlinear vector-valued functions.  These
techniques include collocation
\cite{astrid2007mpe,ryckelynck2005phm,LeGresleyThesis}, gappy POD
\cite{sirovichOrigGappy,bos2004als,astrid2007mpe,CarlbergGappy,carlbergJCP},
the empirical interpolation method (EIM)
\cite{barrault2004eim,chaturantabut2010journal,galbally2009non,drohmannEOI,
HAntil_MHeinkenschloss_DCSorensen_2013a}, reduced-order quadrature
\cite{HAntil_SField_RHNochetto_MTiglio_2013}, finite-element subassembly
methods \cite{an2008optimizing,farhat2014dimensional}, and
reduced-basis-sparsification techniques \cite{carlberg2012spd}.

In the present context, hyper-reduction can be achieved by replacing the
residuals appearing in the  objective functions of
\eqref{eq:galOptimality2} and
\eqref{eq:LSPGODeltaEred} by
$\resGalHyper(\approx\resGal)$ and $\resHyperTime{n}(\approx\resTime{n})$,
respectively, such that the hyper-reduced optimization problems for Galerkin
and LSPG projection become
\begin{equation}\label{eq:galOptimality2Hyper}
	\frac{d\stateRed}{d\timeVar}\left(\stateInit(\params) +
	\podstate\stateRed,\timeVar;\params\right) =
	\underset{\velocityOptDummyRed\in\RR{\nstateRed}}{\arg\min}\norm{
	\resGalHyperArgs{\podstate\velocityOptDummyRed}{\timeVar}{\stateInit + \podstate\stateRed}{\params}
}_2,
\end{equation}
and 
\begin{equation} \label{eq:LSPGODeltaEredHyper}
	\stateRedTime{n}(\params)= 
	\underset{\stateRedOptDummy\in\RR{\nstateRed}}{\arg\min}\norm{\weightingMatrixArgs{\stateInit(\params) +
	\podstate \stateRedOptDummy}{\params}\resHyperArgs{n}{\stateInit(\params) +
\podstate \stateRedOptDummy}{\params}}_2,
\end{equation} 
respectively. These residual approximations are typically constructed in one
of two
ways. Later, Section \ref{sec:hyperreduction2} proposes a third
technique tailored to finite-volume discretizations.
\begin{enumerate}[leftmargin=*]
	\item \label{hyper:res}\textbf{Residual hyper-reduction}. This approach amounts to 
	\begin{equation}
		\resGalHyper = \podres\pinv{\sampleMatres\podres}\sampleMatres\resGal,\quad
		\resHyperTime{n} = \podres\pinv{\sampleMatres\podres}\sampleMatres\resTime{n}
	\end{equation}
	in the case of gappy POD hyper-reduction, or
	simply
	\begin{equation}
		\resGalHyper = \sampleMatres^T\sampleMatres\resGal,\quad
		\resHyperTime{n} = \sampleMatres^T\sampleMatres\resTime{n}
	\end{equation}
	in the case of collocation. Here, $\sampleMatres\in\{0,1\}^{\nsamplesres\times
	\ndof}$ denotes a sampling matrix comprising selected rows of the
	$\ndof\times\ndof$ identity matrix, while
	$\podres\in\RRstar{\nstate\times\nresRed}$ denotes a
	$\nresRed(\leq\nstate)$-dimensional reduced-basis
	matrix constructed for the residual, a superscript $+$ denotes the
	Moore--Penrose pseudoinverse, and $\RRstar{m\times n}$ denotes the set of
	full-column rank $m\times n$ matrices (the non-compact Stiefel manifold). This approach has the
	advantage of associating hyper-reduced optimization problems \eqref{eq:galOptimality2Hyper} and \eqref{eq:LSPGODeltaEredHyper} with a
	weighted-norm variant of the original optimization problems \eqref{eq:galOptimality2}
	and\eqref{eq:LSPGODeltaEred}, i.e.,
\begin{equation} \label{eq:LSPGODeltaEredWeight}
	\frac{d\stateRed}{d\timeVar}\left(\stateInit(\params) +
	\podstate\stateRed\right) =
	\underset{\velocityOptDummyRed\in\RR{\nstateRed}}{\arg\min}\norm{
	\weightingMatPrint\resGalArgs{\podstate\velocityOptDummyRed}{\timeVar}{\stateInit + \podstate\stateRed}{\params}
}_2,
	\quad\stateRedTime{n}= 
	\underset{\stateRedOptDummy\in\RR{\nstateRed}}{\arg\min}\norm{\weightingMatPrint\resArgs{n}{\stateInit(\params) +
	\podstate \stateRedOptDummy}{\params}}_2.
\end{equation} 
	where $\weightingMatPrint = \pinv{\sampleMatres\podres}\sampleMatres$ and
	$\weightingMatPrint = \sampleMatres$ in the case of gappy POD and collocation,
	respectively. 
\item \label{hyper:velocity}\textbf{Velocity hyper-reduction}. This approach
	employs an approximated residual constructed from hyper-reduction performed
	on the velocity vector only, i.e.,
	\begin{align}\label{eq:resHyperCont}
		&\resGalHyperArgs{\velocityOptDummy}{\timeDummy}{\stateDummy}{\paramsDummy}
		= 
		\velocityOptDummy	 -
		\velocityHyperArgs{\stateDummy}{\timeDummy}{\paramsDummy}\\
		\label{eq:resHyperDisc}&\resHyperArgs{\timestepit}{\unknown}{\paramsDummy} = 
		\alpha_0 \unknown -
	 \dt \beta_0 \velocityHyperArgs{\unknown}{t^\timestepit}{\paramsDummy}+
 \sum_{j=1}^k\alpha_j\state^{\timestepit -j}(\paramsDummy) -
 \dt \sum_{j=1}^k\beta_j \velocityHyperArgs{\state^{\timestepit -j}}{t^{\timestepit
 -j}}{\paramsDummy}\reviewerB{,}
	\end{align}
	where
	\begin{equation}
		\velocityHyper =
		\podvelocity\pinv{\sampleMatvelocity\podvelocity}\sampleMatvelocity\velocity\quad\text{or}\quad
\velocityHyper =\sampleMatvelocity^T\sampleMatvelocity\velocity
	\end{equation}
	in the case of gappy POD or collocation,
	respectively.
Here, $\sampleMatvelocity\in\{0,1\}^{\nsamplesvelocity\times
	\ndof}$ denotes a sampling matrix comprising selected rows of the
	identity matrix, while
	$\podvelocity\in\RRstar{\nstate\times\nvelocityRed}$ denotes a
	$\nvelocityRed(\leq\nstate)$-dimensional reduced-basis
	matrix constructed for the velocity. This approach has the advantage of
	limiting the hyper-reduction approximation to the nonlinear component of the
	residual. 
 \end{enumerate}
	We note that the gappy POD approximations are equivalent to empirical
	interpolation when the number of samples is equal to the number of
	reduced-basis elements (i.e., $\nsamplesres = \nresRed$, $\nsamplesvelocity
	= \nvelocityRed$), as the
	pseudo-inverse is equal to the inverse and the approximation
	interpolates the nonlinear function at the sampled elements in this case. 
	Further,
	the POD--(D)EIM method \cite{chaturantabut2010journal} corresponds to Galerkin projection with gappy POD velocity
	hyper-reduction and $\nsamplesvelocity = \nvelocityRed$, in which case the hyper-reduced Galerkin ODE becomes
\begin{equation} \label{eq:galODEhyper}
	\frac{d\stateRed}{d\timeVar} =
	\podstate^T\podvelocity(\sampleMatvelocity\podvelocity)^{-1}\sampleMatvelocity\velocityArgs{\stateInit(\params) +
	\podstate\stateRed}{\timeVar}{\params}, \quad \stateRed(0) = \zero.
\end{equation} 
In addition, the GNAT method \cite{CarlbergGappy,carlbergJCP} corresponds to
LSPG projection with gappy POD residual hyper-reduction. In principle, the two
projection techniques and two hyper-reduction approaches above yield four
possible (hyper-reduced) reduced-order models that could be constructed.

\subsection{Lack of conservation}\label{sec:ROMconserv}

Remarks \ref{rem:GalODE} and \ref{rem:lspgODeltaE} demonstrated that Galerkin
and LSPG ROMs minimize the violation of conservation in the case of
finite-volume models in particular senses; Galerkin performs this minimization
at the time-continuous level, while LSPG does so at the time-discrete level.
While this is an attractive property, it does not guarantee that the model is
conservative in any sense: because the minimum value of the objective
functions in Eqs.~\eqref{eq:galOptimality} and \eqref{eq:LSPGODeltaE} may be
non-zero, conservation is generally violated by each of these approaches. We
interpret this as violating the structure intrinsic to
finite-volume models. This provides the motivation for this work: we aim to
develop reduced-order models that ensure the resulting model is conservative
globally and---more generally---over subdomains.

\section{Proposed method}\label{sec:proposed}
\reviewerB{This section describes the proposed method, which equips the
	optimization problems characterizing the online ROM solution with equality constraints
	that explicitly enforce conservation over subdomains. The approach requires
	no modification to the offline stage except when hyper-reduction is applied
	to the nonlinear terms appearing in the constraints.  Section \ref{sec:DD}
	introduces the concept of conservation over subdomains, Section
	\ref{sec:consGal} introduces conservative Galerkin projection, Section
	\ref{sec:consLSPG} describes conservative LSPG projection, Section
	\ref{sec:infeasible} described approaches for handling infeasibility, and
	Section \ref{sec:hyperreduction2} describes hyper-reduction techniques
applicable to objective function and constraint, and Section
\ref{sec:training} describes snapshot-based (offline) training procedures that
may be used for generating the reduced-basis matrices required by the method.}
\subsection{Domain decomposition}\label{sec:DD}

To begin, we decompose the mesh $\mesh$ into subdomains, each of which
comprises the union of control volumes. That is, we define a
decomposed mesh $\meshDecomp$ of $\nSubdomains(\leq\nControlVol)$
subdomains 
$\subdomainArg{i}= \cup_{j\in\controlVolSet\subseteq \nat{\nControlVol}}
\controlVolArg{j}$,
$i\innat{\nSubdomains}$ 
with $\meshDecomp \defeq \{\subdomainArg{i}\}_{i=1}^{\nSubdomains}$. We note
that the subdomains need not be non-overlapping, closed, or connected. Denoting
the boundary of the
$i$th subdomain by $\subdomainInterfaceArg{i}\defeq\partial\subdomainArg{i}$,
we have
$\subdomainInterfaceArg{i} = \{\spaceVar\,
|\, \spaceVar\in \face,\ \forall \face\in\subdomainFaceSetArg{i},\
i\innat{|\subdomainFaceSetArg{i}|}\}
\subseteq\cup_{j=1}^\nControlVol \InterfaceArg{j}$,
$i\innat\nSubdomains$
 with 
$\subdomainFaceSetArg{i}\subseteq\faceSet$ representing the set of faces
belonging to the $i$th subdomain.
We denote the full set of faces within the
decomposed mesh by $\subdomainFaceSet\defeq
\cup_{i=1}^{\nSubdomains}\subdomainFaceSetArg{i}\subseteq\faceSet$. 
Figure \ref{fig:decompmeshes} depicts several decompositions that satisfy the above
conditions.
We emphasize that the subdomains can overlap, their
		union need not correspond to the global domain, and the global domain
		can be considered by employing $\meshDecomp = \meshDecompGlobal$, which is
		characterized by 
	$\nSubdomains=1$ subdomain that corresponds to the global domain, i.e.,
$\subdomainArg{1}=\domain$ and $\subdomainInterfaceArg{1} =
\Interface$, as depicted in Figure \ref{fig:mesh3}.

\begin{figure}[htbp] 
      \begin{center}
				\begin{subfigure}{0.32\textwidth}\includegraphics[width=\textwidth]{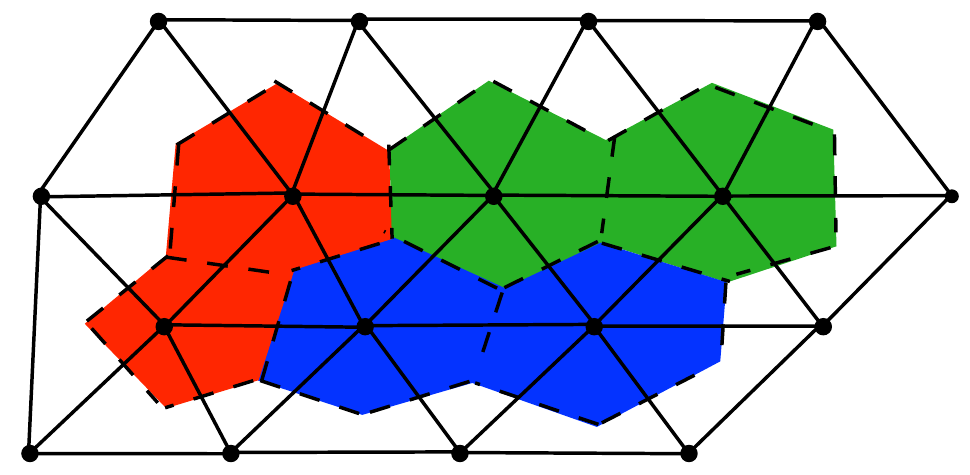}\caption{\centering Decomposed
						mesh $\meshDecomp$ with
				$\nSubdomains=3$}\label{fig:mesh1}\end{subfigure}
				\begin{subfigure}{0.32\textwidth}\includegraphics[width=\textwidth]{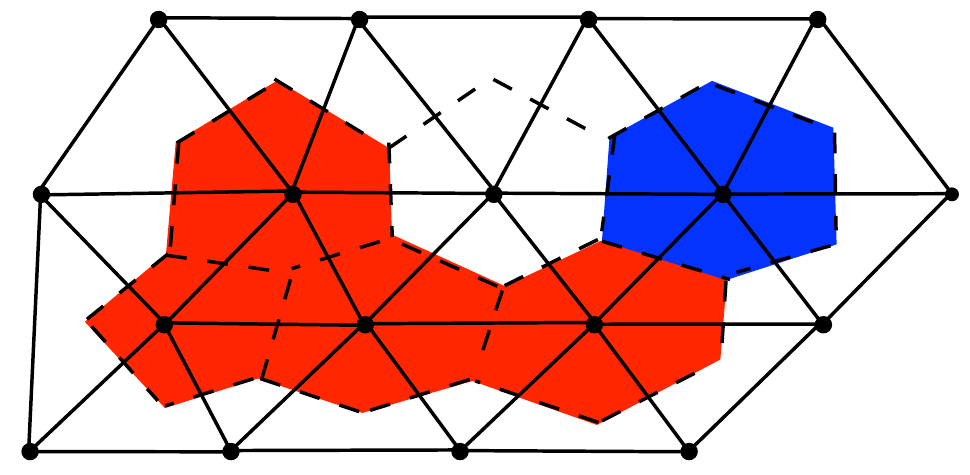}\caption{\centering Decomposed
						mesh $\meshDecomp$ with
				$\nSubdomains=2$}\label{fig:mesh2}\end{subfigure}
				\begin{subfigure}{0.32\textwidth}\includegraphics[width=\textwidth]{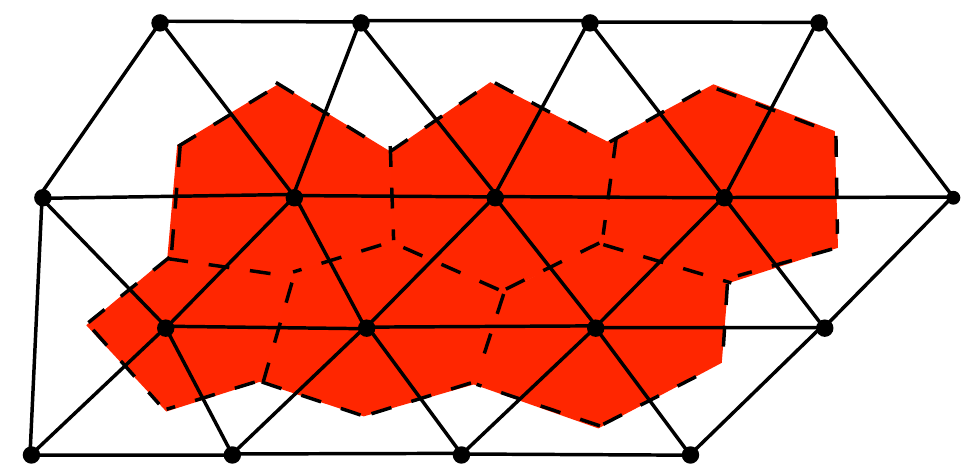}\caption{\centering Decomposed
						mesh $\meshDecompGlobal$ with
				$\nSubdomains=1$,
		$\subdomainArg{1}=\domain$, and $\subdomainInterfaceArg{1} =
\Interface$}\label{fig:mesh3}\end{subfigure}
      \end{center}
			\caption{Examples of decomposed meshes $\meshDecomp$ for a
				vertex-centered finite-volume model. Solid lines denote the primal mesh, and dashed lines the
				control-volume interfaces $\InterfaceArg{j}$ defining the dual mesh, and colors denote separate subdomains
			$\subdomainArg{i}$. }
	\label{fig:decompmeshes}
\end{figure}

Enforcing conservation
\eqref{eq:PDE} on each subdomain in the decomposed mesh yields
\begin{equation}\label{eq:PDEaftermeshDecomp}
\frac{d}{dt}\int_{\subdomainArg{j}}{\conservedQuantity{i}}(\spaceVar,\timeVar
;\params)\dVol +
\int_{\subdomainInterfaceArg{j}}\fluxVecArg{i}(\spaceVar,\timeVar
;\params)\cdot\normalVecDecompArg{j}(\spaceVar)\dInterface =
\int_{\subdomainArg{j}}{\sourceEntry{i}}(\spaceVar,\timeVar
;\params)\dVol,\quad i\innat\nConservation,\ j\innat\nSubdomains,
\end{equation}
where $\normalVecDecompArg{j}:\InterfaceArg{j}\rightarrow\RR{\nspaceDim}$
denotes the unit normal to subdomain $\subdomainArg{j}$\reviewerB{.}
We propose applying a finite-volume discretization to
Eq.~\eqref{eq:PDEaftermeshDecomp} that operates on the decomposed mesh
$\meshDecomp$. That is, we introduce a `decomposed' state vector
$\stateDecomp:\RR{\ndof}\times \timeDomain\times \paramDomain\rightarrow
 \RR{\ndofDecomp}$ with
 $\ndofDecomp = \nSubdomains\nConservation$ and
 elements
 \begin{equation}\label{eq:stateDecompDefinition}
\stateDecompEntry{\consMapDecomp{i}{j}}(\state,t;\params) =
\frac{1}{\subdomainSize{j}}\int_{\subdomainArg{j}}{\conservedQuantity{i}}(\spaceVar,\timeVar
;\params)\dVol,\quad i\innat\nConservation,\ j\innat\nSubdomains,
\end{equation}
where $\consMapDecompNo:\nat{\nConservation}\times
\nat{\nSubdomains}\rightarrow\nat{\ndofDecomp}$ denotes a mapping from
conservation-law index and subdomain index to decomposed degree of freedom.
The decomposed state vector can be computed from the state vector $\state$ as
\begin{equation}
	\stateDecompEntry{\consMapDecomp{i}{j}}(\state,t;\params)
	=\frac{1}{\subdomainSize{j}}\sum_{k\,|\,
		\controlVolArg{k}\subseteq\subdomainArg{j}}
\controlVolSize{k}
\stateEntry{\consMap{i}{k}}(t;\params)
\end{equation}
or equivalently
\begin{equation}
	\stateDecomp(\state) = \meshMapping\state,
\end{equation}
where
$\meshMapping\in\RRplus{\ndofDecomp\times\ndof}$
has elements $\meshMappingEntry{\consMapDecomp{i}{j}}{\consMap{\ell}{k}} =
\controlVolSize{k}/\subdomainSize{j}\kronecker{i}{\ell}\indicator{\controlVolArg{k}\subseteq\subdomainArg{j}}$,
where $\indicatorSymb$ is the indicator function, which evaluates to one if its
argument is true, and zero if its argument is false.
We note that this matrix can be decomposed as
$\meshMapping=\volumesDecomp^{-1}\aggregation\volumes$,
where the elements of
the volumetic matrices $\volumes\in\RR{\ndof\times\ndof}$,
$\volumesDecomp\in\RR{\ndofDecomp\times\ndofDecomp}$ 
and aggregation matrix $\aggregation\in\{0,1\}^{\ndofDecomp\times\ndof}$
comprise
\begin{align} 
	\volumesEntry{\consMap{i}{j}}{\consMap{\ell}{k}}=\kronecker{i}{\ell}\kronecker{j}{k}\controlVolArg{k},\quad
\volumesDecompEntry{\consMapDecomp{i}{j}}{\consMapDecomp{\ell}{k}}=
\kronecker{i}{\ell}\kronecker{j}{k}\subdomainArg{k},\quad\aggregationEntry{\consMapDecomp{i}{j}}{\consMap{\ell}{k}} =
\kronecker{i}{\ell}\indicator{\controlVolArg{k}\subseteq\subdomainArg{j}}.
\end{align} 
Similarly, we write the velocity
vector
$\velocityDecompSymbol:(\stateDummy,\timeDummy;\paramsDummy)\mapsto\velocityDecompFlux(\stateDummy,\timeDummy;\paramsDummy)
+ \velocityDecompSource(\stateDummy,\timeDummy;\paramsDummy)$
with $\velocityDecompFlux,\velocityDecompSource:\RR{\ndof}\times\timeDomain\times \paramDomain\rightarrow
 \RR{\ndofDecomp}$ whose elements consist of
\begin{align}\label{eq:velocityDecompDefinition}
	\velocityDecompFluxEntryArgs{\consMapDecomp{i}{j}}{\state}{t}{\params}&=
-\frac{1}{\subdomainSize{j}}\int_{\subdomainInterfaceArg{j}}\fluxApproxVecArg{i}(\state;\spaceVar,\timeVar
;\params)\cdot\normalVecDecompArg{j}(\spaceVar)\dInterface \\
\velocityDecompSourceEntryArgs{\consMapDecomp{i}{j}}{\state}{t}{\params}&=
\frac{1}{\subdomainSize{j}}\int_{\subdomainArg{j}}{\sourceApproxEntry{i}}(\state;\spaceVar,\timeVar
;\params)\dVol,
\end{align}
for $i\innat\nConservation,\ j\innat\nSubdomains$, which can be computed from
the underlying finite-volume model as
\begin{align}
	\velocityDecompSourceVecArgs{\state}{\timeVar}{\params} =
	\meshMapping\velocitySourceVecArgs{\state}{\timeVar}{\params},\quad
	\velocityDecompFluxVecArgs{\state}{\timeVar}{\params} = \alignmentDecomp\faceFluxVecArgs{\state}{t}{\params}
\end{align}
where the elements of $\alignmentDecomp\in\RR{\ndofDecomp\times
\ndofFace}$ are 
\begin{align} 
	\alignmentDecompEntry{\consMapDecomp{i}{j}}{\faceMap{\ell}{k}}=\begin{cases}
		-\kronecker{i}{\ell}/\subdomainSize{j},\quad &\faceArg{k}\in\subdomainInterfaceArg{j};\
		\normalVecDecompArg{j}(\spaceVar) =
		\normalVecFaceArg{k}(\spaceVar),\ 
		\spaceVar\in\faceArg{k}\\
		\kronecker{i}{\ell}/\subdomainSize{j},\quad &\faceArg{k}\in\subdomainInterfaceArg{j};\
		\normalVecDecompArg{j}(\spaceVar) =
		-\normalVecFaceArg{k}(\spaceVar),\ 
		\spaceVar\in\faceArg{k}\\
		0,\quad &\text{otherwise.}
	\end{cases}
\end{align} 
Critically, noting that $\alignmentDecomp = \meshMapping\alignment$ due to the fact that
neighboring control volumes have outward unit normals of opposite sign along a
shared face, we have
\begin{align}
	\velocityDecompFluxVecArgs{\state}{\timeVar}{\params} &= 
	\meshMapping\velocityFluxVecArgs{\state}{\timeVar}{\params}
\end{align}
such that
\begin{equation}
\velocityDecompSymbol(\state,t;\params)=\velocityDecomp(\state,t;\params).
\end{equation}
Thus, conservation on the decomposed mesh $\meshDecomp$ given an underlying
finite-volume discretization on mesh $\mesh$ can be expressed as
\begin{equation} \label{eq:decompFOM} 
	\meshMapping\frac{d\state}{dt} = \velocityDecomp(\state,t;\params)
\end{equation} 
or equivalently
\begin{equation} \label{eq:decompFOMRes} 
	\meshMapping
	\resGalArgs{\frac{d\state}{dt}}{\timeVar}{\state}{\params} = \zero.
\end{equation} 
Applying a linear multistep scheme to discretize \eqref{eq:decompFOM} in time
yields
 \begin{equation} \label{eq:resLinMultiSolveDecomp}
	 \resDecompArgs{\timestepit}{\state^\timestepit}{\params}=\zero.
 \end{equation}
Note that the decomposed ODE \eqref{eq:decompFOM} and decomposed O$\Delta$E \eqref{eq:resLinMultiSolveDecomp} are underdetermined,
as they comprise $\ndofDecomp$ equations in $\ndof(\geq\ndofDecomp)$ unknowns.

We now demonstrate that conservation that is enforced over a decomposed mesh
automatically leads to conservation over a coarser mesh that embeds the
decomposed mesh.
\begin{theorem}[Conservation over coarser
	decompositions]\label{thm:conservationCoarse}
Define a decomposed mesh $\meshDecompDecomp$
as a decomposition of
 a non-overlapping decomposed mesh $\meshDecomp$ 
 satisfying $\measure{\subdomainArg{i}\cap\subdomainArg{j}} = 0$ for $i\neq j$
such that 
$\subdomainDecompArg{i} =
\cup_{j\in\subdomainSet\subseteq\nat{\nSubdomains}}\subdomainArg{j} $,
$i\innat{\nSubdomainsDecomp}$,
 with $\meshDecompDecomp \defeq
\{\subdomainDecompArg{i}\}_{i=1}^{\nSubdomainsDecomp}$
 and
$\nSubdomainsDecomp\leq\nSubdomains(\leq\nControlVol)$.
Then, satisfaction of time-continuous conservation on $\meshDecomp$ (i.e.,
Eq.~\eqref{eq:decompFOM}) implies
satisfaction of  time-continuous conservation on $\meshDecompDecomp$, i.e.,
\begin{equation} \label{eq:decompDecompFOM} 
	\meshMappingDecomp\frac{d\state}{dt} = \velocityDecompDecomp(\state,t;\params)
\end{equation} 
and 
satisfaction of time-discrete conservation on $\meshDecomp$ 
(i.e., Eq.~\eqref{eq:resLinMultiSolveDecomp}) implies satisfaction of
time-discrete conservation on $\meshDecompDecomp$, i.e.,
 \begin{equation} \label{eq:resLinMultiSolveDecompDecomp}
	 \resDecompDecompArgs{\timestepit}{\state^\timestepit}{\params}=0,
 \end{equation}
where
$\meshMappingDecomp\defeq\volumesDecompDecomp^{-1}\aggregationDecomp\volumes\in\RRplus{\ndofDecompDecomp\times\ndof}$,
the elements of
$\volumesDecompDecomp\in\RR{\ndofDecompDecomp\times\ndofDecompDecomp}$ are
$\volumesDecompDecompEntry{\consMapDecompDecomp{i}{j}}{\consMapDecompDecomp{\ell}{k}}=
\kronecker{i}{\ell}\kronecker{j}{k}\subdomainDecompArg{k}$, and the elements of 
$\aggregationDecomp$ are
$\aggregationDecompEntry{\consMapDecompDecomp{i}{j}}{\consMap{\ell}{k}} =
\kronecker{i}{\ell}\indicator{\controlVolArg{k}\subseteq\subdomainDecompArg{j}}$.
\end{theorem}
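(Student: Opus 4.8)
The plan is to reduce both claims to a single linear-algebraic fact: the coarser mesh map factors through the finer one as
\[
  \meshMappingDecomp \;=\; \volumesDecompDecomp^{-1}\,\aggregationDecompCoarseToCoarser\,\volumesDecomp\,\meshMapping ,
\]
where $\volumesDecompDecomp$ is the (positive, diagonal, hence invertible) matrix of subdomain volumes of $\meshDecompDecomp$ and $\aggregationDecompCoarseToCoarser\in\{0,1\}^{\ndofDecompDecomp\times\ndofDecomp}$ is the coarse-to-coarser aggregation matrix with entries $\aggregationDecompCoarseToCoarserEntry{\consMapDecompDecomp{i}{j}}{\consMapDecomp{\ell}{k}}=\kronecker{i}{\ell}\indicator{\subdomainArg{k}\subseteq\subdomainDecompArg{j}}$; equivalently, every row of $\meshMappingDecomp$ lies in $\range{\meshMapping^T}$. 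Granting this, the time-continuous assertion follows by left-multiplying Eq.~\eqref{eq:decompFOMRes}, i.e.\ $\meshMapping\bigl(\tfrac{d\state}{dt}-\velocityArgs{\state}{t}{\params}\bigr)=\zero$, by $\volumesDecompDecomp^{-1}\aggregationDecompCoarseToCoarser\volumesDecomp$ to get $\meshMappingDecomp\bigl(\tfrac{d\state}{dt}-\velocityArgs{\state}{t}{\params}\bigr)=\zero$, which is precisely Eq.~\eqref{eq:decompDecompFOM} since $\velocityDecompDecomp=\meshMappingDecomp\velocity$. The time-discrete assertion follows the same way: by construction $\resDecompTime{n}=\meshMapping\resTime{n}$ and $\resDecompDecompTime{n}=\meshMappingDecomp\resTime{n}$, so left-multiplying $\resDecompArgs{n}{\state^n}{\params}=\zero$ by the same matrix yields $\resDecompDecompArgs{n}{\state^n}{\params}=\zero$.

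It remains to establish the factorization. Since $\meshMapping=\volumesDecomp^{-1}\aggregation\volumes$ and $\meshMappingDecomp=\volumesDecompDecomp^{-1}\aggregationDecomp\volumes$, cancelling $\volumesDecomp^{-1}\volumesDecomp=\identity$ reduces it to the purely combinatorial identity $\aggregationDecomp=\aggregationDecompCoarseToCoarser\aggregation$, which I would check column by column. Fix a control volume $\controlVolArg{k}$. Because $\meshDecomp$ is non-overlapping and each of its subdomains is an exact union of control volumes, $\controlVolArg{k}$ lies in at most one subdomain $\subdomainArg{m}$ of $\meshDecomp$; hence the column of $\aggregation$ indexed by $\consMapDecomp{\ell}{k}$ is either $\zero$---in which case $\controlVolArg{k}$ lies in no $\subdomainDecompArg{j}=\cup_m\subdomainArg{m}$ either, so the corresponding column of $\aggregationDecomp$ also vanishes---or the standard basis vector indexed by $\consMapDecomp{\ell}{m}$. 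In the latter case the $(\consMapDecompDecomp{i}{j})$-entry of the corresponding column of $\aggregationDecompCoarseToCoarser\aggregation$ is $\kronecker{i}{\ell}\indicator{\subdomainArg{m}\subseteq\subdomainDecompArg{j}}$ and that of $\aggregationDecomp$ is $\kronecker{i}{\ell}\indicator{\controlVolArg{k}\subseteq\subdomainDecompArg{j}}$, so it suffices to argue $\subdomainArg{m}\subseteq\subdomainDecompArg{j}\iff\controlVolArg{k}\subseteq\subdomainDecompArg{j}$. The forward implication is immediate from $\controlVolArg{k}\subseteq\subdomainArg{m}$; for the converse, $\controlVolArg{k}\subseteq\subdomainDecompArg{j}=\cup_{p\in\subdomainSet}\subdomainArg{p}$ together with $\measure{\controlVolArg{k}}>0$ and the fact that each $\subdomainArg{p}$ is a union of control volumes forces $\controlVolArg{k}\subseteq\subdomainArg{p}$ for some $p\in\subdomainSet$, whence $p=m$ by uniqueness and $\subdomainArg{m}\subseteq\subdomainDecompArg{j}$.

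The only step that is not routine bookkeeping is this last indicator equivalence, where the nesting of the three meshes is used crucially---each subdomain of $\meshDecomp$ being an exact union of control volumes and each subdomain of $\meshDecompDecomp$ an exact union of subdomains of $\meshDecomp$: a positive-measure control volume cannot be split across the measure-zero-overlap subdomains of $\meshDecomp$, so its containment in a coarse subdomain is witnessed by a single finer subdomain. Everything else---the cancellation of volume matrices and the left-multiplications---is mechanical, and as a consistency check one verifies that the explicit formula $\meshMappingDecomp=\volumesDecompDecomp^{-1}\aggregationDecomp\volumes$ recorded in the theorem statement coincides with the factored form. I anticipate no further difficulty.
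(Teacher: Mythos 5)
Your proposal is correct and follows essentially the same route as the paper: establishing the two-stage aggregation identity $\aggregationDecomp=\aggregationDecompCoarseToCoarser\aggregation$, hence the factorization $\meshMappingDecomp=\volumesDecompDecomp^{-1}\aggregationDecompCoarseToCoarser\volumesDecomp\meshMapping$, and then left-multiplying the conservation relations on $\meshDecomp$ by $\volumesDecompDecomp^{-1}\aggregationDecompCoarseToCoarser\volumesDecomp$. In fact your column-by-column, positive-measure justification of the indicator identity spells out the combinatorial step more carefully than the paper's proof does (which states it directly), so no gaps remain.
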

\begin{proof}
The conditions 
$\subdomainDecompArg{i} =
\cup_{j\in\subdomainSet\subseteq\nat{\nSubdomains}}\subdomainArg{j} $,
$i\innat{\nSubdomainsDecomp}$;
$\subdomainArg{i}= \cup_{j\in\controlVolSet\subseteq \nat{\nControlVol}}
\controlVolArg{j}$,
$i\innat{\nSubdomains}$; and
$\measure{\subdomainArg{i}\cap\subdomainArg{j}} = 0$ for $i\neq j$
 imply
that
the aggregation operator 
characterizing the mesh $\meshDecompDecomp$ can be
applied in two stages, i.e.,
\begin{equation*}
	\aggregationDecompEntry{\consMapDecompDecomp{i}{j}}{\consMap{\ell}{k}} =
\kronecker{i}{\ell}\indicator{\controlVolArg{k}\subseteq\subdomainDecompArg{j}}
=\kronecker{i}{\ell}\sum_{m=1}^{\nSubdomains}\indicator{\subdomainArg{m}\subseteq\subdomainDecompArg{j}}\indicator{\controlVolArg{k}\subseteq\subdomainArg{m}}
=\kronecker{i}{\ell}\kronecker{\ell}{h}
\sum_{m=1}^{\nSubdomains}\sum_{\ell=1}^{\nConservation}\aggregationDecompCoarseToCoarserEntry{\consMapDecompDecomp{i}{j}}{\consMapDecomp{\ell}{m}}
\aggregationEntry{\consMapDecomp{\ell}{m}}{\consMap{h}{k}}
\end{equation*}
and thus
$\aggregationDecomp = \aggregationDecompCoarseToCoarser\aggregation$, where
the elements of $\aggregationDecompCoarseToCoarser$ are
$\aggregationDecompCoarseToCoarserEntry{\consMapDecompDecomp{i}{j}}{\consMapDecomp{\ell}{k}}=\kronecker{i}{\ell}\indicator{\subdomainArg{k}\subseteq\subdomainDecompArg{j}}$.
Substituting $\aggregationDecomp = \aggregationDecompCoarseToCoarser\aggregation$ in the definition of
$\meshMappingDecomp$ yields
\begin{equation}
\meshMappingDecomp=\volumesDecompDecomp^{-1}\aggregationDecompCoarseToCoarser\aggregation\volumes
=
\volumesDecompDecomp^{-1}\aggregationDecompCoarseToCoarser\volumesDecomp\meshMapping.
\end{equation}
Thus, Eqs.~\eqref{eq:decompDecompFOM} and \eqref{eq:resLinMultiSolveDecompDecomp}
can be rewritten as
\begin{align}
	&\volumesDecompDecomp^{-1}\aggregationDecompCoarseToCoarser\volumesDecomp\meshMapping\frac{d\state}{dt} =
	\volumesDecompDecomp^{-1}\aggregationDecompCoarseToCoarser\volumesDecomp\meshMapping\velocity(\state,t;\params)\\
	&\volumesDecompDecomp^{-1}\aggregationDecompCoarseToCoarser\volumesDecomp\meshMapping\resArgs{\timestepit}{\state^\timestepit}{\params}=0,
 \end{align}
 which are clearly satisfied if Eqs.~\eqref{eq:decompFOM} and
 \eqref{eq:resLinMultiSolveDecomp} are satisfied, respectively.
\end{proof}
\begin{corollary}[Full-order model conservation]
The full-order model satisfies time-continuous and time-discrete conservation
over any decomposed mesh.
\end{corollary}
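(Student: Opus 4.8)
The plan is to obtain the corollary as an immediate consequence of the fact that the full-order model satisfies the ODE~\eqref{eq:fom} and its time discretization~\eqref{eq:resLinMultiSolve} \emph{exactly}, by left-multiplying these identities with the aggregation-type operator $\meshMapping$ associated with an arbitrary decomposed mesh $\meshDecomp$. This avoids invoking Theorem~\ref{thm:conservationCoarse}, although the statement can also be recovered from it by taking the finer mesh to be $\mesh$ itself (for which $\meshMapping=\identity$ and the decomposed equations reduce to the FOM equations); the direct route has the advantage of covering overlapping decompositions as well.

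First I would fix an arbitrary decomposed mesh $\meshDecomp$ with operator $\meshMapping\in\RRplus{\ndofDecomp\times\ndof}$, and record the two identities already established in this section: $\stateDecomp(\state)=\meshMapping\state$ and $\velocityDecompSymbol(\state,t;\params)=\meshMapping\,\velocityArgs{\state}{t}{\params}$ (the latter being the content of $\velocityDecompSymbol=\velocityDecomp$ derived just before Eq.~\eqref{eq:decompFOM}). For the time-continuous claim, the full-order model satisfies $\frac{d\state}{dt}=\velocityArgs{\state}{\timeVar}{\params}$, i.e.\ $\resGalArgs{\frac{d\state}{dt}}{\timeVar}{\state}{\params}=\zero$; left-multiplying by $\meshMapping$ and using the second identity gives $\meshMapping\frac{d\state}{dt}=\meshMapping\,\velocityArgs{\state}{\timeVar}{\params}=\velocityDecompSymbol(\state,t;\params)$, which is precisely Eq.~\eqref{eq:decompFOM} (equivalently \eqref{eq:decompFOMRes}). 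Hence the FOM is time-continuously conservative over $\meshDecomp$.

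Next I would treat the time-discrete claim. The fully discrete FOM satisfies $\resArgs{n}{\state^n}{\params}=0$ by~\eqref{eq:resLinMultiSolve}. Since the linear multistep residual~\eqref{eq:resLinMulti} is affine in each of its state contributions $\state^{n-j}$ and velocity contributions $\velocityArgs{\state^{n-j}}{t^{n-j}}{\params}$, left-multiplication by $\meshMapping$ distributes over every term; combining this with the two identities above shows $\meshMapping\,\resArgs{n}{\state^n}{\params}=\resDecompArgs{n}{\state^n}{\params}$, which therefore equals $\zero$, so \eqref{eq:resLinMultiSolveDecomp} holds. Since $\meshDecomp$ was arbitrary, the FOM is conservative over every decomposed mesh in both the time-continuous and time-discrete senses.

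I do not anticipate any real obstacle: the only point needing a line of verification is that $\meshMapping$ commutes through the linear multistep residual, which follows from linearity together with $\meshMapping\state=\stateDecomp(\state)$ and $\meshMapping\velocity=\velocityDecompSymbol$; in particular the argument uses no non-overlapping or connectedness assumption on the subdomains, so it applies verbatim to every decomposed mesh permitted in Section~\ref{sec:DD}.
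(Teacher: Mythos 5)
Your proof is correct, but it takes a different route from the paper's. The paper obtains the corollary as a special case of Theorem \ref{thm:conservationCoarse}: it takes the finer decomposed mesh to be the original mesh $\mesh$ itself (which is non-overlapping, and for which the decomposed equations reduce to the FOM equations since $\meshMapping=\identity$ in that case), so conservation over an arbitrary decomposed mesh follows from the coarsening result. You instead argue directly: left-multiply the FOM ODE \eqref{eq:fom} and O$\Delta$E \eqref{eq:resLinMultiSolve} by $\meshMapping$ and invoke the identity $\velocityDecompSymbol=\velocityDecomp$ established in Section \ref{sec:DD} (equivalently, $\resDecompTime{n}=\meshMapping\resTime{n}$ by linearity of the multistep residual), which yields \eqref{eq:decompFOM} and \eqref{eq:resLinMultiSolveDecomp} immediately. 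Both arguments ultimately rest on the same structural fact, namely the flux-cancellation identity $\alignmentDecomp=\meshMapping\alignment$ that gives $\velocityDecomp = \meshMapping\velocity$; what yours buys is a self-contained two-line verification that does not need the theorem's hypotheses, while the paper's buys brevity by reusing machinery it has already proved. One small caveat on your framing: the theorem-based route also covers overlapping decomposed meshes, because the non-overlap assumption in Theorem \ref{thm:conservationCoarse} applies only to the finer mesh, which here is $\mesh$; so the extra generality you claim for the direct route is not actually a gain, though this does not affect the validity of your proof.
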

\begin{proof}
	This corresponds to a particular case of Theorem \ref{thm:conservationCoarse} with
	$\meshDecomp = \mesh$, as any decomposed mesh $\meshDecompDecomp$ must satisfy 
$\subdomainDecompArg{i} =
\cup_{j\in\controlVolSet\subseteq\nat{\nConservation}}\controlVolArg{j} $, $i\innat{\nSubdomainsDecomp}$ and
the original mesh is non-overlapping, i.e., $\measure{\controlVolArg{i}\cap
\controlVolArg{j}}=0$ for $i\neq j$.
\end{proof}
\begin{corollary}[Global conservation]\label{cor:global}
If the decomposed mesh $\meshDecomp$ satisfies $
\cup_{i=1}^{\nSubdomains}\completesubdomainArg{i}=\spaceDomain $
and is non-overlapping, i.e., $\measure{\subdomainArg{i}\cap\subdomainArg{j}} = 0$ for $i\neq j$, then 
satisfaction of time-continuous conservation on $\meshDecomp$ implies
satisfaction of  time-continuous (global) conservation on 
$\meshDecompGlobal \defeq
\{\domain\}$, and
satisfaction of time-discrete conservation on $\meshDecomp$ implies
satisfaction of  time-discrete (global) conservation on $\meshDecompGlobal$.
\end{corollary}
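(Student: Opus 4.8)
The plan is to obtain Corollary~\ref{cor:global} as the special case of Theorem~\ref{thm:conservationCoarse} in which the coarser decomposed mesh is the trivial one covering the whole spatial domain, i.e., $\meshDecompDecomp \defeq \meshDecompGlobal = \{\domain\}$, so that $\nSubdomainsDecomp = 1$, $\subdomainDecompArg{1} = \domain$, and the operator $\meshMappingDecomp$ associated with this coarser mesh coincides with $\meshMappingGlobal$. Under this identification, the conclusions \eqref{eq:decompDecompFOM} and \eqref{eq:resLinMultiSolveDecompDecomp} of that theorem become exactly the statements of time-continuous and time-discrete conservation on $\meshDecompGlobal$ asserted by the corollary, so all that is left to do is check that the hypotheses of Theorem~\ref{thm:conservationCoarse} are satisfied by this choice.

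Concretely, I would verify three points. First, the coarsening relation $\subdomainDecompArg{i} = \cup_{j\in\subdomainSet\subseteq\nat{\nSubdomains}}\subdomainArg{j}$ required by the theorem: taking $i = 1$ and $\subdomainSet = \nat{\nSubdomains}$, this reads $\domain = \subdomainDecompArg{1} = \cup_{j=1}^{\nSubdomains}\subdomainArg{j}$, which is precisely the corollary's hypothesis $\cup_{i=1}^{\nSubdomains}\completesubdomainArg{i} = \spaceDomain$. Second, Theorem~\ref{thm:conservationCoarse} requires $\meshDecomp$ to be non-overlapping, $\measure{\subdomainArg{i}\cap\subdomainArg{j}} = 0$ for $i\neq j$, which is the remaining hypothesis of the corollary. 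Third, the ordering $\nSubdomainsDecomp = 1 \leq \nSubdomains \leq \nControlVol$ holds trivially, $\nSubdomains \leq \nControlVol$ being part of the definition of $\meshDecomp$ and $\meshDecomp$ being a nonempty decomposition of $\mesh$. It is also worth spelling out that with $\nSubdomainsDecomp = 1$ and $\subdomainDecompArg{1} = \domain$ the matrix $\meshMappingDecomp = \volumesDecompDecomp^{-1}\aggregationDecomp\volumes$ of the theorem has $\volumesDecompDecomp = \measure{\domain}\,\identity$ (in the conservation-law index) and $\aggregationDecomp$ equal to the $\nConservation\times\ndof$ matrix with entries $\kronecker{i}{\ell}$, so that $\meshMappingDecomp = \meshMappingGlobal$ indeed returns the volume-weighted average of the state over all of $\domain$.

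With these checks in place, Theorem~\ref{thm:conservationCoarse} applied to the pair $(\meshDecomp,\meshDecompGlobal)$ immediately yields that satisfaction of \eqref{eq:decompFOM} on $\meshDecomp$ implies \eqref{eq:decompDecompFOM} with $\meshMappingDecomp = \meshMappingGlobal$ (time-continuous global conservation), and that satisfaction of \eqref{eq:resLinMultiSolveDecomp} on $\meshDecomp$ implies \eqref{eq:resLinMultiSolveDecompDecomp} with $\meshMappingDecomp = \meshMappingGlobal$ (time-discrete global conservation). There is essentially no obstacle here: the entire mathematical content resides in Theorem~\ref{thm:conservationCoarse}, whose proof factors the aggregation operator into two stages, and the only care needed is the bookkeeping confirming that the trivial mesh $\{\domain\}$ genuinely meets the definition of an admissible coarsening of $\meshDecomp$---in particular that the index subset $\subdomainSet$ may be taken to be all of $\nat{\nSubdomains}$---together with the identification of $\meshMappingDecomp$ with $\meshMappingGlobal$.
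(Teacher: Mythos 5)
Your proposal is correct and follows essentially the same route as the paper: both invoke Theorem~\ref{thm:conservationCoarse} with $\meshDecompDecomp = \meshDecompGlobal$, $\nSubdomainsDecomp = 1$, $\subdomainDecompArg{1} = \domain$, and $\subdomainSet = \nat{\nSubdomains}$, using the corollary's covering and non-overlap hypotheses to verify the theorem's conditions. Your additional bookkeeping identifying $\meshMappingDecomp$ with $\meshMappingGlobal$ is consistent with, and slightly more explicit than, the paper's one-line argument.
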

\begin{proof}
	This corresponds to a particular case of Theorem \ref{thm:conservationCoarse}
	with $\meshDecompDecomp=\meshDecompGlobal$, as the required condition
$\subdomainDecompArg{i} =
\cup_{j\in\subdomainSet\subseteq\nat{\nSubdomains}}\subdomainArg{j} $, $i\innat{\nSubdomainsDecomp}$
is satisfied for $\nSubdomainsDecomp=1$, $\subdomainDecompArg{1}=\domain$, and 
$\subdomainSet = \nat{\nSubdomains}$
under the stated assumptions.
\end{proof}

We now derive the proposed conservative Galerkin and conservative LSPG
projection techniques, which equip their associated optimization problems with
equality constraints that enforce conservation over the decomposed mesh
$\meshDecomp$.

\subsection{Conservative Galerkin projection}\label{sec:consGal}
To enable a Galerkin-like projection scheme that enforces conservation, we
equip the unconstrained optimization problems
\eqref{eq:galOptimality}--\eqref{eq:galOptimality2}---which are defined at
the time-continuous level---with equality constraints corresponding to
(time-continuous) conservation \eqref{eq:decompFOMRes} over the decomposed mesh
$\meshDecomp$. The resulting conservative Galerkin solution
$\frac{d\stateApprox}{d\timeVar}\left(\stateInit(\params) +
	\podstate\stateRed,t;\params\right)$
satisfies
\begin{align}\label{eq:galOptimalityStructPres}
	\begin{split}
		\underset{\velocityOptDummy\in\range{\podstate}}{\text{minimize}}&\ 
		\norm{
	\resGalArgs{\velocityOptDummy}{\timeVar}{\stateInit + \podstate\stateRed}{\params}
}_2\\
	\text{subject to}&\
	\meshMapping\resGalArgs{\velocityOptDummy}{\timeVar}{\stateInit +
	\podstate\stateRed}{\params}=\zero.
	\end{split}
\end{align}
Equivalently, the conservative Galerkin generalized coordinates
$\frac{d\stateRed}{d\timeVar}\left(\stateInit(\params) +
	\podstate\stateRed,t;\params\right) $
satisfy
\begin{align}\label{eq:galOptimalityStructPres2}
	\begin{split}
		\underset{\velocityOptDummyRed\in\RR{\nstateRed}}{\text{minimize}}&\ 
		\norm{
	\resGalArgs{\podstate\velocityOptDummyRed}{\timeVar}{\stateInit + \podstate\stateRed}{\params}
}_2\\
	\text{subject to}&\
	\meshMapping\resGalArgs{\podstate\velocityOptDummyRed}{\timeVar}{\stateInit +
	\podstate\stateRed}{\params}=\zero.
	\end{split}
\end{align}

We now provide a finite-volume interpretation of the conservative Galerkin model, define
the feasible set, and provide an algebraic description of the 
solution.
\begin{remark}[Conservative Galerkin ROM ODE: 
	interpretation]\label{rem:GalODEstructPres}
	From Remark \ref{rem:GalODE}, the conservative Galerkin ROM ODE \eqref{eq:galOptimalityStructPres} can be interpreted as minimizing the sum of
	squared
(normalized) \emph{rates of violation of conservation} across
all variables
$\conservedQuantity{i}$, $i\innat{\nConservation}$ and control volumes
$\controlVolArg{j}$, $j\innat{\nControlVol}$ at time instance
$\timeVar$ subject to 
the enforcement of conservation of all variables
$\conservedQuantity{i}$, $i\innat{\nConservation}$ over subdomains
$\subdomainArg{j}$, $j\innat{\nSubdomains}$ at time instance
$\timeVar$
under one approximation: the flux and source terms are
approximated using the finite-volume discretization (i.e.,
$\fluxVecArg{i}\leftarrow\fluxApproxVecArg{i}$, and 
$\sourceEntry{i}\leftarrow\sourceApproxEntry{i}$). 
\end{remark}

\begin{definition}[Feasibility of conservative Galerkin projection]
Problem \eqref{eq:galOptimalityStructPres2} is feasible if the
Galerkin feasible set $\feasibleSetGalArgs{\stateInit(\params) +
\podstate\stateRed}{t}{\params}$, defined as
\begin{equation}
\feasibleSetGalArgs{\stateDummy}{\timeDummy}{\paramsDummy}
\defeq\{\velocityOptDummyTwo\in\RR{\nstateRed}\, |\,
			\meshMapping\resGalArgs{\podstate\velocityOptDummyTwo}{\timeDummy}{\stateDummy}{\paramsDummy}=\zero\},
			\end{equation}
			is non-empty.
\end{definition}

\begin{theorem}
	If Problem \eqref{eq:galOptimalityStructPres2} is feasible, then the solution
	is unique and satisfies the time-dependent saddle-point problem
\begin{equation} \label{eq:galODEStructPresSaddle}
	\begin{bmatrix}
		\identity&\podstate^T\meshMapping^T\\
		\meshMapping\podstate & \zero
	\end{bmatrix}
	\begin{bmatrix}
		\frac{d\stateRed}{dt} \\
		\lagrangeMultipliersGalSolve
	\end{bmatrix}
	=
	\begin{bmatrix}
\podstate^T\velocityArgs{\stateInit(\params) +
	\podstate\stateRed}{\timeVar}{\params}\\
\velocityDecomp(\stateInit(\params) +
	\podstate\stateRed,t;\params)
	\end{bmatrix}, \quad \stateRed(0) = \zero,
\end{equation} 
which can be expressed equivalently as
	\begin{equation}	\label{eq:galODEStructPres}
\frac{d\stateRed}{dt} = 
\podstate^T\velocityArgs{\stateInit(\params) +
	\podstate\stateRed}{\timeVar}{\params} + 
\deltaVelocityGalArgs{\stateInit(\params) +
	\podstate\stateRed}{\timeVar}{\params}
, \quad \stateRed(0) = \zero
	\end{equation}	
	with
	\begin{equation} \label{eq:deltaVelocityGalArgsDef}
\deltaVelocityGalArgs{\stateDummy}{\timeDummy}{\paramsDummy}
	\defeq
	(\meshMapping\podstate)^+
[\velocityDecompArgs{\stateDummy}{\timeDummy}{\paramsDummy}
-\meshMapping\podstate\podstate^T\velocityArgs{\stateDummy}{\timeDummy}{\paramsDummy}]
\end{equation} 
and Lagrange multipliers
\begin{equation} \label{eq:galConsSolAlgOrthoProjLagrange}
\lagrangeMultipliersGalSolve = -(\meshMapping\podstate)
^{+T}(\meshMapping\podstate)^+[\velocityDecompArgs{\stateDummy}{\timeDummy}{\paramsDummy}
-\meshMapping\podstate\podstate^T\velocityArgs{\stateDummy}{\timeDummy}{\paramsDummy}].
\end{equation} 
\end{theorem}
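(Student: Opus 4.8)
The plan is to view Problem~\eqref{eq:galOptimalityStructPres2}, at fixed generalized coordinates $\stateRed$, time $\timeVar$, and parameters $\params$, as an equality-constrained linear least-squares problem in the single unknown $\velocityOptDummyRed$. Setting $\bds{f}\defeq\velocityArgs{\stateInit(\params)+\podstate\stateRed}{\timeVar}{\params}$, the residual appearing in both the objective and the constraint is $\resGalArgs{\podstate\velocityOptDummyRed}{\timeVar}{\stateInit+\podstate\stateRed}{\params}=\podstate\velocityOptDummyRed-\bds{f}$, so the objective is $\|\podstate\velocityOptDummyRed-\bds{f}\|_2$ and the feasible set is $\{\velocityOptDummyRed\,|\,\meshMapping\podstate\velocityOptDummyRed=\meshMapping\bds{f}\}$, an affine set that is non-empty precisely when $\meshMapping\bds{f}\in\range{\meshMapping\podstate}$ --- i.e.\ precisely under the feasibility hypothesis. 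Minimizing $\|\podstate\velocityOptDummyRed-\bds{f}\|_2$ is equivalent to minimizing its square, and using $\podstate^T\podstate=\identity$ one expands $\|\podstate\velocityOptDummyRed-\bds{f}\|_2^2=\|\velocityOptDummyRed-\podstate^T\bds{f}\|_2^2+\|(\identity-\podstate\podstate^T)\bds{f}\|_2^2$, the last term being independent of $\velocityOptDummyRed$. Hence the problem is the orthogonal projection (in the Euclidean norm) of $\podstate^T\bds{f}$ onto that affine set; the squared objective is strictly convex with Hessian $2\identity$ and the feasible set is a non-empty closed affine subspace, so the minimizer exists and is unique. This establishes uniqueness of the solution.

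Next I would derive the saddle-point characterization from the Karush--Kuhn--Tucker conditions, which are necessary and sufficient here because the problem is a convex quadratic program with affine constraints and a non-empty feasible set. With Lagrangian $\tfrac12\|\podstate\velocityOptDummyRed-\bds{f}\|_2^2+\lagrangeMultipliers^T\meshMapping(\podstate\velocityOptDummyRed-\bds{f})$, stationarity in $\velocityOptDummyRed$ gives $\podstate^T(\podstate\velocityOptDummyRed-\bds{f})+\podstate^T\meshMapping^T\lagrangeMultipliers=\zero$, i.e.\ $\velocityOptDummyRed+\podstate^T\meshMapping^T\lagrangeMultipliers=\podstate^T\bds{f}$ after applying $\podstate^T\podstate=\identity$, while primal feasibility gives $\meshMapping\podstate\velocityOptDummyRed=\meshMapping\bds{f}$. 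Identifying the optimal $\velocityOptDummyRed$ with $\frac{d\stateRed}{dt}$ and $\lagrangeMultipliers$ with $\lagrangeMultipliersGalSolve$, and noting that $\meshMapping\bds{f}$ equals the decomposed velocity $\velocityDecompArgs{\stateInit(\params)+\podstate\stateRed}{\timeVar}{\params}$ by the construction of Section~\ref{sec:DD}, these two equations are exactly the block system~\eqref{eq:galODEStructPresSaddle}.

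For the closed form~\eqref{eq:galODEStructPres}--\eqref{eq:deltaVelocityGalArgsDef} I would use the standard pseudoinverse formula for projection onto an affine set: if $\bds{c}\in\range{\bds{B}}$, then the unique minimizer of $\|\bds{v}-\bds{a}\|_2$ subject to $\bds{B}\bds{v}=\bds{c}$ is $\bds{a}-\bds{B}^+(\bds{B}\bds{a}-\bds{c})$. This follows by writing an arbitrary feasible point as $\bds{B}^+\bds{c}+(\identity-\bds{B}^+\bds{B})\bds{a}+\bds{z}$ with $\bds{z}\in\ker(\bds{B})$ (noting $\bds{B}\bds{B}^+\bds{c}=\bds{c}$ since $\bds{c}\in\range{\bds{B}}$), and observing that $[\bds{a}-\bds{B}^+(\bds{B}\bds{a}-\bds{c})]-\bds{a}=-\bds{B}^+(\bds{B}\bds{a}-\bds{c})$ lies in $\range{\bds{B}^T}$, which is orthogonal to $\ker(\bds{B})$, so the cross term in the expanded squared norm vanishes and the minimum occurs at $\bds{z}=\zero$. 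Applying this with $\bds{a}=\podstate^T\bds{f}$, $\bds{B}=\meshMapping\podstate$, $\bds{c}=\meshMapping\bds{f}$ gives $\frac{d\stateRed}{dt}=\podstate^T\bds{f}+\pinv{\meshMapping\podstate}(\meshMapping\bds{f}-\meshMapping\podstate\podstate^T\bds{f})$, which is~\eqref{eq:galODEStructPres} with $\deltaVelocityGal$ as in~\eqref{eq:deltaVelocityGalArgsDef}. Stationarity then forces $\podstate^T\meshMapping^T\lagrangeMultipliersGalSolve=\podstate^T\bds{f}-\frac{d\stateRed}{dt}=-\pinv{\meshMapping\podstate}(\meshMapping\bds{f}-\meshMapping\podstate\podstate^T\bds{f})$, and I would verify that the choice $\lagrangeMultipliersGalSolve=-(\meshMapping\podstate)^{+T}\pinv{\meshMapping\podstate}(\meshMapping\bds{f}-\meshMapping\podstate\podstate^T\bds{f})$ of~\eqref{eq:galConsSolAlgOrthoProjLagrange} satisfies this equation; upon substitution this reduces, via the Moore--Penrose identities $(\meshMapping\podstate)^T(\meshMapping\podstate)^{+T}=(\pinv{\meshMapping\podstate}\meshMapping\podstate)^T=\pinv{\meshMapping\podstate}\meshMapping\podstate$ (symmetry of $\bds{B}^+\bds{B}$) and $\pinv{\meshMapping\podstate}\meshMapping\podstate\pinv{\meshMapping\podstate}=\pinv{\meshMapping\podstate}$, to an identity.

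The step I expect to require the most care is the Lagrange multiplier rather than $\frac{d\stateRed}{dt}$: the matrix $\meshMapping\podstate$ may fail to have full row rank --- its rows encode conservation over the subdomains, which can be linearly dependent (for instance when subdomains overlap, or when their union already covers the domain so that a global-conservation row is redundant) --- in which case the saddle-point matrix in~\eqref{eq:galODEStructPresSaddle} is singular and $\lagrangeMultipliersGalSolve$ is determined only modulo $\ker(\podstate^T\meshMapping^T)$; the stated formula must then be read as \emph{a} valid multiplier, namely the minimum-norm one. The only other delicate point is ensuring the pseudoinverse projection formula actually returns a feasible point, which is exactly where the hypothesis $\meshMapping\bds{f}\in\range{\meshMapping\podstate}$ enters, through $\meshMapping\podstate\pinv{\meshMapping\podstate}\meshMapping\bds{f}=\meshMapping\bds{f}$.
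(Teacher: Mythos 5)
Your proposal is correct and follows essentially the same route as the paper: both obtain the saddle-point system from stationarity of the Lagrangian of the convex equality-constrained least-squares problem (using $\podstate^T\podstate=\identity$), and both obtain the closed form by eliminating the constraint through an orthogonal row-space/null-space decomposition of $\meshMapping\podstate$ --- the paper via an explicit SVD and null-space basis $\nullBasis$, you via the equivalent pseudoinverse projection formula $\bds{a}-\bds{B}^+(\bds{B}\bds{a}-\bds{c})$, with the same Moore--Penrose identities closing the multiplier verification. Your remark that $\lagrangeMultipliersGalSolve$ is determined only modulo $\ker(\podstate^T\meshMapping^T)$ when $\meshMapping\podstate$ is row-rank-deficient, so that \eqref{eq:galConsSolAlgOrthoProjLagrange} should be read as a (minimum-norm) valid multiplier, is a legitimate refinement of a point the paper's uniqueness claim leaves implicit.
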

\begin{proof}
The Lagrangian associated with problem
\eqref{eq:galOptimalityStructPres2} can be written as
\begin{equation}
	\lagrangianGalArgs{\velocityOptDummyRed}{\lagrangeMultipliersDummy}{t}{\params} \defeq 
	\frac{1}{2}\norm{\podstate\velocityOptDummyRed - 
	\velocityArgs{\stateInit(\params) +
\podstate\stateRed}{\timeVar}{\params}}_2^2 + \lagrangeMultipliersDummy^T
	\left[\meshMapping\podstate\velocityOptDummyRed - \velocityDecomp(\stateInit(\params) +
	\podstate\stateRed,t;\params)\right].
\end{equation}
We note that problem
\eqref{eq:galOptimalityStructPres2}
corresponds to a convex linear least-squares problem with linear
equality constraints; thus,
$(\frac{d\stateRed}{dt}, \lagrangeMultipliersGalSolve) $ is a unique solution if
and only if it
 satisfies the stationarity conditions
 $$
 \reviewerB{
\frac{\partial\lagrangianGal}{\partial\velocityOptDummyRed}
\left(\frac{d\stateRed}{dt},
	\lagrangeMultipliersGalSolve,\timeVar;\params\right)
= \zero,\quad
\frac{\partial\lagrangianGal}{\partial\lagrangeMultipliersDummy}\left(\frac{d\stateRed}{dt},
\lagrangeMultipliersGalSolve,\timeVar;\params\right)=\zero.}$$
\noindent Noting that $\podstate^T\podstate=\identity$, these conditions are equivalent to
Eq.~\eqref{eq:galODEStructPresSaddle}.
The proof of Eqs.~\eqref{eq:galODEStructPres}--\eqref{eq:galConsSolAlgOrthoProjLagrange} follows the null-space method for solving 
	optimization problems with linear equality constraints.
	Feasibility implies that the feasible set 
	$\feasibleSetGalArgs{\stateInit(\params) +
\podstate\stateRed}{t}{\params}$
	is non-empty, which in turn implies that
	 the second block of
Eqs.~\eqref{eq:galODEStructPresSaddle} is consistent
and
	$\velocityDecompArgs{\stateDummy}{\timeDummy}{\paramsDummy}\in\range{\meshMapping\podstate}$
	or equivalently
	\begin{equation} \label{eq:velocityDecompRed}
\velocityDecompArgs{\stateDummy}{\timeDummy}{\paramsDummy} =
\leftSing\velocityDecompRedArgs{\stateDummy}{\timeDummy}{\paramsDummy} ,
\end{equation} 
with $\velocityDecompRed:
\RR{\ndof}\times\timeDomain\times \paramDomain\rightarrow
\RR{\rank{\meshMapping\podstate}}$,
where 
\begin{equation}\label{eq:meshMappingpodstateSVD}
\meshMapping\podstate = \leftSing\Sing\rightSing^T
\end{equation}
is the singular value decomposition with
$\leftSing\in\orthomat{\rank{\meshMapping\podstate}}{\ndofDecomp}$ and
$\leftSing^T\leftSing=\identity$,
$\Sing\equiv\diag{\sVal{1},\ldots,\sVal{\rank{\meshMapping\podstate}}}$ and 
$\sVal{1}\geq\cdots\geq\sVal{\rank{\meshMapping\podstate}} > 0$, and
$\rightSing\in\orthomat{\rank{\meshMapping\podstate}}{\nstateRed}$ with
$\rightSing^T\rightSing=\identity$. Because
$\range{\rightSing}\oplus\range{\nullBasis}=\RR{\nstateRed}$
with
$\nullBasis\in\orthomat{\left(\nstateRed-\rank{\meshMapping\podstate}\right)}{\nstateRed}$
an orthogonal basis for the null space of $\meshMapping\podstate$, we can
decompose the unknown vector
$\velocityOptDummyRed\in\RR{\nstateRed}$ appearing in optimization problem
\eqref{eq:galOptimalityStructPres2} as
\begin{equation}\label{eq:velocityOptDummyRedDecomp}
\velocityOptDummyRed = \rightSing\velocityOptDummyRedOne +
\nullBasis\velocityOptDummyRedTwo
\end{equation}
with $\velocityOptDummyRedOne\in\RR{\rank{\meshMapping\podstate}}$ and 
$\velocityOptDummyRedTwo\in\RR{\nstateRed-\rank{\meshMapping\podstate}}$.
Substituting Eqs.~\eqref{eq:velocityDecompRed}, \eqref{eq:meshMappingpodstateSVD},
and \eqref{eq:velocityOptDummyRedDecomp} into the constraints of Problem
\eqref{eq:galOptimalityStructPres2} and noting that
$\rightSing^T\nullBasis=\zero$ yields
\begin{align}\label{eq:velocityOptDummyRedOnefirst}
\begin{split} 
	\velocityOptDummyRedOne =\Sing^{-1}\reviewerB{\leftSing^T}
	\velocityDecompArgs{\stateInit(\params) +
	\podstate\stateRed}{t}{\params}.
\end{split} 
\end{align} 
Pre-multiplying \eqref{eq:velocityOptDummyRedOnefirst} by $\rightSing$,
substituting $\pinv{\meshMapping\podstate}=\rightSing\Sing^{-1}\leftSing^T$,
and using \eqref{eq:velocityDecompRed} yields 
\begin{equation} \label{eq:velocityOptDummyRedOne}
\rightSing\velocityOptDummyRedOne = \pinv{\meshMapping\podstate}
\velocityDecompArgs{\stateInit(\params) +
	\podstate\stateRed}{t}{\params}.
	\end{equation} 
Thus, decomposing the solution as
\begin{equation}\label{eq:galSolDecomp} 
\frac{d\stateRed}{d\timeVar} = 
	\rightSing\left[\frac{d\stateRed}{d\timeVar}\right]_1 +
\nullBasis\left[\frac{d\stateRed}{d\timeVar}\right]_2 ,
	  \end{equation} 
		we have 
		\begin{equation} \label{eq:galConsSolAlgOne}
\rightSing\left[\frac{d\stateRed}{d\timeVar}\right]_1\ =\pinv{\meshMapping\podstate}
\velocityDecompArgs{\stateInit(\params) +
	\podstate\stateRed}{t}{\params}.
	  \end{equation} 
	Now, substituting Eqs.~\eqref{eq:velocityOptDummyRedDecomp} with
$\velocityOptDummyRedOne$	defined in
	\eqref{eq:velocityOptDummyRedOne} into Problem
	\eqref{eq:galOptimalityStructPres2} yields an unconstrained optimization
	problem  in $\velocityOptDummyRedTwo$ only, i.e.,
	$\left[\frac{d\stateRed}{d\timeVar}\right]_2$ is the solution to
\begin{align}\label{eq:galOptimalityStructPres2uncon}
	\begin{split}
		\underset{\velocityOptDummyRedTwo\in\RR{\nstateRed-\rank{\meshMapping\podstate}}}{\text{minimize}}&\
		\norm{\podstate[\pinv{\meshMapping\podstate}
\velocityDecompArgs{\stateInit(\params) +
	\podstate\stateRed}{t}{\params}+\nullBasis\velocityOptDummyRedTwo]- 
	\velocityArgs{\stateInit(\params) +
\podstate\stateRed}{\timeVar}{\params}}_2,
	\end{split}
\end{align}
which---using orthogonality of $\podstate\nullBasis$---is simply
\begin{equation} \label{eq:galConsSolAlgTwoFirst}
	\left[\frac{d\stateRed}{d\timeVar}\right]_2 =
	\nullBasis^T\podstate^T[\velocityArgs{\stateInit(\params) +
		\podstate\stateRed}{\timeVar}{\params}-\podstate\pinv{\meshMapping\podstate}
	\velocityDecompArgs{\stateInit(\params) +
	\podstate\stateRed}{t}{\params}].
\end{equation} 
Applying Eqs.\eqref{eq:galConsSolAlgOne}, and
\eqref{eq:galConsSolAlgTwoFirst} to Eq.~\eqref{eq:galSolDecomp} yields
\begin{equation}\label{eq:galConsSolAlgFirst}
\frac{d\stateRed}{d\timeVar} = 
	\pinv{\meshMapping\podstate}
\velocityDecompArgs{\stateInit(\params) +
	\podstate\stateRed}{t}{\params} +
\nullBasis\nullBasis^T[\podstate^T\velocityArgs{\stateInit(\params) +
		\podstate\stateRed}{\timeVar}{\params}-\pinv{\meshMapping\podstate}
	\velocityDecompArgs{\stateInit(\params) +
	\podstate\stateRed}{t}{\params}] ,
\end{equation}

Applying $\nullBasis^T\pinv{\meshMapping\podstate}=\zero$ to 
Eq.~\eqref{eq:galConsSolAlgFirst} and observing that 
$
\nullBasis\nullBasis^T\velocityOptDummyRed = 
(\identity
-\rightSing\rightSing^T)\velocityOptDummyRed
=(\identity
-(\meshMapping\podstate)^{+}\meshMapping\podstate)\velocityOptDummyRed
$ yields Eq.~\eqref{eq:galODEStructPres}.
Finally, Eq.~\eqref{eq:galConsSolAlgOrthoProjLagrange} arises from
substituting \eqref{eq:galODEStructPres} into \eqref{eq:galODEStructPresSaddle}.
\end{proof}

\noindent 
 Comparing Eqs.~\eqref{eq:galODE} and \eqref{eq:galODEStructPres} reveals that
 equipping the Galerkin-ROM optimization problem with equality constraints
 associated with conservation has the effect of modifying the velocity vector
 through the addition of the term $\deltaVelocityGal$ defined in
 Eq.~\eqref{eq:deltaVelocityGalArgsDef}.  Note that
 Eq.~\eqref{eq:galODEStructPres} corresponds to an initial-value problem that
 can be integrated in time, e.g., using a linear multistep method.

We now show that the conservative Galerkin velocity can be expressed
as the orthogonal projection of the standard Galerkin velocity onto the
feasible set.
\begin{corollary}
If Problem \eqref{eq:galOptimalityStructPres} is feasible, then the solution
corresponds to the orthogonal projection of the standard Galerkin velocity
\eqref{eq:galODE} onto the feasible set, i.e.,
\begin{equation} \label{eq:galConsSolAlgOrthoProj}
	\frac{d\stateRed}{d\timeVar}\left(\stateInit(\params) +
	\podstate\stateRed,t;\params\right) =
	\underset{\velocityOptDummy\in\feasibleSetGalArgs{\stateInit(\params) +
\podstate\stateRed}{t}{\params}
}{\arg\min}\norm{\velocityOptDummy-
\podstate^T\velocityArgs{\stateInit(\params) +
	\podstate\stateRed}{\timeVar}{\params}
}_2.
\end{equation} 
\end{corollary}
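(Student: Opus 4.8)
The plan is to show that the objective of the conservative Galerkin problem \eqref{eq:galOptimalityStructPres2} and the projection objective in \eqref{eq:galConsSolAlgOrthoProj} differ, over the entire feasible set, only by an additive constant that is independent of the optimization variable; since the two problems minimize over the \emph{same} set $\feasibleSetGalArgs{\stateInit(\params) + \podstate\stateRed}{t}{\params}$, they have the same minimizer. Throughout, abbreviate $\velocity\equiv\velocityArgs{\stateInit(\params) + \podstate\stateRed}{\timeVar}{\params}$, so that by the definition of the FOM ODE residual the objective in \eqref{eq:galOptimalityStructPres2} is $\norm{\podstate\velocityOptDummyRed - \velocity}_2$.

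First I would use the orthogonal splitting $\identity = \podstate\podstate^T + (\identity - \podstate\podstate^T)$, which is an orthogonal decomposition because $\podstate^T\podstate = \identity$. Writing $\podstate\velocityOptDummyRed - \velocity = \podstate(\velocityOptDummyRed - \podstate^T\velocity) - (\identity - \podstate\podstate^T)\velocity$ and noting that $\podstate(\velocityOptDummyRed - \podstate^T\velocity)\in\range{\podstate}$ is orthogonal to $(\identity - \podstate\podstate^T)\velocity$, the Pythagorean identity gives $\norm{\podstate\velocityOptDummyRed - \velocity}_2^2 = \norm{\podstate(\velocityOptDummyRed - \podstate^T\velocity)}_2^2 + \norm{(\identity - \podstate\podstate^T)\velocity}_2^2$. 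Again using $\podstate^T\podstate = \identity$, the first term equals $\norm{\velocityOptDummyRed - \podstate^T\velocity}_2^2$, while the second term is independent of $\velocityOptDummyRed$ (the argument $\stateInit(\params) + \podstate\stateRed$ is fixed). Hence, on all of $\RR{\nstateRed}$ and in particular on the feasible set, the two objectives agree up to the constant $\norm{(\identity - \podstate\podstate^T)\velocity}_2^2$.

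Next I would observe that $\feasibleSetGalArgs{\stateInit(\params) + \podstate\stateRed}{t}{\params}$ is an affine subspace of $\RR{\nstateRed}$ --- the constraint $\meshMapping\resGalArgs{\podstate\velocityOptDummyTwo}{\timeVar}{\stateInit + \podstate\stateRed}{\params} = \meshMapping\podstate\velocityOptDummyTwo - \meshMapping\velocity = \zero$ is linear in $\velocityOptDummyTwo$ --- and it is nonempty by the feasibility hypothesis. Minimizing the strictly convex map $\velocityOptDummyRed\mapsto\norm{\velocityOptDummyRed - \podstate^T\velocity}_2$ over this nonempty affine set therefore yields a unique point, which is by definition the orthogonal projection of $\podstate^T\velocity$ onto $\feasibleSetGal$, i.e.\ the right-hand side of \eqref{eq:galConsSolAlgOrthoProj}. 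Since adding a constant to an objective does not change its minimizer, this unique point coincides with the (unique, by the preceding theorem) solution $\frac{d\stateRed}{d\timeVar}(\stateInit(\params) + \podstate\stateRed,\timeVar;\params)$ of \eqref{eq:galOptimalityStructPres2}, establishing the claim.

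I do not expect a substantive obstacle: the argument is essentially the standard reduction of a minimum-residual problem with orthonormal basis to a least-squares problem in the generalized coordinates, carried out under a linear constraint. The only points requiring care are bookkeeping ones --- verifying that the affine shift in the feasible-set description does not interfere, and noting explicitly that passing from ``objectives equal up to an additive constant'' to ``same $\arg\min$'' uses strict convexity (equivalently, the uniqueness already furnished by the preceding theorem), which should be cited rather than re-proved.
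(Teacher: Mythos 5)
Your proof is correct, but it takes a genuinely different route from the paper's. The paper argues from the explicit solution representation derived in the preceding theorem: it identifies the feasible set as the affine subspace $\pinv{\meshMapping\podstate}\velocityDecompArgs{\stateInit(\params)+\podstate\stateRed}{t}{\params}+\range{\nullBasis}$ (using the SVD-based null-space construction), invokes the closed-form expression for the orthogonal projection onto an affine subspace spanned by an orthogonal matrix, and checks that the formula already obtained for $\frac{d\stateRed}{d\timeVar}$ coincides with that projection. You instead never touch the explicit formula: you show that the constrained Galerkin objective and the projection objective differ only by the additive constant $\norm{(\identity-\podstate\podstate^T)\velocityArgs{\stateInit(\params)+\podstate\stateRed}{\timeVar}{\params}}_2^2$, via the orthogonal splitting $\identity=\podstate\podstate^T+(\identity-\podstate\podstate^T)$ and $\podstate^T\podstate=\identity$, so the two problems are literally the same minimization over the same nonempty affine set $\feasibleSetGal$, and the minimizer of the second is by definition the orthogonal projection of $\podstate^T\velocityArgs{\stateInit(\params)+\podstate\stateRed}{\timeVar}{\params}$ onto that set. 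Your route is more self-contained (it needs only feasibility, linearity of the constraint in $\velocityOptDummyRed$, and uniqueness of projection onto a nonempty affine set, rather than the theorem's SVD machinery), while the paper's route has the virtue of tying the corollary directly to the already-derived solution formula. One cosmetic point: the map $\velocityOptDummyRed\mapsto\norm{\velocityOptDummyRed-\podstate^T\velocityArgs{\stateInit(\params)+\podstate\stateRed}{\timeVar}{\params}}_2$ is convex but not strictly convex; uniqueness of the minimizer follows from strict convexity of its square, or simply from uniqueness of the orthogonal projection onto a closed convex set, or from the uniqueness already established in the theorem you cite, any of which closes that step.
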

\begin{proof}
	We first identify the feasible set from Eqs.~\eqref{eq:galSolDecomp} and \eqref{eq:galConsSolAlgOne} as
	\begin{equation}
\feasibleSetGalArgs{\stateInit(\params) +
\podstate\stateRed}{t}{\params}=
	\pinv{\meshMapping\podstate}
\velocityDecompArgs{\stateInit(\params) +
\podstate\stateRed}{t}{\params}+\range{\nullBasis}.
\end{equation}
Noting that the orthogonal projection of a vector $\stateDummy$ onto an affine
subspace $\subspaceRef + \range{\subspaceOrthoBasis}$ with
$\subspaceOrthoBasis$ an orthogonal matrix with more rows than columns is
simply $\subspaceRef +
\subspaceOrthoBasis\subspaceOrthoBasis^T(\stateDummy-\subspaceRef)$, we identify
$\frac{d\stateRed}{d\timeVar} $ as the orthogonal projection of the standard
Galerkin velocity $\podstate^T\velocityArgs{\stateInit(\params) +
		\podstate\stateRed}{\timeVar}{\params}$ onto the affine subspace
		corresponding to the feasible set
		$	
\feasibleSetGalArgs{\stateInit(\params) +
\podstate\stateRed}{t}{\params}
$; Eq.~\eqref{eq:galConsSolAlgOrthoProj}
derives from this result and the optimality property of orthogonal projectors.
\end{proof}

	Of course, numerically solving the conservative Galerkin ROM ODE, requires introducing a
	time integrator. Applying a linear multistep scheme to solve
	Eq.~\eqref{eq:galODEStructPresSaddle} characterizing the
	conservative Galerkin ROM ODE yields at time instance $n$
	yields the conservative Galerkin ROM O$\Delta$E
	\begin{equation}\label{eq:consGalTimeDisc} 
	\begin{alignedat} {2}
	&\sum_{j=0}^k\alpha_j\stateRedTime{\timestepit-j} +
	\sum_{j=0}^k\alpha_j\podstate^T\meshMapping^T\lagrangeMultipliersGalTime{\timestepit-j}
	&&= 
\dt\sum_{j=0}^k\beta_j\podstate^T\velocityArgs{\stateInit(\params) +
\podstate\stateRedTime{\timestepit-j}}{\timeVar}{\params}\\
&\sum_{j=0}^k\alpha_j\meshMapping\podstate\stateRedTime{\timestepit-j} &&=
\dt\sum_{j=0}^k\beta_j\velocityDecompArgs{\stateInit(\params) +
\podstate\stateRedTime{\timestepit-j}}{\timeVar}{\params}.
\end{alignedat} 
\end{equation} 
We now demonstrate that conservative Galerkin projection and time discretization are
commutative.
\begin{theorem}[Commutativity of conservative Galerkin projection and time
	discretization]\label{sec:commutative}
 \ \\ Conservative Galerkin projection is equivalent to 
computing an
approximate solution
$(\stateApprox(\timeVar;\params),\lagrangeMultipliersGal(\timeVar;\params))\in
\stateInit(\params) + \range{\podstate} \times \RR{\ndofDecomp}$
 via Galerkin
projection applied to the system 
\begin{equation}\label{eq:fullJointODE}
	\begin{bmatrix}
		\identity & \meshMapping^T\\
		\meshMapping & \zero
	\end{bmatrix}
	\begin{bmatrix}
		\frac{d\state}{dt}\\
\lagrangeMultipliersGalSolve
	\end{bmatrix}
	=
	\begin{bmatrix}
	\velocityArgs{\state}{\timeVar}{\params}\\
\velocityDecomp(\state,t;\params)
	\end{bmatrix}\reviewerB{.}
\end{equation}
Further, performing conservative Galerkin projection on
Eq.~\eqref{eq:fullJointODE} and subsequently applying time discretization
yields the same model as first applying time discretization on
Eq.~\eqref{eq:fullJointODE} and subsequently performing conservative Galerkin
projection.
\end{theorem}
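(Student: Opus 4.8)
The plan is to establish the two assertions in turn, and both come down to direct substitution once the meaning of ``Galerkin projection applied to \eqref{eq:fullJointODE}'' is pinned down: one seeks $(\stateApprox,\lagrangeMultipliersGal)$ with $\stateApprox=\stateInit(\params)+\podstate\stateRed$ and $\lagrangeMultipliersGal\in\RR{\ndofDecomp}$ \emph{unreduced}, and requires the residual of \eqref{eq:fullJointODE} to be orthogonal to $\range{\podstate}$ in its first ($\ndof$-dimensional) block --- the only block whose unknown has been confined to a subspace --- and to all of $\RR{\ndofDecomp}$ in its second block. For the first claim I would substitute $\frac{d\stateApprox}{d\timeVar}=\podstate\frac{d\stateRed}{d\timeVar}$, left-multiply the first block by $\podstate^T$, and use $\podstate^T\podstate=\identity$: the first block collapses to $\frac{d\stateRed}{d\timeVar}+\podstate^T\meshMapping^T\lagrangeMultipliersGalSolve=\podstate^T\velocityArgs{\stateInit(\params)+\podstate\stateRed}{\timeVar}{\params}$, while the (unprojected) second block reads $\meshMapping\podstate\frac{d\stateRed}{d\timeVar}=\velocityDecompArgs{\stateInit(\params)+\podstate\stateRed}{\timeVar}{\params}$. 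These are exactly the two rows of the saddle-point ODE \eqref{eq:galODEStructPresSaddle} characterizing conservative Galerkin projection, with $\lagrangeMultipliersGal$ playing the role of the multiplier; and $\stateRed(0)=\zero$ follows identically from Galerkin projection of $\state(0;\params)=\stateInit(\params)$. This proves the equivalence stated in the first sentence.

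For the second claim I would compute both composition orders and check each lands on the conservative Galerkin O$\Delta$E \eqref{eq:consGalTimeDisc}. Path one: by the first part, conservative Galerkin projection of \eqref{eq:fullJointODE} is \eqref{eq:galODEStructPresSaddle}; applying the linear $k$-step scheme \eqref{eq:linearMultistepRev} to \eqref{eq:galODEStructPresSaddle} --- i.e.\ keeping the block matrix on the left and replacing $\frac{d}{d\timeVar}\bmat{\stateRed\\\lagrangeMultipliersGal}$ by $\sum_{j=0}^k\alpha_j\bmat{\stateRedTime{\timestepit-j}\\\lagrangeMultipliersGalTime{\timestepit-j}}$ and the right-hand side by $\dt\sum_{j=0}^k\beta_j(\cdots)$ --- reproduces \eqref{eq:consGalTimeDisc} exactly. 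Path two: applying the same scheme to \eqref{eq:fullJointODE} in the full variables gives $\sum_{j=0}^k\alpha_j(\state^{\timestepit-j}+\meshMapping^T\lagrangeMultipliers^{\timestepit-j})=\dt\sum_{j=0}^k\beta_j\velocityArgs{\state^{\timestepit-j}}{\timeArg{\timestepit-j}}{\params}$ together with $\sum_{j=0}^k\alpha_j\meshMapping\state^{\timestepit-j}=\dt\sum_{j=0}^k\beta_j\velocityDecompArgs{\state^{\timestepit-j}}{\timeArg{\timestepit-j}}{\params}$; performing conservative Galerkin projection on this O$\Delta$E --- substituting $\state^{\timestepit-j}\leftarrow\stateInit(\params)+\podstate\stateRedTime{\timestepit-j}$, left-multiplying the first block by $\podstate^T$, leaving the second block --- and then invoking the consistency relation $\sum_{j=0}^k\alpha_j=0$ to annihilate the $\stateInit(\params)$ contributions, together with $\podstate^T\podstate=\identity$, collapses it once more to \eqref{eq:consGalTimeDisc}. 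Since both orders produce \eqref{eq:consGalTimeDisc}, the two models agree.

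The computation is essentially bookkeeping, so the real work is conceptual --- settling the right conventions. One must be explicit that (i) the multiplier block is not reduced, so its test space is all of $\RR{\ndofDecomp}$ and subdomain conservation is enforced exactly at every time instance, consistent with the definition of the decomposed O$\Delta$E \eqref{eq:resLinMultiSolveDecomp}; (ii) the multistep scheme is applied to the differential-algebraic form of \eqref{eq:fullJointODE}, i.e.\ the singular block matrix stays on the left of the multistep combination, so the algebraic (constraint) rows are simply imposed at the new time level; and (iii) both paths are supplied identical start-up data --- the initial condition $\stateRed(0)=\zero$ and, for a genuine $k$-step method, the auxiliary starting values $\stateRedTime{\timestepit-j}$ and $\lagrangeMultipliersGalTime{\timestepit-j}$ --- so that an induction on $\timestepit$ shows the two generated trajectories are genuinely identical, not merely governed by the same recursion. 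With these points fixed, every remaining step is a one-line substitution.
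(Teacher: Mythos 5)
Your proposal is correct and follows essentially the same route as the paper's proof: substitute the affine approximation into \eqref{eq:fullJointODE}, project the first block by $\podstate^T$ while leaving the constraint block untouched to recover \eqref{eq:galODEStructPresSaddle}, then verify that both composition orders land on the conservative Galerkin O$\Delta$E \eqref{eq:consGalTimeDisc}. Your explicit use of the consistency condition $\sum_{j=0}^k\alpha_j=0$ to eliminate the $\stateInit(\params)$ contributions, and the remark about identical start-up data, are small points the paper leaves implicit but do not change the argument.
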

\begin{proof}
	The first part of the theorem can be derived by noticing that substituting
	Eq.~\eqref{eq:stateApprox} in \eqref{eq:fullJointODE} and premultiplying
	by $\begin{bmatrix}
		 \podstate & \zero\\
		 \zero & \identity
	\end{bmatrix}$ yields the conservative Galerkin saddle-point
	system \eqref{eq:galODEStructPresSaddle}. Then, applying a linear multistep scheme to solve
	Eq.~\eqref{eq:galODEStructPresSaddle} 
	yields the conservative Galerkin ROM O$\Delta$E
\eqref{eq:consGalTimeDisc} above.
Now, applying a linear multistep scheme to integrate \eqref{eq:fullJointODE} in time yields
\begin{equation} \label{eq:consGalTimeDiscFull}
	\begin{alignedat} {2}
	&\sum_{j=0}^k\alpha_j\stateArg{\timestepit-j} +
	\sum_{j=0}^k\alpha_j\meshMapping^T\lagrangeMultipliersGalTime{\timestepit-j}
	&&= 
\dt\sum_{j=0}^k\beta_j\velocityArgs{
\stateArg{\timestepit-j}}{\timeVar}{\params}\\
&\sum_{j=0}^k\alpha_j\meshMapping\stateArg{\timestepit-j} &&=
\dt\sum_{j=0}^k\beta_j\velocityDecompArgs{
\stateArg{\timestepit-j}}{\timeVar}{\params}.
\end{alignedat} 
\end{equation} 
Because applying conservative Galerkin projection to Eq.~\eqref{eq:consGalTimeDiscFull} yields
Eq.~\eqref{eq:consGalTimeDisc}, we conclude that conservative Galerkin projection and time
discretization are commutative.
\end{proof}

\subsection{Conservative LSPG projection}\label{sec:consLSPG}
Analogously to the procedure employed to derive the conservative Galerkin ROM,
we now equip the
unconstrained optimization problem
\eqref{eq:LSPGODeltaE}--\eqref{eq:LSPGODeltaEred}---which is defined at
the time-discrete level---with equality constraints corresponding to
(time-discrete) conservation \eqref{eq:resLinMultiSolveDecomp} over the decomposed mesh
$\meshDecomp$. The resulting conservative LSPG solution
$\stateApproxArg{n}$ satisfies
\begin{align} \label{eq:LSPGODeltaEStructPres}
	\begin{split}
\underset{\stateOptDummy\in\stateInit(\params) +
	\range{\podstate}}{\text{minimize}}&\
	\norm{\weightingMatrixArgs{\stateOptDummy}{\params}\resArgs{n}{\stateOptDummy}{\params}}_2\\
	\text{subject to}&\ \resDecompArgs{n}{\stateOptDummy}{\params}=0.
	\end{split}
\end{align} 
Equivalently, the conservative LSPG generalized coordinates $\stateRedTime{n}$
satisfy
\begin{align} 
	\label{eq:LSPGODeltaEStructPresred}
	\begin{split}
		\underset{\stateOptDummyRed\in\RR{\nstateRed}
	}{\text{minimize}}&\
			\norm{\weightingMatrixArgs{\stateInit(\params) +
	\podstate \stateRedOptDummy}{\params}\resArgs{n}{\stateInit(\params) +
\podstate \stateRedOptDummy}{\params}}_2\\
	\text{subject to}&\ \resDecompArgs{n}{\stateInit(\params) +
	\podstate \stateRedOptDummy}{\params}=0.
	\end{split}
\end{align} 
We now provide a finite-volume interpretation of the proposed model, define
the feasible set, and provide an algebraic description of the 
solution.
\begin{remark}[Conservative LSPG ROM O$\Delta$E: interpretation for Adams
	methods]\label{rem:lspgODeltaEstructPres}
	From Remark \ref{rem:lspgODeltaE}, the conservative LSPG ROM
O$\Delta$E
\eqref{eq:LSPGODeltaEStructPres} can be interpreted as minimizing the 
sum of squared
(normalized) \emph{violation of conservation} across all variables
$\conservedQuantity{i}$, $i\innat{\nConservation}$ and control volumes
$\controlVolArg{j}$, $j\innat{\nConservation}$ over time
interval $[\timeArg{n-1},\timeArg{n}]$ 
subject to the enforcement of conservation 
of all variables
$\conservedQuantity{i}$, $i\innat{\nConservation}$ over subdomains
$\subdomainArg{j}$, $j\innat{\nSubdomains}$ and time
interval $[\timeArg{n-1},\timeArg{n}]$
under two approximations: (1) the flux and source terms are
approximated using the finite-volume discretization (i.e.,
$\fluxVecArg{i}\leftarrow\fluxApproxVecArg{i}$, and 
$\sourceEntry{i}\leftarrow\sourceApproxEntry{i}$), and (2) a polynomial
interpolation is used to approximate the integrand for time integration.
\end{remark}

\begin{definition}[Feasibility of conservative LSPG projection]
Problem \eqref{eq:LSPGODeltaEStructPresred} is feasible if the
LSPG feasible set $\feasibleSetLSPGArgs{n}{\params}$, defined as
\begin{equation}
\feasibleSetLSPGArgs{n}{\paramsDummy}
\defeq
\{\velocityOptDummyTwo\in\RR{\nstateRed}\,|\,\resDecompArgs{n}{\stateInit(\params) +
	\podstate \velocityOptDummyTwo}{\params}=0 \},
			\end{equation}
			is non-empty.
\end{definition}

\begin{proposition}
	If Problem \eqref{eq:LSPGODeltaEStructPresred} is feasible, then a solution
	exists 
and satisfies the nonlinear saddle-point problem
\begin{align}\label{eq:LSPGKKT}
	\begin{split}
		\testBasisArgs{n}{\stateRedTime{n}}{\params}^T\left[\weightingMatrixT\weightingMatrix\resArgs{n}{\stateInit(\params) +
\podstate \stateRedTime{n}}{\params}
+\meshMapping^T\lagrangeMultipliersLSPGSolveTime{n}\right]
	&=\zero\\
\resDecompArgs{n}{\stateInit(\params) +
	\podstate \stateRedTime{n}}{\params}
	&=\zero,
	\end{split}
\end{align}
where $\lagrangeMultipliersLSPGSolveTime{n}\in\RR{\ndofDecomp}$ denote
Lagrange multipliers.
\end{proposition}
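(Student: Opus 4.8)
The plan is to establish existence of a minimizer of \eqref{eq:LSPGODeltaEStructPresred} and then identify \eqref{eq:LSPGKKT} as its first-order necessary (Karush--Kuhn--Tucker) conditions. For existence, I would observe that the LSPG feasible set $\feasibleSetLSPGArgs{n}{\params}$ is closed, being the preimage of $\{\zero\}$ under the continuous map $\stateRedOptDummy\mapsto\resDecompArgs{n}{\stateInit(\params)+\podstate\stateRedOptDummy}{\params}$, and nonempty by hypothesis; since the objective $\stateRedOptDummy\mapsto\norm{\weightingMatrixArgs{\stateInit(\params)+\podstate\stateRedOptDummy}{\params}\resArgs{n}{\stateInit(\params)+\podstate\stateRedOptDummy}{\params}}_2$ is continuous and nonnegative, a minimizing sequence over the feasible set that stays bounded admits a convergent subsequence whose limit is a minimizer. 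In contrast to the conservative Galerkin problem \eqref{eq:galOptimalityStructPres2} --- a convex least-squares problem with linear equality constraints --- problem \eqref{eq:LSPGODeltaEStructPresred} is generally nonconvex with nonlinear constraints, which is why only existence (not uniqueness) is asserted, and why the boundedness of minimizing sequences (equivalently, coercivity of the objective over $\feasibleSetLSPGArgs{n}{\params}$) is the implicit regularity that must be assumed.

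To characterize the solution, I would introduce the Lagrangian
\begin{equation*}
\lagrangianLSPGArgs{n}{\stateRedOptDummy}{\lagrangeMultipliersDummy}{\params}\defeq
\frac{1}{2}\norm{\weightingMatrixArgs{\stateInit(\params)+\podstate\stateRedOptDummy}{\params}\resArgs{n}{\stateInit(\params)+\podstate\stateRedOptDummy}{\params}}_2^2
+\lagrangeMultipliersDummy^T\resDecompArgs{n}{\stateInit(\params)+\podstate\stateRedOptDummy}{\params},\quad\lagrangeMultipliersDummy\in\RR{\ndofDecomp},
\end{equation*}
and invoke the standard first-order necessary conditions for smooth equality-constrained minimization: at a minimizer $\stateRedTime{n}$ at which a constraint qualification holds --- full row rank of the constraint Jacobian suffices --- there exists a multiplier vector $\lagrangeMultipliersLSPGSolveTime{n}$ such that $\partial\lagrangianLSPG/\partial\stateRedOptDummy$ and $\partial\lagrangianLSPG/\partial\lagrangeMultipliersDummy$ both vanish at $(\stateRedTime{n},\lagrangeMultipliersLSPGSolveTime{n})$. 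The $\lagrangeMultipliersDummy$-stationarity condition is exactly feasibility, $\resDecompArgs{n}{\stateInit(\params)+\podstate\stateRedTime{n}}{\params}=\zero$, i.e., the second row of \eqref{eq:LSPGKKT}. For the first row I would differentiate in $\stateRedOptDummy$ using two facts already available in the paper: (i) the test-basis definition \eqref{eq:testBasisDef} gives $\partial[\resArgs{n}{\stateInit(\params)+\podstate\stateRedOptDummy}{\params}]/\partial\stateRedOptDummy=\testBasisArgs{n}{\stateRedOptDummy}{\params}$, so --- the weighting matrix being independent of $\stateRedOptDummy$ --- the objective gradient equals $\testBasisArgs{n}{\stateRedOptDummy}{\params}^T\weightingMatrixT\weightingMatrix\resArgs{n}{\stateInit(\params)+\podstate\stateRedOptDummy}{\params}$, consistent with the unconstrained stationarity condition \eqref{eq:LSPGODeltaEStationarity}; and (ii) the decomposed O$\Delta$E residual in \eqref{eq:resLinMultiSolveDecomp} is, by construction, $\meshMapping$ applied to the linear-multistep residual \eqref{eq:resLinMulti} (a consequence of the identity between the decomposed velocity $\velocityDecompSymbol$ and $\meshMapping\velocity$ established in Section \ref{sec:DD}), so its Jacobian in $\stateRedOptDummy$ is $\meshMapping\testBasisArgs{n}{\stateRedOptDummy}{\params}$. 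Adding the two contributions and factoring out $\testBasisArgs{n}{\stateRedTime{n}}{\params}^T$ at the minimizer yields precisely the first row of \eqref{eq:LSPGKKT}.

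The routine chain-rule computation above is not where the difficulty lies; the main obstacle is that, because \eqref{eq:LSPGODeltaEStructPresred} is nonconvex with nonlinear constraints, neither attainment of the minimum nor the constraint qualification needed to pass from a minimizer to \eqref{eq:LSPGKKT} comes for free --- both were automatic in the conservative Galerkin case. I would therefore state explicitly the regularity hypotheses: boundedness of a minimizing sequence (or coercivity of the objective over $\feasibleSetLSPGArgs{n}{\params}$) for existence, and full row rank of $\meshMapping\testBasisArgs{n}{\stateRedTime{n}}{\params}$ for the KKT characterization. The latter is mild: since $\meshMapping\testBasisArgs{n}{\stateRedTime{n}}{\params}=\alpha_0\meshMapping\podstate+\mathcal{O}(\dt)$ by \eqref{eq:testBasisDef}, it inherits full row rank from $\meshMapping\podstate$ whenever $\dt$ is small enough, and $\meshMapping\podstate$ having full row rank is closely related to the sufficient feasibility condition of Proposition \ref{prop:suffLSPG}.
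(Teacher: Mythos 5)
Your proposal takes essentially the same route as the paper: define the Lagrangian with multiplier $\lagrangeMultipliersDummy\in\RR{\ndofDecomp}$, invoke the first-order necessary optimality conditions at the minimizer, and use the test-basis definition \eqref{eq:testBasisDef} together with $\resDecompTime{n}=\meshMapping\resTime{n}$ to identify these conditions with \eqref{eq:LSPGKKT}. The only difference is that you spell out the existence argument (closed feasible set plus a boundedness/coercivity assumption) and the constraint qualification (full row rank of $\meshMapping\testBasisArgs{n}{\stateRedTime{n}}{\params}$) that the paper's terse proof leaves implicit, which is a refinement of, not a departure from, its argument.
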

\begin{proof}
Defining the Lagrangian associated with problem  \eqref{eq:LSPGODeltaEStructPres} as 
\begin{equation}
	\lagrangianLSPGArgs{n}{\stateOptDummyRed}{\lagrangeMultipliersDummy}{\params}\defeq
	\frac{1}{2}\norm{\weightingMatrixArgs{\stateInit(\params) +
\podstate\stateOptDummyRed}{\params}\resArgs{n}{\stateInit(\params) +
\podstate\stateOptDummyRed}{\params}}_2^2 + \lagrangeMultipliersDummy^T\resDecompArgs{n}{\stateInit(\params) + \podstate\stateOptDummyRed}{\params},
\end{equation}
the solution $(\stateRedTime{n},\lagrangeMultipliersLSPGSolveTime{n})$ satisfies the
first-order necessary optimality conditions associated with problem
\eqref{eq:LSPGODeltaEStructPres}, i.e.,
$
\partial\lagrangianLSPGTime{n}/\partial\stateOptDummyRed(\stateRedTime{n},\lagrangeMultipliersLSPGSolveTime{n};\params) = \zero$ and 
${\partial\lagrangianLSPGTime{n}}/{\partial\lagrangeMultipliersDummy}(\stateRedTime{n},\lagrangeMultipliersLSPGSolveTime{n};\params) = \zero
$, which---using the definition of the test basis in Eq.~\eqref{eq:testBasisDef}---are equivalent to Eqs.~\eqref{eq:LSPGKKT}.
\end{proof}

Any appropriate optimization algorithm could be applied to  solve minimization
problem \eqref{eq:LSPGODeltaEStructPres} characterizing the
conservative LSPG ROM at each time instance. In this work, we
propose solving problem \eqref{eq:LSPGODeltaEStructPres} using the sequential
quadratic programming (SQP) method with the
Gauss--Newton Hessian approximation. This amounts to applying Newton's
method (with globalization) to the first-order necessary optimality conditions
\eqref{eq:LSPGKKT} and neglecting the term involving differentiation of the
test basis $\testBasisArgs{n}{\stateRedTime{n}}{\params}$. After choosing an
initial guess $\stateRedTime{n(0)}$, this approach leads to the following 
iterations for $k=0,\ldots, K$ 
\begin{align} \label{eq:LSPGODeltaEStructPresSaddle}
\begin{split}
	&\begin{bmatrix}
		\testBasisArgs{n}{\stateRedTime{n(k)}}{\params}^T\weightingMatrixT\weightingMatrix
		\testBasisArgs{n}{\stateRedTime{n(k)}}{\params}
		&\testBasisArgs{n}{\stateRedTime{n(k)}}{\params}^T\meshMapping^T\\
		\meshMapping\testBasisArgs{n}{\stateRedTime{n(k)}}{\params} & \zero
	\end{bmatrix}
	\begin{bmatrix}
		\delta\stateRedTime{n(k)} \\
		\delta\lagrangeMultipliersLSPGSolveTime{n(k)}
	\end{bmatrix}\\
	&\qquad=
	-
	\begin{bmatrix}
		\testBasisArgs{n}{\stateRedTime{n(k)}}{\params}^T\left(\weightingMatrixT\weightingMatrix
			\resArgs{n}{\stateInit(\params) +
		\podstate \stateRedTime{n(k)}}{\params}
	+\meshMapping^T\lagrangeMultipliersLSPGSolveTime{n(k)}\right)\\
\resDecompArgs{n}{\stateInit(\params) +
	\podstate \stateRedTime{n(k)}}{\params}
	\end{bmatrix}.
\end{split} 
\end{align}
\begin{equation}
\begin{bmatrix}
	\stateRedTime{n(k+1)}\\
	\lagrangeMultipliersLSPGSolveTime{n(k+1)}
\end{bmatrix}
= 
\begin{bmatrix}
	\stateRedTime{n(k)}\\
	\lagrangeMultipliersLSPGSolveTime{n(k)}
\end{bmatrix}
 + 
\linesearchParam{n(k)}
\begin{bmatrix}
	\delta\stateRedTime{n(k)}\\
	\delta\lagrangeMultipliersLSPGSolveTime{n(k)}
\end{bmatrix},
\end{equation} 
where
$\linesearchParam{n(k)}\in\RR{}$ is the step length that can be chosen, e.g., to
satisfy the strong Wolfe conditions to ensure global convergence to a local
solution of \eqref{eq:LSPGODeltaEStructPresred}.
		 
\subsection{Handling infeasibility}\label{sec:infeasible}

Of course, the optimization problems characterizing conservative
Galerkin projection (i.e., problems
\eqref{eq:galOptimalityStructPres}--\eqref{eq:galOptimalityStructPres2}) and
conservative LSPG projection  (i.e., problems
\eqref{eq:LSPGODeltaEStructPres}--\eqref{eq:LSPGODeltaEStructPresred}) may not
be feasible for arbitrary decomposed meshes $\meshDecomp$ and reduced basis
matrices $\podstate$. For example, if the decomposed mesh corresponds to the
original mesh (i.e., $\meshDecomp=\mesh$) and the reduced basis is
low-dimensional (i.e., $\nstateRed\ll\ndof$), then the constraints in these
problems correspond to \textit{exactly} satisfying the full-order-model
equations over a low-dimensional subspace; it is likely impossible to do so.

In practice, infeasibility of a given model can be detected by identifying that
the feasible set is empty. In the case of conservative Galerkin
projection, this occurs at a given time instance
$\timeVar^\timestepit$ and parameter instance $\params$ if $\feasibleSetGalArgs{\stateInit(\params) +
\podstate\stateRedTime{n}}{\timeVar^n}{\params}=\emptyset$, which implies that 
$\velocityDecompArgs{\stateInit(\params) +
\podstate\stateRedTime{n}}{\timeVar^n}{\params}\not\in\range{\meshMapping\podstate}$.
Similarly, in the case of conservation-preserving LSPG projection,
infeasibility is detected if a given time instance
$\timeVar^\timestepit$ and parameter instance $\params$ yield $
\feasibleSetLSPGArgs{n}{\params}
=\emptyset$, which implies that 
no value of $\velocityOptDummyTwo$ can set $\resDecompArgs{n}{\stateInit(\params) +
	\podstate \velocityOptDummyTwo}{\params}$ to zero.  
We now describe two approaches for handling the case where infeasibility is
detected.
\vspace{0.15cm}

\begin{enumerate}[leftmargin=*]
	\item\label{infeas:coarsen} \textbf{Coarsen the decomposed mesh}.  First, the number of
constraints can be reduced by coarsening the decomposed mesh, i.e., replace
$\meshDecomp$ by another decomposed mesh characterized by fewer subdomains
$\nsubdomains$. As this reduces the number of constraints, the likelihood of
feasibility increases, although feasibility remains not guaranteed. This
procedure can be repeated until the decomposed mesh leads to a nonempty
feasible set or a decomposed mesh characterized by one subdomain ($\nsubdomains=1$) is infeasible. 

If a decomposed mesh leading to feasibility
is constructed via coarsening, the conservative reduced-order model
can be redefined  using the new decomposed mesh and the reduced-order-model
simulation can be either (1) reinitialized and restarted from $t = 0$, or (2)
resumed from the time instance $t^n$ where infeasibility was detected. The
first approach facilitates analysis, as the reduced-order-model trajectory
association with a fixed decomposed mesh, while the latter precludes the need
to re-simulate any part of the time interval. Further, if the new decomposed
mesh is a decomposition of the previous
decomposed mesh, and the previous decomposed mesh is non-overlapping, then conservation
over the new decomposed mesh holds over the first part of the time interval
(see Theorem \ref{thm:conservationCoarse}).
We note that this approach is not guaranteed to ensure feasibility, as it is
possible for infeasibility to exist even in the case of  $\nsubdomains=1$.
\vspace{0.15cm}

\item \label{infeas:penalty}\textbf{Penalty formulation}.
Alternatively, infeasibility can be addressed by including the constraints in
the objective function via penalization. 
In the case of conservative Galerkin projection, problem \eqref{eq:galOptimalityStructPres2}
is reformulated as
\begin{align} \label{eq:GalerkinPenalty}
	\begin{split}
		\underset{\velocityOptDummyRed\in\RR{\nstateRed}}{\text{minimize}}&\ 
		\norm{
	\resGalArgs{\podstate\velocityOptDummyRed}{\timeVar}{\stateInit + \podstate\stateRed}{\params}
}_2^2+\penaltyParam
\norm{\meshMapping\resGalArgs{\podstate\velocityOptDummyRed}{\timeVar}{\stateInit +
\podstate\stateRed}{\params}}_2^2,
	\end{split}
\end{align} 
while the conservative LSPG projection problem \eqref{eq:LSPGODeltaEStructPresred} is reformulated as
\begin{align} \label{eq:LSPGABPenalty}
	\begin{split}
		\underset{\stateOptDummyRed\in\RR{\nstateRed}
	}{\text{minimize}}&\
			\norm{\weightingMatrixArgs{\stateInit(\params) +
	\podstate \stateRedOptDummy}{\params}\resArgs{n}{\stateInit(\params) +
\podstate \stateRedOptDummy}{\params}}_2^2+\penaltyParam
\norm{\resDecompArgs{n}{\stateInit(\params) +
\podstate \stateRedOptDummy}{\params}}_2^2,
	\end{split}
\end{align} 
where $\penaltyParam\in\RRplus{}$ is a penalty parameter. This
approach does not enforce conservation over any subdomain of the problem.
\end{enumerate}

\subsection{Hyper-reduction}\label{sec:hyperreduction2}

To enable hyper-reduction for the proposed conservative reduced-order models,
in addition to approximating the nonlinear objective functions 
that appear in optimization
problems \eqref{eq:galOptimalityStructPres2} and
\eqref{eq:LSPGODeltaEStructPresred}
as previously described in Section
\ref{sec:hyperreduction}, we must also approximate the nonlinear constraints
$\meshMapping\resGalArgs{\podstate\velocityOptDummyRed}{\timeVar}{\stateInit +
	\podstate\stateRed}{\params}=\zero$ and $\resDecompArgs{n}{\stateInit(\params) +
	\podstate \stateRedOptDummy}{\params}=\zero$. 
	 To accomplish this, we propose
	applying 
hyper-reduction to 
the nonlinear residuals that appears in the constraints, i.e., the
constraints become
\begin{equation}
\meshMapping\resGalHyperDecompArgs{\podstate\velocityOptDummyRed}{\timeVar}{\stateInit +
\podstate\stateRed}{\params}=\zero\quad \text{and}\quad \meshMapping\resHyperDecompArgs{n}{\stateInit(\params) +
	\podstate \stateRedOptDummy}{\params}=\zero
\end{equation}	
for conservative Galerkin and LSPG projection, respectively. Here,
approximations
$\resGalHyperDecomp(\approx\resGal)$ and 
$\resHyperDecompTime{\timestepit}(\approx\resTime{\timestepit})$ can be
constructed using any of the approaches described in Section
\ref{sec:hyperreduction}; we note that, in general,
different approximations can be employed for the objective and constraints
such that  $\resGalHyperDecomp\neq\resGalHyper$ and
$\resHyperDecompTime{\timestepit}\neq\resHyperTime{\timestepit}$.

In addition to the two forms of hyper-reduction introduced in Section
\ref{sec:hyperreduction}, we also propose a third type that leverages the
underlying finite-volume discretization of the governing equations:
\begin{enumerate}[leftmargin=*]
	\item[3]\label{hyper:fluxSource} \textbf{Flux and source hyper-reduction}. This approach
	respects the underlying decomposition of the velocity
	vector. It adopts the same residual approximation
	\eqref{eq:resHyperCont}--\eqref{eq:resHyperDisc} as velocity hyper-reduction
	(approach
	\ref{hyper:velocity} in Section \ref{sec:hyperreduction}), but employs separate approximations for each term
	comprising the velocity, i.e.,
\begin{equation} 
\velocityHyper=
\velocitySourceHyper
 + 
\velocityFluxHyper,\quad
\velocityFluxHyper =
	\alignment\faceFluxHyperVec
\end{equation} 
where
\begin{equation}
	\velocitySourceHyper=
	\podsource\pinv{\sampleMatsource\podsource}\sampleMatsource\velocitySource,
\quad
	\faceFluxHyperVec=\podflux\pinv{\sampleMatflux\podflux}\sampleMatflux\faceFluxVec
\end{equation}
in the case of gappy POD, or 
\begin{equation}
	\velocitySourceHyper =
	\sampleMatsource^T\sampleMatsource\velocitySource,
\quad
	\faceFluxHyperVec=\sampleMatflux^T\sampleMatflux\faceFluxVec
\end{equation}
in the case of collocation.
Here, $\sampleMatsource\in\{0,1\}^{\nsamplessource\times
	\ndof}$ and 
	$\sampleMatflux\in\{0,1\}^{\nsamplesflux\times
\ndofFace}$
	denote sampling matrices comprising selected rows of the
	identity matrix, while
	$\podsource\in\RRstar{\nstate\times\nsourceRed}$ and 
	$\podflux\in\RRstar{\ndofFace\times\nfluxRed}$
	denote reduced-basis
	matrices constructed for the source and flux, respectively.
 \end{enumerate}

One can consider a hierarchy of models that employ objective functions
and constraints, each of which may or may not employ one of the three proposed
hyper-reduction techniques. For this purpose,
we define the Tier-1 and Tier-2 Galerkin and LSPG objective functions as
\begin{gather}
	\objGalOneArgs{\velocityOptDummyRed}{t}{\params}\defeq	
	\norm{
	\resGalArgs{\podstate\velocityOptDummyRed}{\timeVar}{\stateInit + \podstate\stateRed}{\params}
}_2^2,\quad
\objGalTwoArgs{\velocityOptDummyRed}{t}{\params}\defeq	\norm{
	\resGalHyperArgs{\podstate\velocityOptDummyRed}{\timeVar}{\stateInit + \podstate\stateRed}{\params}
}_2^2,\\
\objLSPGOneArgs{\stateOptDummyRed}{\params}\defeq\norm{\weightingMatrixArgs{\stateInit(\params) +
	\podstate \stateRedOptDummy}{\params}\resArgs{n}{\stateInit(\params) +
\podstate \stateRedOptDummy}{\params}}_2^2,\quad
	\objLSPGTwoArgs{\stateOptDummyRed}{\params}\defeq\norm{\resHyperArgs{n}{\stateInit(\params) +
	\podstate \stateRedOptDummy}{\params}}_2^2,
\end{gather}
and the Tier-0 (unconstrained), Tier-1, and Tier-2 Galerkin and LSPG constraints as 
\begin{gather}
	\constraintGalGenArgs{\velocityOptDummyRed}{t}{\params}{0}\defeq \zero,\ \ \constraintGalOneArgs{\velocityOptDummyRed}{t}{\params}\defeq	\meshMapping
	\resGalArgs{\podstate\velocityOptDummyRed}{\timeVar}{\stateInit + \podstate\stateRed}{\params},\ \
	\constraintGalTwoArgs{\velocityOptDummyRed}{t}{\params}\defeq	\meshMapping
	\resGalHyperDecompArgs{\podstate\velocityOptDummyRed}{\timeVar}{\stateInit +
	\podstate\stateRed}{\params}\\
	\constraintLSPGGenArgs{\stateOptDummyRed}{\params}{0}\defeq\zero,\ \
\constraintLSPGOneArgs{\stateOptDummyRed}{\params}\defeq\resDecompArgs{n}{\stateInit(\params) +
	\podstate \stateRedOptDummy}{\params},\ \
\constraintLSPGTwoArgs{\stateOptDummyRed}{\params}\defeq\meshMapping\resHyperDecompArgs{n}{\stateInit(\params) +
	\podstate \stateRedOptDummy}{\params},
\end{gather}
Then, we say the Tier A-B Galerkin ROM solution $\frac{d\stateRed}{d\timeVar}$
is the solution to
\begin{align} \label{eq:GalerkinAB}
	\begin{split}
		\underset{\velocityOptDummyRed\in\RR{\nstateRed} }{\text{minimize}}\
		\objGalGenArgs{\velocityOptDummyRed}{t}{\params}{A} \ \text{subject to}\
		\constraintGalGenArgs{\velocityOptDummyRed}{t}{\params}{B} =\zero
	\end{split}
\end{align} 
and the Tier A-B LSPG ROM solution $\stateRedTime{n}$ is the solution to
\begin{align} \label{eq:LSPGAB}
	\begin{split}
		\underset{\stateOptDummyRed\in\RR{\nstateRed} }{\text{minimize}}\
		\objLSPGGenArgs{\stateOptDummyRed}{\params}{A} \ \text{subject to}\
		\constraintLSPGGenArgs{\stateOptDummyRed}{\params}{B} =\zero.
	\end{split}
\end{align} 
Note that Tier $i$-0 models correspond to the (original) unconstrained models, 
Tier $i$-1 models enforce conservation over subdomains, and 
Tier $i$-2 models enforce \textit{approximate} conservation over subdomains.
The penalty-method variants of the Tier A-B Galerkin and LSPG ROMs are,
respectively,
\begin{gather} \label{eq:GalerkinABPenalty}
		\underset{\velocityOptDummyRed\in\RR{\nstateRed} }{\text{minimize}}\
		\objGalGenArgs{\velocityOptDummyRed}{t}{\params}{A} + \penaltyParam\norm{
		\constraintGalGenArgs{\velocityOptDummyRed}{t}{\params}{B}}_2^2\\
\label{eq:LSPGABPenalty}
		\underset{\stateOptDummyRed\in\RR{\nstateRed} }{\text{minimize}}\
		\objLSPGGenArgs{\stateOptDummyRed}{\params}{A} +\penaltyParam\norm{
		\constraintLSPGGenArgs{\stateOptDummyRed}{\params}{B}}_2^2.
\end{gather} 
\begin{remark}[Computational cost of evaluating the objective function and
constraints]
	We note that the computational cost incurred by evaluating the constraints
	is often significantly lower than the cost of evaluating the objective
	function. For example, for a linear or zero source term, the only
	nonlinear contribution to the constraints arises from the face
	flux along the boundary of the subdomains comprising the decomposed mesh.
	For a small number of subdomains (e.g., global conservation with $\nSubdomains =1$), this requires computing only a small number of the
	elements of the face-flux vector $\faceFluxVec$, even without
	hyper-reduction. Thus, applying hyper-reduction to the objective function is
	generally more important for computational-cost reduction than applying
	hyper-reduction to the constraints, i.e., Tier 2--1 ROMs may be preferable
	to Tier 2--2 ROMs, as their cost is often similar and the former
	strictly enforces conservation.
\end{remark}

\subsection{Snapshot-based training}\label{sec:training}
Here, we propose to construct the reduced-basis matrices 
$\podstate$, $\podres$,
$\podvelocity$, $\podflux$, and $\podsource$
\reviewerB{during the offline stage}
using proper orthogonal decomposition (POD). In particular, given a set of training parameter instances
$\paramDomainTrain \defeq \{\paramTrain{1}, \ldots,
\paramTrain{\ntrain}\}\subset\paramDomain$,
we execute training simulations from which we compute `data tensors'
\begin{align}
	&\tensor_{ijk}\defeq
\stateEntry{i}(t^j;\paramTrain{k})-\stateInitEntry{i}(\paramTrain{k}),\quad
i\innat{\nspacedof},\ j\innat{\ntimedof},\ k\innat{\ntrain}\\
&\tensorres_{ijk\ell}(\stateDummy)\defeq
\resEntryTime{i}{j}(\stateDummy^{j(\ell)};\paramTrain{k}),\quad
i\innat{\nspacedof},\ j\innat{\ntimedof},\ k\innat{\ntrain},\
\ell\innat{\kmax{j}{\stateDummy}{\paramTrain{k}}}\\
&\tensorvelocity_{ijk}(\stateDummy)\defeq
\velocityEntry{i}(\stateDummy^{j},\timeArg{j};\paramTrain{k}),\quad
i\innat{\nspacedof},\ j\innat{\ntimedof},\ k\innat{\ntrain}\\
&\tensorflux_{ijk}(\stateDummy)\defeq\faceFluxEntryArgs{i}{\stateDummy^j}{\timeArg{j}}{\paramTrain{k}},\quad
i\innat{\ndofFace},\ j\innat{\ntimedof},\ k\innat{\ntrain}\\
&\tensorsource_{ijk}(\stateDummy)\defeq
\velocityFluxEntryArgs{i}{\stateDummy^j}{\timeArg{j}}{\paramTrain{k}},\quad
i\innat{\nspacedof},\ j\innat{\ntimedof},\ k\innat{\ntrain}.
\end{align}
Here, a superscript $j(\ell)$ denotes the value of a variable at the $\ell$th
Newton(-like) iteration during the solution of its nonlinear O$\Delta$E at
time instance $\timeArg{j}$ and $\kmax{j}{\stateDummy}{\params}$ denotes the
maximum number of Newton(-like) iterations taken during the simulation of
solution $\stateDummy$ at time instance $\timeArg{j}$ and parameter instance
$\params$. 

Note that constructing the state tensor $\tensor$ requires solving
the full-order model \eqref{eq:fom} at training instances
$\params\in\paramDomainTrain$, while constructing the other tensors requires
computing the solution $\stateDummy$ at these parameter instances;
$\stateDummy$ can correspond to the full-order model state (i.e., $\stateDummy =
\state$) or 
Tier A-B reduced-order model states (i.e., $\stateDummy = \stateApprox$) for
$\mathrm{A}\in\{1,2\}$ and $\mathrm{B}\in\{0,1\}$.
Clearly, the least computationally expensive approach is
to employ either $\stateDummy=\state$---as the training full-order-model
simulations are already required to construct the state tensor $\tensor$---or
$\stateDummy=\stateApprox$ corresponding to the Tier 2-B model for
$\text{B}\in\{0,1\}$, as the hyper-reduced objective function reduces the
simulation cost significantly.

The reduced-basis matrix associated with each data tensor can be computed as
the dominant left singular vectors of its mode-1 unfolding; for example, the
state basis $\podstate\equiv[\podstateVec{1}\ \cdots\
\podstateVec{\nstateRed}]$ is computed as 
 \begin{gather} \label{eq:spatialSVD}
	 \tensorUnfold{1} \defeq
	 \bmat{\snapshots(\paramTrain{1}) &
\ldots & \snapshots(\paramTrain{\ntrain})}
	 = \leftsingmatstate \singvalmatstate
	\rightsingmatstate^T\in\RR{\ndof\times\ntimedof\ntrain}\\
	\podstateVec{i} = \leftsingvecstatei{i},\quad i\innat\nstateRed,
 \end{gather} 
 where $\snapshots(\param)\defeq[\stateArg{1}(\params)\ \cdots\
 \stateArg{\ntimedof}(\params)]$ is often referred to as the
 `snapshot matrix'.
		 
Further, we propose to construct the sampling matrices $\sampleMatres$,
$\sampleMatvelocity$, $\sampleMatflux$, and $\sampleMatsource$ using the
sample-mesh greedy method presented in Ref.~\cite{carlbergJCP}, which allows
for oversampling to enable least-squares regression via gappy POD and also
constructs a `sample mesh' wherein all residual elements associated with a
given control volume are sampled. However, rather than constructing each of
these sampling matrices independently, we propose to construct $\sampleMatres$
according to the greedy method executed with basis $\podres$ and subsequently
set $\sampleMatvelocity = \sampleMatsource=\sampleMatres$. Further, we
construct $\sampleMatflux$ to select the faces associated with the control
volumes sampled by $\sampleMatres$; this corresponds to selecting the
sampling matrix $\sampleMatflux$ with the maximum number of rows such that
$\sampleMatres\alignment =
\sampleMatres\alignment\sampleMatflux^T\sampleMatflux$.

\section{Analysis}\label{sec:analysis}

This section performs analysis of the proposed conservative Galerkin and
conservative LSPG techniques. For simplicity, we focus on the models without
hyper-reduction; the hyper-reduced variants of the results can be derived in a
similar manner by making the obvious substitutions.

\subsection{Feasibility conditions}\label{sec:feasibility}
We first derive sufficient conditions under which the optimization problems
characterizing conservative Galerkin and conservative LSPG projection are
feasible.
\begin{proposition}[Sufficient conditions for feasibility of conservative
	Galerkin projection]\label{prop:sufficientFeasGal}
Problem \eqref{eq:galOptimalityStructPres2} is feasible if 
$\rank{\meshMapping\podstate}=\ndofDecomp$, which in turn requires
$\nstateRed\geq \ndofDecomp$, i.e., the number of reduced basis vectors 
exceeds the number of constraints.
\end{proposition}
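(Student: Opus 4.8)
The plan is to recast feasibility as the consistency of a finite-dimensional linear system and then read the conclusion off the rank hypothesis. First I would unfold the definition of the Galerkin feasible set $\feasibleSetGalArgs{\stateDummy}{\timeDummy}{\paramsDummy}$: using $\resGalArgs{\podstate\velocityOptDummyTwo}{\timeDummy}{\stateDummy}{\paramsDummy} = \podstate\velocityOptDummyTwo - \velocityArgs{\stateDummy}{\timeDummy}{\paramsDummy}$, the defining membership condition $\meshMapping\resGalArgs{\podstate\velocityOptDummyTwo}{\timeDummy}{\stateDummy}{\paramsDummy} = \zero$ is equivalent to the linear system
\begin{equation*}
	\meshMapping\podstate\,\velocityOptDummyTwo = \meshMapping\velocityArgs{\stateDummy}{\timeDummy}{\paramsDummy}
\end{equation*}
in the unknown $\velocityOptDummyTwo\in\RR{\nstateRed}$, which comprises $\ndofDecomp$ equations and whose right-hand side depends on the (fixed) state $\stateDummy$ but in any case belongs to $\RR{\ndofDecomp}$.

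Next I would observe that $\feasibleSetGalArgs{\stateDummy}{\timeDummy}{\paramsDummy}$ is non-empty if and only if this system is consistent, i.e., if and only if $\meshMapping\velocityArgs{\stateDummy}{\timeDummy}{\paramsDummy}\in\range{\meshMapping\podstate}$. The hypothesis $\rank{\meshMapping\podstate} = \ndofDecomp$ states precisely that the $\ndofDecomp\times\nstateRed$ matrix $\meshMapping\podstate$ has full row rank, so $\range{\meshMapping\podstate} = \RR{\ndofDecomp}$; hence the right-hand side lies in the range for every choice of $\stateDummy$, $\timeDummy$, $\paramsDummy$, the system is consistent, and the feasible set is non-empty. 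Specializing to the values $\stateDummy = \stateInit(\params) + \podstate\stateRed$, $\timeDummy = \timeVar$, $\paramsDummy = \params$ then yields feasibility of Problem \eqref{eq:galOptimalityStructPres2}. Finally, the rank of a matrix cannot exceed its number of columns, so $\rank{\meshMapping\podstate} = \ndofDecomp$ forces $\nstateRed \geq \ndofDecomp$, which is the claimed lower bound on the reduced-basis dimension.

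I do not anticipate a genuine obstacle, since the argument reduces to the standard fact that a linear system with a full-row-rank coefficient matrix is solvable for every right-hand side; the only point worth stating explicitly is that $\meshMapping\velocityArgs{\stateDummy}{\timeDummy}{\paramsDummy}$ can be essentially arbitrary in $\RR{\ndofDecomp}$, so consistency must be ensured structurally by the operator $\meshMapping\podstate$ rather than by any special property of the velocity. One could optionally append the remark that the condition is sufficient but not necessary: for a particular state $\stateDummy$ the image $\meshMapping\velocityArgs{\stateDummy}{\timeDummy}{\paramsDummy}$ may happen to lie in $\range{\meshMapping\podstate}$ even when $\rank{\meshMapping\podstate} < \ndofDecomp$.
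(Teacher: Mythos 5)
Your proposal is correct and follows essentially the same route as the paper: unfold the constraint into the linear system $\meshMapping\podstate\,\velocityOptDummyTwo = \meshMapping\velocityArgs{\stateDummy}{\timeDummy}{\paramsDummy}$ (the right-hand side being exactly $\velocityDecompArgs{\stateDummy}{\timeDummy}{\paramsDummy}$), and note that full row rank of $\meshMapping\podstate$ makes $\range{\meshMapping\podstate}=\RR{\ndofDecomp}$, so the system is consistent regardless of the arguments. Your explicit remark that $\rank{\meshMapping\podstate}\leq\nstateRed$ forces $\nstateRed\geq\ndofDecomp$ is a small, correct addition that the paper leaves implicit.
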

\begin{proof}
	If 
$\rank{\meshMapping\podstate}=\ndofDecomp$, then
$\range{\meshMapping\podstate} = \RR{\ndofDecomp}$
and thus
$\velocityDecompArgs{\stateDummy}{\timeDummy}{\paramsDummy}\in\range{\meshMapping\podstate}$
regardless of its arguments.
\end{proof}

\begin{proposition}[Sufficient conditions for feasibility of conservative
	LSPG projection]\label{prop:suffLSPG}
Problem \eqref{eq:LSPGODeltaEStructPresred} is feasible if 
(1) an explicit scheme is employed and
$\rank{\meshMapping\podstate}=\ndofDecomp$, 
(2) the limit $\dt\rightarrow 0$ is taken,
or (3) the velocity $\velocity$ is
linear in its first argument and 
$\rank{\meshMapping[\alpha_0\identity - \dt\beta_0
\partial\velocity/\partial\stateDummy(\cdot,\timeArg{\timestepit};\params)]\podstate}=\ndofDecomp
$.
\end{proposition}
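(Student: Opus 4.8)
The plan is to establish, in each of the three listed cases, that the LSPG feasible set $\feasibleSetLSPGArgs{n}{\params}$ is nonempty by exposing the \emph{structure} of the constraint map $\velocityOptDummyTwo\mapsto\resDecompArgs{n}{\stateInit(\params)+\podstate\velocityOptDummyTwo}{\params}=\meshMapping\,\resArgs{n}{\stateInit(\params)+\podstate\velocityOptDummyTwo}{\params}$, where the identity $\resDecompArgs{n}{\cdot}{\params}=\meshMapping\,\resArgs{n}{\cdot}{\params}$ follows from $\velocityDecomp=\meshMapping\velocity$. The unifying idea is that in cases~(1) and~(3) this map is \emph{affine} in $\velocityOptDummyTwo$: its image is an affine subspace of $\RR{\ndofDecomp}$, which contains $\zero$ precisely when its linear part is surjective onto $\RR{\ndofDecomp}$ --- and that surjectivity is exactly the stated rank condition (which, as in Proposition~\ref{prop:sufficientFeasGal}, forces $\nstateRed\geq\ndofDecomp$). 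In case~(2) the map is affine only in the limit $\dt\to0$, and I would instead produce an explicit feasible point using consistency of the multistep scheme.

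For case~(1), substituting $\beta_0=0$ into the residual \eqref{eq:resLinMulti} gives $\resArgs{n}{\stateInit(\params)+\podstate\velocityOptDummyTwo}{\params}=\alpha_0\podstate\velocityOptDummyTwo+\bds c$ with $\bds c$ independent of $\velocityOptDummyTwo$, so the constraint reads $\alpha_0\,\meshMapping\podstate\,\velocityOptDummyTwo=-\meshMapping\bds c$; since $\alpha_0\neq0$, the hypothesis $\rank{\meshMapping\podstate}=\ndofDecomp$ makes $\alpha_0\meshMapping\podstate$ surjective and the linear system solvable. For case~(3), writing $\velocityArgs{\stateDummy}{\timeArg n}{\params}=\bds A^n\stateDummy+\bds g^n$ with $\bds A^n\defeq\partial\velocity/\partial\stateDummy(\cdot,\timeArg n;\params)$ constant, the residual becomes $\resArgs{n}{\stateInit(\params)+\podstate\velocityOptDummyTwo}{\params}=(\alpha_0\identity-\dt\beta_0\bds A^n)\podstate\,\velocityOptDummyTwo+\bds c'$ with $\bds c'$ independent of $\velocityOptDummyTwo$, so the constraint $\meshMapping(\alpha_0\identity-\dt\beta_0\bds A^n)\podstate\,\velocityOptDummyTwo=-\meshMapping\bds c'$ is solvable as soon as $\rank{\meshMapping(\alpha_0\identity-\dt\beta_0\bds A^n)\podstate}=\ndofDecomp$. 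Both reductions are elementary linear algebra and closely parallel the proof of Proposition~\ref{prop:sufficientFeasGal}.

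For case~(2) I would exploit that the previous ROM iterates lie in the trial subspace, $\state^{n-j}(\params)=\stateInit(\params)+\podstate\stateRedTime{n-j}(\params)$. Substituting into \eqref{eq:resLinMulti} and collecting terms yields $\resDecompArgs{n}{\stateInit(\params)+\podstate\velocityOptDummyTwo}{\params}=\meshMapping\podstate\bigl(\alpha_0\velocityOptDummyTwo+\sum_{j=1}^k\alpha_j\stateRedTime{n-j}(\params)\bigr)+\bigl(\sum_{j=0}^k\alpha_j\bigr)\meshMapping\stateInit(\params)-\dt\,\bds d$, where $\bds d$ gathers the $\dt$-weighted velocity terms. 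Consistency of the linear multistep method gives $\sum_{j=0}^k\alpha_j=0$, killing the middle term; letting $\dt\to0$, the constraint converges to $\meshMapping\podstate\bigl(\alpha_0\velocityOptDummyTwo+\sum_{j=1}^k\alpha_j\stateRedTime{n-j}(\params)\bigr)=\zero$, which is satisfied by $\velocityOptDummyTwo^\star\defeq-\alpha_0^{-1}\sum_{j=1}^k\alpha_j\stateRedTime{n-j}(\params)$ (recall $\alpha_0\neq0$). Hence the limiting feasible set is nonempty, and for $\dt$ small enough feasibility persists by a continuity argument.

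The step I expect to be the main obstacle is the rigor of case~(2): promoting ``feasible in the $\dt\to0$ limit'' to ``feasible for all sufficiently small $\dt$'' requires either taking the formal limit as the statement or invoking an implicit-function-theorem argument, which in turn reintroduces a full-rank hypothesis on the constraint Jacobian $\meshMapping(\alpha_0\identity-\dt\beta_0\,\partial\velocity/\partial\stateDummy)\podstate$ in a neighborhood of $\velocityOptDummyTwo^\star$. By contrast, once the affine structure is exposed, cases~(1) and~(3) are routine.
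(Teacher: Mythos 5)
Your proposal is correct and follows essentially the same route as the paper: in cases (1) and (3) the constraint is affine in the generalized coordinates and the stated rank condition makes the constraint matrix $\meshMapping\podstate$ (resp.\ $\meshMapping[\alpha_0\identity-\dt\beta_0\,\partial\velocity/\partial\stateDummy]\podstate$) surjective, while in case (2) the previous iterates lying in the trial subspace place the limiting right-hand side in $\range{\meshMapping\podstate}$, yielding the explicit feasible point $-\alpha_0^{-1}\sum_{j=1}^k\alpha_j\stateRedTime{n-j}$. Your worry about case (2) is moot relative to the paper, which likewise interprets ``the limit $\dt\rightarrow 0$'' as the formal limiting constraint rather than claiming feasibility for all sufficiently small $\dt$.
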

\begin{proof}
	\textit{Case 1.}
	If an explicit scheme is employed, then $\beta_0=0$ and the feasible set
	becomes
	\begin{equation}\label{eq:feasibleSetLSPGArgsExplicit}
\feasibleSetLSPGArgs{n}{\paramsDummy}
=
\{\velocityOptDummyTwo\in\RR{\nstateRed}\,|\,
	\alpha_0
 \meshMapping\podstate\unknown = 
	-\sum_{j=1}^k\alpha_j\meshMapping\podstate\stateRed^{\timestepit -j}(\params)+
\dt \sum_{j=1}^k\beta_j \velocityDecompArgs{\stateInit(\params) +
\podstate \stateRed^{\timestepit -j}}{t^{\timestepit -j}}{\params}\}.
			\end{equation}
If $\rank{\meshMapping\podstate}=\ndofDecomp$, then
$\range{\meshMapping\podstate} = \RR{\ndofDecomp}$
and right-hand-side of the constraints in \eqref{eq:feasibleSetLSPGArgsExplicit}
must lie in
$\range{\meshMapping\podstate}$.\\
	\noindent \textit{Case 2.}
	If the limit $\dt\rightarrow 0$ is taken, then the feasible set
	becomes
	\begin{equation}\label{eq:feasibleSetLSPGArgsLimit}
\feasibleSetLSPGArgs{n}{\paramsDummy}
=
\{\velocityOptDummyTwo\in\RR{\nstateRed}\,|\,
	\alpha_0
 \meshMapping\podstate\unknown = 
	-\sum_{j=1}^k\alpha_j\meshMapping\podstate\stateRed^{\timestepit -j}(\params)
\}
			\end{equation}
and 
the right-hand-side of the constraints in \eqref{eq:feasibleSetLSPGArgsLimit}
will
lie in
$\range{\meshMapping\podstate}$ regardless of its rank.\\
	\noindent \textit{Case 3.} 
	If the velocity is linear in the state, the feasible set becomes
	\reviewerB{
	\begin{align}\label{eq:feasibleSetLSPGArgsLinear}
			\begin{split}
\feasibleSetLSPGArgs{n}{\paramsDummy}
=
\{\velocityOptDummyTwo\in\RR{\nstateRed}\,|\,
	\meshMapping[&\alpha_0\podstate - \dt\beta_0
	\partial\velocity/\partial\stateDummy(\cdot,\timeArg{\timestepit};\params)\podstate]\unknown
	= \\
	&-\sum_{j=1}^k\alpha_j\meshMapping\podstate\stateRed^{\timestepit -j}(\params)+
\dt \sum_{j=1}^k\beta_j \velocityDecompArgs{\stateInit(\params) +
\podstate \stateRed^{\timestepit -j}}{t^{\timestepit -j}}{\params}\}
			\end{split}
			\end{align}
		}
\noindent and (as above) $\rank{\meshMapping[\alpha_0\identity - \dt\beta_0
\partial\velocity/\partial\stateDummy(\cdot,\timeArg{\timestepit};\params)]\podstate}=\ndofDecomp$
ensures
the right-hand-side of the constraints in \eqref{eq:feasibleSetLSPGArgsExplicit}
will lie in
$\range{\meshMapping\podstate}$.
\end{proof}

\subsection{Equivalence conditions}\label{sec:equivalence}
We now derive conditions under which conservative Galerkin and conservative
LSPG projection are equivalent.
\begin{theorem}[Equivalence]\label{thm:equivalent}
	The discrete-time conservative Galerkin ROM solution is
	equivalent to the conservative LSPG solution if either (1) an explicit scheme is employed
	or (2) the limit $\dt\rightarrow 0$ is taken. Further, under
	these conditions, the
	Lagrange multipliers are related as
\begin{equation} \label{eq:lagrangeLSPGlagrangeGal}
	\lagrangeMultipliersLSPGSolveTime{n}  =
	\sum_{j=0}^k\alpha_j\lagrangeMultipliersGalTime{\timestepit-j}.
\end{equation} 
\end{theorem}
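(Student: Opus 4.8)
The plan is to compare the conservative Galerkin O$\Delta$E \eqref{eq:consGalTimeDisc} directly with the first-order optimality conditions \eqref{eq:LSPGKKT} of the conservative LSPG ROM, exploiting the fact that under either hypothesis the LSPG test basis collapses to a scalar multiple of $\podstate$. Throughout I would work in the no-hyper-reduction setting of Section~\ref{sec:analysis}, so $\weightingMatrix=\identity$, and I would use the observation (already implicit in Section~\ref{sec:DD}) that the linear-multistep residual associated with the decomposed system \eqref{eq:decompFOM} is $\meshMapping$ applied to the linear-multistep residual \eqref{eq:resLinMulti}, i.e.\ $\resDecompArgs{n}{\unknown}{\params}=\meshMapping\resArgs{n}{\unknown}{\params}$, since $\velocityDecomp=\meshMapping\velocity$ and left-multiplication by the constant matrix $\meshMapping$ commutes with forming the residual.

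First I would record the key simplification of the test basis \eqref{eq:testBasisDef}. In case (1) an explicit scheme has $\beta_0=0$, so $\testBasisArgs{n}{\unknownRed}{\params}=\alpha_0\podstate$ exactly; in case (2) the extra term $\beta_0\dt\,\partial\velocity/\partial\stateDummy(\stateInit(\params)+\podstate\unknownRed,\timeArg{n};\params)\podstate$ is $O(\dt)$ and vanishes in the limit, so again $\testBasisArgs{n}{\unknownRed}{\params}\to\alpha_0\podstate$. Substituting $\testBasis^n=\alpha_0\podstate$ into \eqref{eq:LSPGKKT}, using $\podstate^T\podstate=\identity$ and dividing the first block by $\alpha_0\neq0$, the conservative-LSPG optimality system reduces to
\begin{align*}
	\podstate^T\resArgs{n}{\stateInit(\params)+\podstate\stateRedTime{n}}{\params}+\podstate^T\meshMapping^T\lagrangeMultipliersLSPGSolveTime{n}&=\zero,\\
	\meshMapping\resArgs{n}{\stateInit(\params)+\podstate\stateRedTime{n}}{\params}&=\zero.
\end{align*}

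Next I would rewrite the conservative Galerkin O$\Delta$E \eqref{eq:consGalTimeDisc} in exactly this form. Since the previous ROM states entering the residual are $\stateInit(\params)+\podstate\stateRedTime{\timestepit-j}$, and using the consistency relation $\sum_{j=0}^k\alpha_j=0$ (hence $\sum_{j=0}^k\alpha_j\stateInit(\params)=\zero$), $\podstate^T\podstate=\identity$, and $\velocityDecomp=\meshMapping\velocity$, the first block of \eqref{eq:consGalTimeDisc} rearranges to $\podstate^T\resArgs{n}{\stateInit(\params)+\podstate\stateRedTime{n}}{\params}+\podstate^T\meshMapping^T\sum_{j=0}^k\alpha_j\lagrangeMultipliersGalTime{\timestepit-j}=\zero$, and the second block rearranges to $\meshMapping\resArgs{n}{\stateInit(\params)+\podstate\stateRedTime{n}}{\params}=\zero$. (Alternatively one may invoke Theorem~\ref{sec:commutative}: the conservative Galerkin O$\Delta$E is obtained by time-discretizing the joint system \eqref{eq:fullJointODE} and then projecting, and conservative LSPG differs from that time-discrete Galerkin projection only through the test basis, which is the identity-scaled $\podstate$ here.)

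Finally I would match the two systems: they coincide under the identification $\lagrangeMultipliersLSPGSolveTime{n}=\sum_{j=0}^k\alpha_j\lagrangeMultipliersGalTime{\timestepit-j}$, which is precisely \eqref{eq:lagrangeLSPGlagrangeGal}. Since under either hypothesis $\resArgs{n}{\stateInit(\params)+\podstate\stateRedOptDummy}{\params}$ is affine in $\stateRedOptDummy$ and the constraint is linear, the conservative-LSPG problem is a convex quadratic program with linear equality constraints, so---when feasible---its solution is unique and fully characterized by the above conditions; hence the conservative Galerkin and conservative LSPG state solutions agree and the multipliers obey \eqref{eq:lagrangeLSPGlagrangeGal}. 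The step I expect to be the main obstacle is making case (2) precise, since there the equivalence is a limiting statement: I would argue that as $\dt\to0$ the two KKT systems differ only by the $O(\dt)$ perturbation of the test basis, so their (unique) solutions converge and agree in the limit. A minor bookkeeping point is to check that absorbing the affine shift $\stateInit(\params)$ through the identity $\sum_{j=0}^k\alpha_j=0$ is legitimate, which it is.
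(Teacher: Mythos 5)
Your proposal is correct and follows essentially the same route as the paper: rewrite the conservative Galerkin O$\Delta$E \eqref{eq:consGalTimeDisc} in residual form (using $\sum_{j}\alpha_j=0$ and $\podstate^T\podstate=\identity$), observe that under either hypothesis the LSPG test basis collapses to $\alpha_0\podstate$ so that \eqref{eq:LSPGKKT} reduces to the same system, and match the Lagrange multipliers via \eqref{eq:lagrangeLSPGlagrangeGal}. Your added remarks on uniqueness and on making the $\dt\to0$ case precise go slightly beyond what the paper records but do not change the argument.
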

\begin{proof}
	We first note that the discrete-time conservative Galerkin ROM solution
	$\stateRedGalTime{n}$ satisfies
	Eqs.~\eqref{eq:consGalTimeDisc}, which can be re-written as
	\begin{equation}\label{eq:consGalTimeDiscRewrite} 
	\begin{alignedat} {2}
		&\podstate^T\left[
		\resArgs{\timestepit}{\stateInit(\params) +\podstate\stateRedGalTime{\timestepit} }{\params} +
	\sum_{j=0}^k\alpha_j\meshMapping^T\lagrangeMultipliersGalTime{\timestepit-j}\right]
	&&= 
\zero\\
&
\resDecompArgs{\timestepit}{\stateInit(\params) +\podstate\stateRedGalTime{\timestepit} }{\params}
 &&=
\zero.
\end{alignedat} 
\end{equation} 
Comparing Eqs.~\eqref{eq:LSPGKKT} and \eqref{eq:consGalTimeDiscRewrite} reveals that the
discrete-time conservative LSPG and 
conservative Galerkin solutions are equivalent if (1) $
\weightingMatrixT\weightingMatrix\testBasisArgs{n}{\stateRedTime{n}}{\params}=
\constant\podstate
$  and (2) 
 $
\testBasisArgs{n}{\stateRedTime{n}}{\params}^T\meshMapping^T\lagrangeMultipliersLSPGSolveTime{n}=
\constant\sum_{j=0}^k\alpha_j\podstate^T\meshMapping^T\lagrangeMultipliersGalTime{\timestepit-j}
$ for any constant $\constant\in\RR{}$. As was shown in Ref.~\citep{carlbergGalDiscOpt}, the first condition holds 
for $\constant = \alpha_0$ if 
 either the
scheme 
is explicit (i.e., $\beta_0=0$) or the limit $\dt\rightarrow 0$ is taken.
The same conditions apply to the second condition above.  To see this, 
note that 
\begin{equation} \label{eq:MeshMappingRev}
\meshMapping\testBasisArgs{n}{\stateRedTime{n}}{\params} = 
\alpha_0 \meshMapping\podstate - \dt \beta_0\meshMapping
\frac{\partial\velocity}{\partial\stateDummy}(\unknown,\timeArg{\timestepit};\params)\podstate\reviewerB{.}
\end{equation} 
If either $\beta_0 = 0$ or the limit $\dt\rightarrow 0$ is taken, the second term
vanishes such that we have
\begin{equation} 
\testBasisArgs{n}{\stateRedTime{n}}{\params}^T\meshMapping^T \lagrangeMultipliersLSPGSolveTime{n}
= 
\alpha_0\podstate^T\meshMapping^T \lagrangeMultipliersLSPGSolveTime{n} .
\end{equation} 
This expression is equivalent to
$\constant\sum_{j=0}^k\alpha_j\podstate^T\meshMapping^T\lagrangeMultipliersGalTime{\timestepit-j}$
with $\constant = \alpha_0$
if Eq.~\eqref{eq:lagrangeLSPGlagrangeGal} holds.
\end{proof}

\subsection{Error analysis}\label{sec:error}

We now derive several \textit{a posteriori} error bounds for (components of)
the solution computed by the proposed conservative model-reduction methods.
We employ some of the same techniques used for error analysis in
Ref.~\cite{carlbergGalDiscOpt}. For notational simplicity, we drop dependence
of the operators on the parameters $\params$.

We begin by writing the discrete equations characterizing the full-order,
Galerkin, and LSPG models as 
\begin{align}
	\label{eq:FOMerror}\alpha^n_0\stateFOM{n} &= \beta_0^n\dt\f{\stateInit +
\stateFOM{n}}{\timeArg{n}} + \resFOMPrev\\
	\label{eq:Gal}\alpha^n_0\stateGal{n} &= \beta_0^n\dt\podstate^T\f{\stateInit +
\podstate\stateGal{n}}{\timeArg{n}} + \resGalPrev-
\sum_{j=0}^k\alpha^n_j\podstate^T\meshMapping^T\lagrangeMultipliersGalTime{\timestepit-j}
,
\\
\label{eq:LSPG}
\begin{split}
	\alpha^n_0\stateLSPG{n} &= \beta_0^n\dt(\testBasisAAShortnT\podstate)^{-1}\testBasisAAShortnT\f{\stateInit +
\podstate\stateLSPG{n}}{\timeArg{n}} + \resLSPGPrev \\
&- (\testBasisAAShortnT\podstate)^{-1}\testBasisAAShortnT\meshMapping^T\lagrangeMultipliersLSPGSolveTime{n},
\end{split}
\end{align}
respectively,
where $\testBasisAAShortn\defeq\testBasisArgsTwo{n}{\stateLSPG{n}}$,
as well as
\begin{align}
\label{eq:galDecomp}	\alpha^n_0\meshMapping\podstate\stateGal{n} &= \beta_0^n\dt\meshMapping\f{\stateInit +
\podstate\stateGal{n}}{\timeArg{n}} + \meshMapping\resPrev{\podstate\stateGal}\\
\label{eq:lspgDecomp}
	\alpha^n_0\meshMapping\podstate\stateLSPG{n} &= \beta_0^n\dt\meshMapping\f{\stateInit +
\podstate\stateLSPG{n}}{\timeArg{n}} +
\meshMapping\resPrev{\podstate\stateLSPG}
\end{align}
with  $\stateFOM{0} = \zero$ and $\stateGal{0}=\stateLSPG{0} = \zero$. Here, we have defined
 \begin{align} \label{eq:resPrevRev}
	 \resPrev{\stateArg}&\defeq\sum_{\ell=1}^k\left(\beta_\ell^n\dt\f{\stateInit+\stateArg{n-\ell}}{\timeArg{n-\ell}}-\alpha^n_\ell\stateArg{n-\ell}\right)\reviewerB{.}
 \end{align} 
We also assume Lipschitz continuity of
$\velocity$ in its first argument:
\begin{itemize}
\item[${\bf A_1}$] There exists a constant $\lipschitzConstant >0$ such that for $\bds x, \bds y \in \RR{\ndof}$
    \begin{equation*}
        \normtwo{\f {\state}{t} - \f{\bds y}{t}} \le \lipschitzConstant  \normtwo{\state - \bds y},\quad \forall t\in\timeDomain.
    \end{equation*}
\end{itemize}
To simplify notation, we define the Galerkin and LSPG operators as
\begin{equation}
\GalProj\defeq\podstate\podstate^T,\quad \LSPGProj\defeq\podstate(\testBasisAAShortnT\podstate)^{-1}\testBasisAAShortnT,
\end{equation}
respectively, and the Galerkin and LSPG state-space errors at time instance $n$ as
\begin{equation}
\errorGal{n}\defeq\stateFOM{n} - \podstate\stateGal{n},\quad
\errorLSPG{n}\defeq\stateFOM{n} - \podstate\stateLSPG{n},
\end{equation}
respectively. Because the time instance of the first and second arguments of
$\velocity$ always match for linear multistep schemes, we omit the second
argument (time) from $\velocity$ in the remainder of this section. All norms in this section
correspond to the Euclidean norm, i.e., $\norm{\cdot} = \norm{\cdot }_2$.

We proceed by deriving \textit{a posteriori} error bounds for the proposed
conservative techniques. We remark that derivation of state-space error bounds for the
proposed constrained ROMs is complicated by the presence of Lagrange
multipliers in the discrete equations
\eqref{eq:Gal}--\eqref{eq:LSPG}.  Thus, we derive bounds that relate to
the null-space and row-space of the associated constraint matrices, which
enables elimination of these
Lagrange multipliers from the analysis. To
accomplish this, we
make use of three 
decompositions of $\RR{\ndof}$.  The first is $\RR{\ndof} =
\range{\rightSingFull} \oplus\range{\nullBasisFull}$, where 
$\rightSingFull\in\RR{\ndof\times \rank{\meshMapping\podstate}}$ and 
$\nullBasisFull\in\RR{\ndof\times \ndof-\rank{\meshMapping\podstate}}$ are orthogonal matrices satisfying
\begin{gather}
\meshMapping\podstate = \leftSing\Sing\rightSing^T,\quad \rightSingFull = \podstate\rightSing,\quad\ \nullBasisFull^T\rightSingFull = \zero.
\end{gather}
The second is
$\RR{\ndof} = \range{\rightSingPFull} \oplus\range{\nullBasisPFull}$, where 
$\rightSingPFull\in\RR{\ndof\times \rank{\meshMapping \testBasisAAShortn}}$ and 
$\nullBasisPFull\in\RR{\ndof\times \ndof-\rank{\meshMapping \testBasisAAShortn}}$ are orthogonal matrices satisfying
\begin{gather}
	\meshMapping \testBasisAAShortn(\podstate^T\testBasisAAShortn)^{-1}
	 = \leftSingP\SingP\rightSingPT,\quad \rightSingPFull = \podstate\rightSingP,\quad\ \nullBasisPFullT\rightSingPFull = \zero.
\end{gather}
Finally, we consider $\RR{\ndof} = \range{\rightSingCFull}
\oplus\range{\nullBasisCFull}$, where 
$\rightSingCFull\in\RR{\ndof\times \rank{\meshMapping}}$ and 
$\nullBasisCFull\in\RR{\ndof\times \ndof-\rank{\meshMapping}}$ are orthogonal matrices satisfying
\begin{gather}
\meshMapping = \leftSingC\SingC\rightSingCFull^T,\quad \nullBasisCFull^T\rightSingCFull = \zero.
\end{gather}
Note that $\pinvmeshMapping\meshMapping = \rightSingCFull\rightSingCFull^T$.

\begin{lemma}[Local \textit{a posteriori} error bounds: null-space
	error]\label{thm:boundNull}
	If ${\bf A_1}$ holds and $\dt < \abs{\alpha_0^n}/(\abs{\beta_0^n}\lipschitzConstant)$, then 
\begin{align}
\label{eq:localFinalGal}
&\norm{\nullBasisFull^T\errorGal{n}} \leq
\sum_{\ell=0}^k\epsilonCoeff{\ell}{n}\left(\lipschitzConstant\norm{\rightSingFull^T\errorGal{n-\ell}}
	+ \norm{(\identity -
\GalProj)\fSimple{\stateInit+\podstate\stateGal{n-\ell}}{\timeArg{n-\ell}}}\right)+\sum_{\ell=1}^k\gammaCoeff{\ell}{n}\norm{\nullBasisFull^T\errorGal{n-\ell}}
\\
\label{eq:localFinalLSPG}
&\norm{\nullBasisPFullT\errorLSPG{n}} \leq 
\sum_{\ell=0}^k\epsilonCoeff{\ell}{n}\left(\lipschitzConstant\norm{\rightSingPFullT\errorLSPG{n-\ell}}
	+ \norm{(\identity -
\LSPGProj)\fSimple{\stateInit+\podstate\stateLSPG{n-\ell}}{\timeArg{n-\ell}}}\right)+\sum_{\ell=1}^k\gammaCoeff{\ell}{n}\norm{\nullBasisPFullT\errorLSPG{n-\ell}},
\end{align}
where $\epsilonCoeff{\ell}{m}\defeq\abs{\beta_\ell^m}\dt/\hCoeff{m}$, 
$\gammaCoeff{\ell}{m}\defeq(\abs{\alpha_\ell^m} + \abs{\beta_\ell^m}\lipschitzConstant\dt)/\hCoeff{m}$, and
$\hCoeff{m}\defeq\abs{\alpha_0^m} - \abs{\beta_0^m}\lipschitzConstant\dt$.
\end{lemma}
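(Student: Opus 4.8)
The plan is to lift the reduced-coordinate Galerkin and LSPG equations to $\RR{\ndof}$, subtract them from the full-order equation~\eqref{eq:FOMerror}, project the resulting error identity onto the null space of the relevant constraint operator so that the Lagrange-multiplier terms disappear, and then close the recursion using the Lipschitz assumption~${\bf A_1}$ together with the orthogonal splits $\RR{\ndof} = \range{\rightSingFull}\oplus\range{\nullBasisFull}$ and $\RR{\ndof} = \range{\rightSingPFull}\oplus\range{\nullBasisPFull}$ introduced above.

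For the Galerkin bound, I would premultiply~\eqref{eq:Gal} by $\podstate$, use $\podstate^T\podstate=\identity$ and $\GalProj\podstate=\podstate$ (the latter applied termwise inside $\resGalPrevC$ via~\eqref{eq:resPrevRev}), and subtract from~\eqref{eq:FOMerror} to obtain
\begin{align*}
\alpha_0^n\errorGal{n} &= \beta_0^n\dt\bigl[\fSimple{\stateInit+\stateFOM{n}}{\timeArg{n}} - \GalProj\fSimple{\stateInit+\podstate\stateGal{n}}{\timeArg{n}}\bigr] + \sum_{j=0}^k\alpha_j^n\GalProj\meshMapping^T\lagrangeMultipliersGalTime{n-j}\\
&\quad{} + \sum_{\ell=1}^k\Bigl(\beta_\ell^n\dt\bigl[\fSimple{\stateInit+\stateFOM{n-\ell}}{\timeArg{n-\ell}} - \GalProj\fSimple{\stateInit+\podstate\stateGal{n-\ell}}{\timeArg{n-\ell}}\bigr] - \alpha_\ell^n\errorGal{n-\ell}\Bigr).
\end{align*}
Premultiplying by $\nullBasisFull^T$ annihilates the Lagrange-multiplier term, because $\GalProj\meshMapping^T = \podstate\podstate^T\meshMapping^T = \rightSingFull\Sing\leftSing^T$ and $\nullBasisFull^T\rightSingFull = \zero$. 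In every remaining flux difference I would write $\fSimple{\stateInit+\stateFOM{m}}{\timeArg{m}} - \GalProj\fSimple{\stateInit+\podstate\stateGal{m}}{\timeArg{m}} = \bigl[\fSimple{\stateInit+\stateFOM{m}}{\timeArg{m}} - \fSimple{\stateInit+\podstate\stateGal{m}}{\timeArg{m}}\bigr] + (\identity-\GalProj)\fSimple{\stateInit+\podstate\stateGal{m}}{\timeArg{m}}$, bound the first bracket by $\lipschitzConstant\norm{\errorGal{m}}\le\lipschitzConstant\bigl(\norm{\rightSingFull^T\errorGal{m}}+\norm{\nullBasisFull^T\errorGal{m}}\bigr)$ using~${\bf A_1}$ and the orthogonal split, and note that each term $\alpha_\ell^n\errorGal{n-\ell}$ contributes only $\abs{\alpha_\ell^n}\norm{\nullBasisFull^T\errorGal{n-\ell}}$ since it is hit directly by $\nullBasisFull^T$. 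After taking norms, moving the $\ell=0$ contribution $\abs{\beta_0^n}\lipschitzConstant\dt\,\norm{\nullBasisFull^T\errorGal{n}}$ to the left-hand side, and dividing by $\hCoeff{n}=\abs{\alpha_0^n}-\abs{\beta_0^n}\lipschitzConstant\dt$, which is positive precisely under the stated step-size restriction, the coefficients collect into $\epsilonCoeff{\ell}{n}$ and $\gammaCoeff{\ell}{n}$ and yield~\eqref{eq:localFinalGal}.

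The LSPG bound follows by the identical argument with $\GalProj$ replaced by $\LSPGProj$, with $\resGalPrevC$ replaced by $(\testBasisAAShortnT\podstate)^{-1}\testBasisAAShortnT\resPrev{\podstate\stateLSPG}$, and with the decomposition $(\rightSingFull,\nullBasisFull)$ replaced by $(\rightSingPFull,\nullBasisPFull)$, giving~\eqref{eq:localFinalLSPG}; the only step needing separate verification is that the Lagrange term again drops out under $\nullBasisPFullT$, which follows from $\LSPGProj\meshMapping^T = \rightSingPFull\SingP\leftSingPT$ (a consequence of $\rightSingPFull=\podstate\rightSingP$ and $\meshMapping\testBasisAAShortn(\podstate^T\testBasisAAShortn)^{-1}=\leftSingP\SingP\rightSingPT$) together with $\nullBasisPFullT\rightSingPFull=\zero$. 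I expect the main obstacle to be nothing computational but rather pinning down these two null-space annihilation identities correctly: they are exactly what eliminates the Lagrange multipliers and the reason the bounds must be stated on $\nullBasisFull^T\errorGal{n}$ and $\nullBasisPFullT\errorLSPG{n}$ rather than on the full state error; once they are established, the rest is the standard Lipschitz/triangle-inequality recursion used for the unconstrained ROMs in Ref.~\cite{carlbergGalDiscOpt}.
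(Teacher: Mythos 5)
Your proposal is correct and follows essentially the same route as the paper's proof: forming the error identity, annihilating the Lagrange-multiplier term by projecting onto $\range{\nullBasisFull}$ (resp.\ $\range{\nullBasisPFull}$) via $\nullBasisFull^T\rightSingFull=\zero$ (resp.\ $\nullBasisPFullT\rightSingPFull=\zero$), adding and subtracting the flux at the ROM state, splitting the Lipschitz term with the orthogonal decomposition, and absorbing the $\ell=0$ contribution into $\hCoeff{n}$ under the step-size restriction. The only cosmetic difference is that you lift to $\RR{\ndof}$ before projecting whereas the paper premultiplies the reduced equations by $\nullBasisPFullT\podstate$ directly, and you make the annihilation identities explicit where the paper leaves them implicit.
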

\begin{proof}
Subtracting the premultiplication of Eq.~\eqref{eq:LSPG} by $\nullBasisPFullT\podstate$
from the premultiplication of Eq.~\eqref{eq:FOMerror} by $\nullBasisPFullT$ 
yields
\begin{align}\label{eq:localErrOneP}
\begin{split}
\alpha^n_0\nullBasisPFullT\errorLSPG{n} = \beta_0^n\dt\nullBasisPFullT\left(\fSimple{\stateInit + \stateFOM{n}}{\timeArg{n}}
-
\LSPGProj
\fSimple{\stateInit +
\podstate\stateLSPG{n}}{\timeArg{n}}
\right) + \nullBasisPFullT\resLSPGDiff,
\end{split}
\end{align}
where $\resLSPGDiff\defeq\resFOMPrev-\LSPGProj\resPrev{\podstate\stateLSPG}$. Adding and subtracting 
$\beta_0^n\dt\nullBasisPFullT\fSimple{\stateInit +
\podstate\stateLSPG{n}}{\timeArg{n}}$ from Eq.~\eqref{eq:localErrOneP} yields
\begin{align}\label{eq:localErrTwoP}
	\begin{split}
		\alpha^n_0\nullBasisPFullT\errorLSPG{n} =& \beta_0^n\dt\nullBasisPFullT[\fSimple{\stateInit + \stateFOM{n}}{\timeArg{n}}
-\fSimple{\stateInit +
\podstate\stateLSPG{n}}{\timeArg{n}}\\
&+\fSimple{\stateInit +
\podstate\stateLSPG{n}}{\timeArg{n}}-\LSPGProj\fSimple{\stateInit +
\podstate\stateLSPG{n}}{\timeArg{n}}
] + \nullBasisPFullT\resLSPGDiff.
\end{split}
\end{align}
Applying the triangle inequality yields
\begin{equation}
	\abs{\alpha^n_0}\norm{\nullBasisPFullT\errorLSPG{n}} \leq
	\abs{\beta_0^n}\dt\left(\norm{\fSimple{\stateInit + \stateFOM{n}}{\timeArg{n}}
-\fSimple{\stateInit +
\podstate\stateLSPG{n}}{\timeArg{n}}}+\norm{(\identity-\LSPGProj)\fSimple{\stateInit +
\podstate\stateLSPG{n}}{\timeArg{n}}}
\right) + \norm{\nullBasisPFullT\resLSPGDiff}.
\end{equation}
Now, using Lipschitz continuity and $\bds y =
\rightSingPFull\rightSingPFullT\bds y + \nullBasisPFull\nullBasisPFullT\bds y$ for all $\bds y\in\RR{\ndof}$, we have
\begin{equation}
	\abs{\alpha^n_0}\norm{\nullBasisPFullT\errorLSPG{n}} \leq
	\abs{\beta_0^n}\dt\left(\lipschitzConstant\norm{\nullBasisPFullT\errorLSPG{n}} +
		\lipschitzConstant\norm{\rightSingPFullT\errorLSPG{n}} + \norm{(\identity-\LSPGProj)\fSimple{\stateInit +
		\podstate\stateLSPG{n}}{\timeArg{n}}}
	\right) + \norm{\nullBasisPFullT\resLSPGDiff}.
\end{equation}
Using $\dt<\abs{\alpha^n_0}/(\abs{\beta_0^n}\lipschitzConstant)$, we have 
\begin{equation}\label{eq:localOne}
	\norm{\nullBasisPFullT\errorLSPG{n}} \leq
	\frac{\abs{\beta_0^n}\dt}{\hCoeff{n}}\left(\lipschitzConstant\norm{\rightSingPFullT\errorLSPG{n}}
		+ \norm{(\identity-\LSPGProj)\fSimple{\stateInit +
		\podstate\stateLSPG{n}}{\timeArg{n}}}
	\right) + \frac{1}{\hCoeff{n}}\norm{\nullBasisPFullT\resLSPGDiff}.
\end{equation}
Next, we estimate $\norm{\nullBasisPFullT\resLSPGDiff}$. First, we have
 \begin{align} 
 \begin{split} 
	 \nullBasisPFullT\resLSPGDiff=&\sum_{\ell=1}^k\beta_\ell^n\dt\nullBasisPFullT\left(\fSimple{\stateInit+\stateFOM{n-\ell}}{\timeArg{n-\ell}}-\LSPGProj\fSimple{\stateInit+\podstate\stateLSPG{n-\ell}}{\timeArg{n-\ell}}\right)\\
																&-\sum_{\ell=1}^k\alpha^n_\ell\nullBasisPFullT\left(\stateFOM{n-\ell}-\podstate\stateLSPG{n-\ell}\right).
 \end{split} 
 \end{align} 
Following similar steps to those above, adding and subtracting 
$\sum_{\ell=1}^k\beta_\ell^n\dt\nullBasisPFullT\fSimple{\stateInit +
\podstate\stateLSPG{n-\ell}}{\timeArg{n}}$ and applying the triangle inequality yields
 \begin{equation} \label{eq:finaldR}
	 \norm{\nullBasisPFullT\resLSPGDiff}\leq\sum_{\ell=1}^k\abs{\beta_\ell^n}\dt\left(\lipschitzConstant\norm{\rightSingPFullT\errorLSPG{n-\ell}}
		 + \norm{(\identity -
	 \LSPGProj)\fSimple{\stateInit+\podstate\stateLSPG{n-\ell}}{\timeArg{n-\ell}}}\right)+\sum_{\ell=1}^k(\abs{\beta_\ell^n}\lipschitzConstant\dt+\abs{\alpha^n_\ell})\norm{\nullBasisPFullT\errorLSPG{n-\ell}}.
 \end{equation} 
Combining inequalities \eqref{eq:localOne} and \eqref{eq:finaldR} yields the final
result \eqref{eq:localFinalLSPG}. The Galerkin counterpart
\eqref{eq:localFinalGal} can be derived by following the same steps with the
Galerkin operators.
\end{proof}

Lemma \ref{thm:boundNull} shows that the component of the error in the null
space of the constraints behaves very similarly to the full-space error in the
case of standard, unconstrained ROMs as reported in \cite[Theorem 6.1]{carlbergGalDiscOpt}; the only difference is the addition of
the terms arising from the row-space errors, which is 
$\sum_{\ell=0}^k\epsilonCoeff{\ell}{n}\lipschitzConstant\norm{\rightSingFull^T\errorGal{n-\ell}}$
and
$
\sum_{\ell=0}^k\epsilonCoeff{\ell}{n}\lipschitzConstant\norm{\rightSingPFullT\errorLSPG{n-\ell}}
$
for
conservative Galerkin and conservative LSPG projection, respectively.

\begin{lemma}[Local \textit{a posteriori} error bounds: row-space
	error]\label{thm:boundRow}
	If ${\bf A_1}$ holds and $\dt < \abs{\alpha_0^n}/(\abs{\beta_0^n}\lipschitzConstant)$, then 
\begin{align}\label{eq:localFinalGalInplane}
 \begin{split} 
	 \norm{\rightSingFull^T\errorGal{n}} \leq&
	 \sum_{\ell=0}^k\epsilonCoeff{\ell}{n}\left(\lipschitzConstant\norm{\nullBasisFull^T\errorGal{n-\ell}}
		 +\normedQuantityG\norm{(\identity-\GalProj)\fSimple{\stateInit +
	 \podstate\stateGal{n-\ell}}{\timeArg{n-\ell}}}\right)
	 +\sum_{\ell=1}^k\gammaCoeff{\ell}{n}\norm{\rightSingFull^T\errorGal{n-\ell}}
	  \end{split} \\
		\label{eq:localFinalLSPGInplane}
 \begin{split} 
	 \norm{\rightSingPFullT\errorLSPG{n}} \leq&
	 \sum_{\ell=0}^k\epsilonCoeff{\ell}{n}\left(\lipschitzConstant\norm{\nullBasisPFullT\errorLSPG{n-\ell}}
		 +\normedQuantityP\norm{(\identity-\LSPGProjT)\fSimple{\stateInit +
	 \podstate\stateLSPG{n-\ell}}{\timeArg{n-\ell}}}\right)
	 +\sum_{\ell=1}^k\gammaCoeff{\ell}{n}\norm{\rightSingPFullT\errorLSPG{n-\ell}}\\
&
	 +\frac{\normedQuantityP}{\hCoeff{n}}\norm{\ProjDiff}\sum_{\ell=0}^k\abs{\alpha_\ell^n}\norm{\stateLSPG{n-\ell}}\reviewerB{,}
	  \end{split}
\end{align}
 where
 $\normedQuantityG\defeq\norm{\Sing^{-1}\leftSing^T\meshMapping}$, $\normedQuantityP\defeq
 \norm{\SingPInv\leftSingPT\meshMapping}$,
 and
 $\ProjDiff\defeq\testBasisAAShortn(\podstate^T\testBasisAAShortn)^{-1}-\podstate$.
\end{lemma}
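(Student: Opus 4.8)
The plan is to reproduce the structure of the proof of Lemma~\ref{thm:boundNull}, but to project the error identities onto the \emph{row} space of the constraint operators rather than the null space. The first observation I would make is that the $\ndof$-dimensional equations \eqref{eq:Gal} and \eqref{eq:LSPG}, which carry the Lagrange multipliers, are no longer convenient here, since premultiplying them by a row-space basis does not annihilate the multiplier terms. Instead I would work from the multiplier-free ``decomposed'' equations \eqref{eq:galDecomp} and \eqref{eq:lspgDecomp}---which hold because the conservative ROMs satisfy conservation on $\meshDecomp$ exactly---paired with the FOM equation \eqref{eq:FOMerror}.

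For conservative Galerkin I would premultiply \eqref{eq:FOMerror} by $\rightSingFull^{T}=\rightSing^{T}\podstate^{T}$ and \eqref{eq:galDecomp} by $\Sing^{-1}\leftSing^{T}$; since $\Sing^{-1}\leftSing^{T}\meshMapping\podstate=\rightSing^{T}$, the latter's left-hand side becomes $\alpha_{0}^{n}\rightSing^{T}\stateGal{n}$, and subtracting yields a recursion for $\alpha_{0}^{n}\rightSingFull^{T}\errorGal{n}$. The one nonroutine identity is the ``bridging'' relation $\Sing^{-1}\leftSing^{T}\meshMapping=\rightSingFull^{T}+\Sing^{-1}\leftSing^{T}\meshMapping(\identity-\GalProj)$ (immediate from $\Sing^{-1}\leftSing^{T}\meshMapping\podstate\podstate^{T}=\rightSing^{T}\podstate^{T}$), which rewrites each velocity evaluation along the ROM trajectory as a $\rightSingFull^{T}$-projected part---pairing with the corresponding FOM term---plus a projection-error part bounded by $\normedQuantityG\norm{(\identity-\GalProj)\velocity}$ with $\normedQuantityG=\norm{\Sing^{-1}\leftSing^{T}\meshMapping}$. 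The matched velocity differences are then controlled with $\mathbf{A_1}$ and $\norm{\errorGal{n}}\le\norm{\rightSingFull^{T}\errorGal{n}}+\norm{\nullBasisFull^{T}\errorGal{n}}$; absorbing the $\abs{\beta_{0}^{n}}\lipschitzConstant\dt\norm{\rightSingFull^{T}\errorGal{n}}$ contribution onto the left (legitimate since $\dt<\abs{\alpha_{0}^{n}}/(\abs{\beta_{0}^{n}}\lipschitzConstant)$ makes $\hCoeff{n}>0$) and dividing by $\hCoeff{n}$ gives the $\ell=0$ terms; applying the same manipulation to the difference of the previous-step residual terms appearing in \eqref{eq:FOMerror} and \eqref{eq:galDecomp} (cf.\ \eqref{eq:resPrevRev}) gives the $\ell\ge1$ sums with coefficients $\gammaCoeff{\ell}{n}$, establishing \eqref{eq:localFinalGalInplane}.

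For conservative LSPG the argument is parallel, premultiplying \eqref{eq:lspgDecomp} by $\SingPInv\leftSingPT$ and \eqref{eq:FOMerror} by $\rightSingPFullT=\rightSingPT\podstate^{T}$, with $\testBasisAAShortn$ now playing the role $\podstate$ played above. Writing $\testBasisAAShortn(\podstate^{T}\testBasisAAShortn)^{-1}=\podstate+\ProjDiff$ and using $\SingPInv\leftSingPT\meshMapping\testBasisAAShortn(\podstate^{T}\testBasisAAShortn)^{-1}=\rightSingPT$ gives $\SingPInv\leftSingPT\meshMapping\podstate=\rightSingPT-\SingPInv\leftSingPT\meshMapping\ProjDiff$, so the left-hand side of \eqref{eq:lspgDecomp} picks up the term $-\alpha_{0}^{n}\SingPInv\leftSingPT\meshMapping\ProjDiff\stateLSPG{n}$ (and analogous $\alpha_{\ell}^{n}$-terms from the previous-step residual), each bounded by $\abs{\alpha_{\ell}^{n}}\normedQuantityP\norm{\ProjDiff}\norm{\stateLSPG{n-\ell}}$ with $\normedQuantityP=\norm{\SingPInv\leftSingPT\meshMapping}$; after dividing by $\hCoeff{n}$ these assemble into the extra term $\frac{\normedQuantityP}{\hCoeff{n}}\norm{\ProjDiff}\sum_{\ell=0}^{k}\abs{\alpha_{\ell}^{n}}\norm{\stateLSPG{n-\ell}}$. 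The velocity-bridging identity becomes $\SingPInv\leftSingPT\meshMapping=\rightSingPFullT+\SingPInv\leftSingPT\meshMapping(\identity-\LSPGProjT)$, which follows once one observes $(\podstate+\ProjDiff)\podstate^{T}=\testBasisAAShortn(\podstate^{T}\testBasisAAShortn)^{-1}\podstate^{T}=\LSPGProjT$; this produces the $\normedQuantityP\norm{(\identity-\LSPGProjT)\velocity}$ terms, and combining with the rest exactly as for Galerkin gives \eqref{eq:localFinalLSPGInplane}.

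I expect the main obstacle to be bookkeeping rather than any conceptual difficulty: one must be careful about which equation (the $\ndof$-dimensional one versus the decomposed one) is premultiplied by which basis so that the Lagrange multipliers vanish and the surviving terms regroup into precisely $\lipschitzConstant\norm{\nullBasisFull^{T}\errorGal{n-\ell}}$, $\normedQuantityG\norm{(\identity-\GalProj)\velocity}$ and $\norm{\rightSingFull^{T}\errorGal{n-\ell}}$ (and their LSPG counterparts plus the $\ProjDiff$ term). The only genuinely non-mechanical step is recognizing the collapse $(\podstate+\ProjDiff)\podstate^{T}=\LSPGProjT$, which is what turns the stray $\ProjDiff\podstate^{T}$ piece in the LSPG velocity bridging into the clean oblique-projection error $(\identity-\LSPGProjT)\velocity$ reported in the statement.
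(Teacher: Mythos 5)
Your proposal is correct and follows essentially the same route as the paper's proof: it works from the multiplier-free decomposed equations \eqref{eq:galDecomp}--\eqref{eq:lspgDecomp} together with \eqref{eq:FOMerror}, premultiplies by $\Sing^{-1}\leftSing^T$ (resp.\ $\SingPInv\leftSingPT$) and the row-space bases, isolates the $\ProjDiff$ discrepancy to produce the $\normedQuantityP\norm{\ProjDiff}$ terms, and then uses Lipschitz continuity, the triangle inequality, the row/null splitting, and the time-step condition to absorb the $\ell=0$ row-space term, treating the history terms in \eqref{eq:resPrevRev} the same way. The only cosmetic difference is that you apply the bridging identity $\SingPInv\leftSingPT\meshMapping=\rightSingPFullT+\SingPInv\leftSingPT\meshMapping(\identity-\LSPGProjT)$ on the ROM-velocity side instead of carrying $\LSPGProjT$ along with the FOM equation as the paper does, which is algebraically equivalent.
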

\begin{proof}
Noting that $\rightSingPFullT\podstate\stateLSPG{n} = \rightSingPT\stateLSPG{n}$, we have from 
adding and subtracting
$\alpha_0^n\meshMapping[\testBasisAAShortn(\podstate^T\testBasisAAShortn)^{-1}]\stateLSPG{n}$
to Eq.~\eqref{eq:lspgDecomp}
and
pre-multiplying by $\SingPInv\leftSingPT$
 \begin{equation} \label{eq:secondLSPG}
\alpha_0^n\rightSingPFullT\podstate\stateLSPG{n} =
\beta_0^n\dt\SingPInv\leftSingPT\meshMapping\fSimple{\stateInit +
\stateLSPG{n}}{\timeArg{n}} + 
\alpha_0^n\SingPInv\leftSingPT\meshMapping\ProjDiff\stateLSPG{n}+
\SingPInv\leftSingPT\meshMapping\resPrev{\podstate\stateLSPG}.
 \end{equation} 
Premultiplying
Eq.~\eqref{eq:FOMerror} by
$\rightSingPFullT=\SingPInv\leftSingPT\meshMapping\LSPGProjT$
yields
 \begin{equation} \label{eq:secondFOMP}
\alpha^n_0\rightSingPFullT\stateFOM{n} = \beta_0^n\dt
\SingPInv\leftSingPT\meshMapping\LSPGProjT\fSimple{\stateInit +
\stateFOM{n}}{\timeArg{n}} +
\SingPInv\leftSingPT\meshMapping\LSPGProjT\resPrev{\podstate\stateFOM}.
 \end{equation} 
 Subtracting \eqref{eq:secondLSPG} from \eqref{eq:secondFOMP} yields
 \begin{align} \label{eq:rowspaceLater}
 \begin{split} 
	 \alpha^n_0\rightSingPFullT\errorLSPG{n} =& \beta_0^n\dt
	 \SingPInv\leftSingPT\meshMapping\left[\LSPGProjT\fSimple{\stateInit +
\stateFOM{n}}{\timeArg{n}} - \fSimple{\stateInit +
\stateLSPG{n}}{\timeArg{n}}
\right]
- \alpha_0^n\SingPInv\leftSingPT\meshMapping\ProjDiff\stateLSPG{n}  \\
&+
\SingPInv\leftSingPT\meshMapping\resLSPGDiffP,
	  \end{split} 
	  \end{align} 
		where 
$\resLSPGDiffP\defeq\LSPGProjT\resPrev{\stateFOM}-
\resPrev{\podstate\stateLSPG}$.
		Adding and subtracting 
	$\LSPGProjT\fSimple{\stateInit +
\stateLSPG{n}}{\timeArg{n}}$ to the bracketed quantity, applying the triangle inequality, and using
Lipschitz continuity yields
 \begin{align} 
 \begin{split} 
	 \abs{\alpha^n_0}\norm{\rightSingPFullT\errorLSPG{n}} \leq& 
	 \abs{\beta_0^n}\dt\lipschitzConstant
	 \norm{\SingPInv\leftSingPT\meshMapping\LSPGProjT}(\norm{\rightSingPFullT\errorLSPG{n}}+\norm{\nullBasisPFullT\errorLSPG{n}})\\
	&+\abs{\beta_0^n}\dt
	 \norm{\SingPInv\leftSingPT\meshMapping}\norm{(\identity-\LSPGProj)\fSimple{\stateInit +
	 \podstate\stateLSPG{n}}{\timeArg{n}}}
\\
&+
\abs{\alpha_0^n}\norm{\SingPInv\leftSingPT\meshMapping\ProjDiff}\norm{\stateLSPG{n}}  
+
\norm{\SingPInv\leftSingPT\meshMapping\resLSPGDiffP}.
	  \end{split} 
	  \end{align} 
		Noting that $\norm{\SingPInv\leftSingPT\meshMapping\LSPGProjT} =
		\norm{\podstate\rightSingP}= 1$
		and $\dt < \abs{\alpha^n_0}/(\abs{\beta_0^n}
	 \lipschitzConstant)$
	 yields
 \begin{align}\label{eq:localOneInplane}
 \begin{split} 
	 \norm{\rightSingPFullT\errorLSPG{n}} \leq&
	 \epsilonCoeff{0}{n}\lipschitzConstant\norm{\nullBasisPFullT\errorLSPG{n}}
	 +\epsilonCoeffP{0}{n}\norm{(\identity-\LSPGProj)\fSimple{\stateInit +
	 \podstate\stateLSPG{n}}{\timeArg{n}}}
\\
&+
\frac{\abs{\alpha_0^n}\norm{\SingPInv\leftSingPT\meshMapping\ProjDiff}}{\hCoeff{n}}\norm{\stateLSPG{n}}  
+
\frac{1}{\hCoeff{n}}\norm{\SingPInv\leftSingPT\meshMapping\resLSPGDiffP}.
	  \end{split} 
	  \end{align} 
		Next, we estimate $\norm{\SingPInv\leftSingPT\meshMapping\resLSPGDiffP}$.
		First, we have from
adding and subtracting 
$\sum_{\ell=1}^k\testBasisAAShortn(\podstate^T\testBasisAAShortn)^{-1}\stateLSPG{n-\ell}$
\reviewerB{that}
\begin{align}\label{eq:tmptmp}
		\begin{split}
			\SingPInv\leftSingPT\meshMapping\resLSPGDiffP = &
\sum_{\ell=1}^k\beta_\ell^n\dt\SingPInv\leftSingPT\meshMapping\left(\LSPGProjT\fSimple{\stateInit+\stateFOM{n-\ell}}{\timeArg{n-\ell}}-\fSimple{\stateInit+\podstate\stateLSPG{n-\ell}}{\timeArg{n-\ell}}\right)\\
& -
\sum_{\ell=1}^k\alpha_\ell^n\SingPInv\leftSingPT\meshMapping\left(\LSPGProjT\stateFOM{n-\ell}-
\testBasisAAShortn(\podstate^T\testBasisAAShortn)^{-1}\stateLSPG{n-\ell}
+\ProjDiff\stateLSPG{n-\ell}\right).
		\end{split}
		\end{align}
Following similar steps to those above, adding and subtracting
$\sum_{\ell=1}^k\beta_\ell^n\dt\SingPInv\leftSingPT\meshMapping
\LSPGProjT\fSimple{\stateInit+\podstate\stateLSPG{n-\ell}}{\timeArg{n-\ell}}
$  and applying the triangle inequality yields
\begin{align}\label{eq:finaldRInplane}
		\begin{split}
			\norm{\SingPInv\leftSingPT\meshMapping\resLSPGDiffP}\leq &
			\sum_{\ell=1}^k\abs{\beta_\ell^n}\dt\lipschitzConstant(\norm{\rightSingPFullT\errorLSPG{n-\ell}}+\norm{\nullBasisPFullT\errorLSPG{n-\ell}}) \\
			&+\sum_{\ell=1}^k\abs{\beta_\ell^n}\dt
			\norm{\SingPInv\leftSingPT\meshMapping}\norm{(\identity-\LSPGProj)\fSimple{\stateInit +
			\podstate\stateLSPG{n-\ell}}{\timeArg{n-\ell}}}\\
& +
\sum_{\ell=1}^k\abs{\alpha_\ell^n}
\norm{
\rightSingPFullT\errorLSPG{n-\ell}
}
+
\sum_{\ell=1}^k\abs{\alpha_\ell^n}\norm{\SingPInv\leftSingPT\meshMapping\ProjDiff}\norm{\stateLSPG{n-\ell}}.
		\end{split}
		\end{align}
Combining inequalities \eqref{eq:localOneInplane} and \eqref{eq:finaldRInplane} yields the final
result \eqref{eq:localFinalLSPGInplane}. The Galerkin counterpart
\eqref{eq:localFinalGalInplane} can be derived by following the same steps with the
Galerkin operators; the main modification is that adding and subtracting 
$\sum_{\ell=0}^k\beta_\ell^n\dt\Sing^{-1}\leftSing^T\meshMapping
\GalProj\fSimple{\stateInit+\podstate\stateGal{n-\ell}}{\timeArg{n-\ell}}
$ is not needed in the Galerkin case.
\end{proof}

Lemma \ref{thm:boundRow} shows that---as was the case with null-space error
bounds in Lemma \ref{thm:boundNull}---the row-space error bounds are affected by the error incurred in the
null space through the terms
$
\sum_{\ell=0}^k\epsilonCoeff{\ell}{n}\lipschitzConstant\norm{\nullBasisFull^T\errorGal{n-\ell}}
$
and
$
\sum_{\ell=0}^k\epsilonCoeff{\ell}{n}\lipschitzConstant\norm{\nullBasisPFullT\errorLSPG{n-\ell}}
$
for conservative Galerkin and conservative LSPG projection, respectively.
Further, these bounds are quite similar to the null-space error bounds with
two exceptions. First, the projection-error term is multiplied by a constant,
which is $\normedQuantityG$ in the case of conservative Galerkin projection
and is $\normedQuantityP$ in the case of conservative LSPG projection; this
constant arises from the fact that these bounds are derived from the discrete
equations associated with subdomain conservation
\eqref{eq:galDecomp}--\eqref{eq:lspgDecomp}. We also note that the
conservative LSPG row-space error bound employs the transpose of the typical
LSPG projector, i.e., $\LSPGProjT$, which is the oblique projection onto
$\range{\testBasisAAShortn}$ orthogonal to $\range{\podstate}$; this 
arises from appearance of the \textit{transpose} of the constraints in
the discrete equations \eqref{eq:LSPG}. This also leads to the appearance of
the term proportional to $\norm{\ProjDiff}$ in the conservative LSPG error bound.

\begin{theorem}[Local \textit{a posteriori} error bounds]\label{thm:localaposteriori}
	If ${\bf A_1}$ holds and $\dt < \abs{\alpha_0^n}/(\abs{\beta_0^n}\lipschitzConstant)$, then
 \begin{align} \label{eq:localFinalGalCombine}
 \begin{split} 
	 \norm{\errorGal{n}} \leq& 
	 \sum_{\ell=0}^k(1 + \normedQuantityG)\epsilonCoeff{\ell}{n}
	 \norm{(\identity - \GalProj)\fSimple{\stateInit +
	 \podstate\stateGal{n-\ell}}{\timeArg{n-\ell}}}+
	 \sum_{\ell=1}^k\gammaCoeff{\ell}{n}\norm{\errorGal{n-\ell}}
	  \end{split} \\
 \begin{split} 
	 \norm{\errorLSPG{n}} \leq& 
	 \sum_{\ell=0}^k\epsilonCoeff{\ell}{n}
	 \norm{(\identity - \LSPGProj)\fSimple{\stateInit +
	 \podstate\stateLSPG{n-\ell}}{\timeArg{n-\ell}}}
		 +\normedQuantityP\sum_{\ell=0}^k\epsilonCoeff{\ell}{n}
		 \norm{(\identity - \LSPGProjT\LSPGProj)\fSimple{\stateInit +
		\podstate \stateLSPG{n-\ell}}{\timeArg{n-\ell}}}
 \\
 &+
 \frac{\normedQuantityP}{\hCoeff{n}}\norm{\ProjDiff}\sum_{\ell=0}^k\abs{\alpha_\ell^n}\norm{\stateLSPG{n-\ell}}  
 +\sum_{\ell=1}^k\gammaCoeff{\ell}{n}\norm{\errorLSPG{n-\ell}}.
	  \end{split} 
	\end{align}
\end{theorem}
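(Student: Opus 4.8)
The plan is to obtain the two full-state bounds by combining the null-space estimate (Lemma~\ref{thm:boundNull}) with the row-space estimate (Lemma~\ref{thm:boundRow}) through the orthogonal decompositions of $\RR{\ndof}$ introduced in Section~\ref{sec:error}. Since $[\rightSingFull\ \nullBasisFull]$ is an orthogonal $\ndof\times\ndof$ matrix, one has $\norm{\errorGal{n}}\leq\norm{\rightSingFull^T\errorGal{n}}+\norm{\nullBasisFull^T\errorGal{n}}$, and likewise $\norm{\errorLSPG{n}}\leq\norm{\rightSingPFullT\errorLSPG{n}}+\norm{\nullBasisPFullT\errorLSPG{n}}$ using $[\rightSingPFull\ \nullBasisPFull]$. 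So the first step is to add the bounds \eqref{eq:localFinalGal} and \eqref{eq:localFinalGalInplane} (respectively \eqref{eq:localFinalLSPG} and \eqref{eq:localFinalLSPGInplane}) and then eliminate the residual row/null-space components on the right-hand side via $\norm{\rightSingFull^T\errorGal{m}}\leq\norm{\errorGal{m}}$ and $\norm{\nullBasisFull^T\errorGal{m}}\leq\norm{\errorGal{m}}$ (and the $\mathrm{P}$ analogues) for $m\in\{n-k,\ldots,n\}$.

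The second step is the algebraic reorganization of the resulting inequality. The projection-error contributions combine: in the Galerkin case the null-space estimate contributes $\epsilonCoeff{\ell}{n}\norm{(\identity-\GalProj)\fSimple{\stateInit+\podstate\stateGal{n-\ell}}{\timeArg{n-\ell}}}$ and the row-space estimate contributes $\normedQuantityG\epsilonCoeff{\ell}{n}$ times the same quantity, producing the coefficient $(1+\normedQuantityG)\epsilonCoeff{\ell}{n}$ of \eqref{eq:localFinalGalCombine}; in the LSPG case one must additionally reconcile the projectors $\identity-\LSPGProj$ and $\identity-\LSPGProjT$ appearing in the two lemmas by adding and subtracting $\LSPGProjT\LSPGProj\fSimple{\stateInit+\podstate\stateLSPG{n-\ell}}{\timeArg{n-\ell}}$ (using $\LSPGProj\podstate=\podstate$), which is what yields the $\norm{(\identity-\LSPGProjT\LSPGProj)\fSimple{\cdot}{\cdot}}$ term, while the $\ProjDiff$-dependent contribution is carried through verbatim to give the $\tfrac{\normedQuantityP}{\hCoeff{n}}\norm{\ProjDiff}\sum_{\ell}\abs{\alpha_\ell^n}\norm{\stateLSPG{n-\ell}}$ summand. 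The Lipschitz cross-terms (those proportional to $\epsilonCoeff{\ell}{n}\lipschitzConstant$ and multiplying past errors) are folded into the recursive $\gammaCoeff{\ell}{n}$ terms using $\abs{\beta_\ell^n}\dt=\hCoeff{n}\epsilonCoeff{\ell}{n}$ and $\abs{\alpha_\ell^n}+\abs{\beta_\ell^n}\lipschitzConstant\dt=\hCoeff{n}\gammaCoeff{\ell}{n}$; this is cleanest if, instead of invoking the lemma statements verbatim, one re-runs the intermediate estimates of the residual-difference terms in \eqref{eq:FOMerror}--\eqref{eq:lspgDecomp} keeping $\norm{\errorGal{n-\ell}}$ (resp. $\norm{\errorLSPG{n-\ell}}$) intact rather than splitting it into null/row components for $\ell\geq1$. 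Finally, the $\ell=0$ terms on the right-hand side, which still involve $\errorGal{n}$ (resp. $\errorLSPG{n}$), are moved to the left-hand side; the hypothesis $\dt<\abs{\alpha_0^n}/(\abs{\beta_0^n}\lipschitzConstant)$ makes $\hCoeff{n}>0$ and the resulting coefficient of $\norm{\errorGal{n}}$ positive, after which one divides through to obtain \eqref{eq:localFinalGalCombine}. The LSPG estimate follows by the same steps with the Galerkin operators replaced by $\LSPGProj$, $\rightSingPFull$, $\nullBasisPFull$ and with the extra $\ProjDiff$ bookkeeping.

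I expect the main obstacle to be precisely this bookkeeping: obtaining the coefficients exactly as $\epsilonCoeff{\ell}{n}$, $(1+\normedQuantityG)\epsilonCoeff{\ell}{n}$, and $\gammaCoeff{\ell}{n}$ (without stray factors) forces careful choices of when to split an error into its null/row components and when to keep it whole, and, in the LSPG case, of how to route the oblique projector $\LSPGProjT$ and the perturbation $\ProjDiff$ through each triangle-inequality/Lipschitz step so that the $(\identity-\LSPGProjT\LSPGProj)$ and $\ProjDiff$ terms emerge in the stated form.
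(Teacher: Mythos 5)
Your plan has the right picture (recover the full-state bound from the null-space and row-space components), but the combination step as you describe it cannot produce the theorem with the stated coefficients, and the difficulty is precisely the bookkeeping you flag at the end. If you add the norm inequalities of Lemmas~\ref{thm:boundNull} and \ref{thm:boundRow} and then bound each component by the full error, the Lipschitz contribution is counted twice at every time level: at $\ell=0$ the two lemmas together put $\epsilonCoeff{0}{n}\lipschitzConstant\bigl(\norm{\rightSingPFullT\errorLSPG{n}}+\norm{\nullBasisPFullT\errorLSPG{n}}\bigr)$ on the right-hand side, a \emph{current-time} error term that the theorem's bound does not contain; absorbing it into the left-hand side forces a strictly stronger restriction than $\dt<\abs{\alpha_0^n}/(\abs{\beta_0^n}\lipschitzConstant)$ and rescales all remaining coefficients, while for $\ell\geq1$ the summed coefficient of $\norm{\errorLSPG{n-\ell}}$ comes out as (up to the extra $\sqrt{2}$ from $\norm{\rightSingPFullT y}+\norm{\nullBasisPFullT y}\leq\sqrt{2}\norm{y}$) $(\abs{\alpha_\ell^n}+2\abs{\beta_\ell^n}\lipschitzConstant\dt)/\hCoeff{n}$, which exceeds $\gammaCoeff{\ell}{n}$. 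Your proposed repair---re-running only the residual-difference estimates while keeping $\norm{\errorLSPG{n-\ell}}$ whole for $\ell\geq1$---does not remove this doubling: as long as you take norms of the null-space equation and of the row-space equation \emph{separately} and then add the two scalar inequalities, the Lipschitz bound on $\fSimple{\stateInit+\stateFOM{m}}{\timeArg{m}}-\fSimple{\stateInit+\podstate\stateLSPG{m}}{\timeArg{m}}$ is invoked once in each of them, and the $\ell=0$ issue (which your fix does not touch at all) remains.

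The missing idea---and what the paper actually does---is to recombine \emph{before} taking norms: premultiply Eq.~\eqref{eq:localErrOneP} by $\nullBasisPFull$ and Eq.~\eqref{eq:rowspaceLater} by $\rightSingPFull$ and add, using $\identity=\nullBasisPFull\nullBasisPFullT+\rightSingPFull\rightSingPFullT$ and $\rightSingPFullT=\SingPInv\leftSingPT\meshMapping\LSPGProjT$, to obtain a single vector identity for $\alpha_0^n\errorLSPG{n}$ in which the velocity enters as $\fSimple{\stateInit+\stateFOM{n}}{\timeArg{n}}-\compositeProj\fSimple{\stateInit+\podstate\stateLSPG{n}}{\timeArg{n}}$ and the history enters as the single quantity $\nullBasisPFull\nullBasisPFullT\resLSPGDiff+\rightSingPFull\SingPInv\leftSingPT\meshMapping\resLSPGDiffP$. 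One application of the triangle inequality and of ${\bf A_1}$ per time level then yields exactly the coefficients $\epsilonCoeff{\ell}{n}$, $\normedQuantityP\epsilonCoeff{\ell}{n}$, and $\gammaCoeff{\ell}{n}$ under only the stated step-size condition; the $(\identity-\LSPGProjT\LSPGProj)$ term arises by adding and subtracting $\fSimple{\cdot}{\cdot}+\rightSingPFull\rightSingPFullT\LSPGProj\fSimple{\cdot}{\cdot}$ (your add-and-subtract remark, made precise via $\rightSingPFull\rightSingPFullT\LSPGProj=\rightSingPFull\SingPInv\leftSingPT\meshMapping\LSPGProjT\LSPGProj$), and the $\ProjDiff$ contribution is carried along as you anticipated. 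The Galerkin bound follows the same route with $\GalProj$, where the two projection-error terms coincide (since $\GalProj^T\GalProj=\GalProj$) to give the $(1+\normedQuantityG)$ factor and the $\ProjDiff$ term vanishes. Without this combine-first step your argument proves only a weaker inequality with inflated constants and a tighter time-step requirement, not \eqref{eq:localFinalGalCombine}.
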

\begin{proof}
	Adding the premultiplication of Eq.~\eqref{eq:localErrOneP} by
	$\nullBasisFull$ to the premultiplication of Eq.~\eqref{eq:rowspaceLater}
	by $\rightSingPFull$ and noting  that 
	$\rightSingPFullT=\SingPInv\leftSingPT\meshMapping\LSPGProjT$
	yields
 \begin{align} \label{eq:rowspaceLater2}
 \begin{split} 
	 \alpha^n_0\errorLSPG{n} =& 
	 \beta_0^n\dt \left[
		 \fSimple{\stateInit
		 + \stateFOM{n}}{\timeArg{n}} -
		 \compositeProj\fSimple{\stateInit +\podstate \stateLSPG{n}}{\timeArg{n}}
 \right]
- \alpha_0^n\rightSingPFull\SingPInv\leftSingPT\meshMapping\ProjDiff\stateLSPG{n}  \\
&+ \nullBasisPFull\nullBasisPFullT\resLSPGDiff+
\rightSingPFull\SingPInv\leftSingPT\meshMapping\resLSPGDiffP\reviewerB{.}
	  \end{split} 
	  \end{align} 
	Adding and subtracting  $\fSimple{\stateInit
		 +\podstate \stateLSPG{n}}{\timeArg{n}} + \rightSingPFull\rightSingPFullT\LSPGProj
		 \fSimple{\stateInit
		 +\podstate \stateLSPG{n}}{\timeArg{n}}$ from the bracketed quantity, using
		 $\rightSingPFullT\rightSingPFullT\LSPGProj =
		 \rightSingPFull\SingPInv\leftSingPT\meshMapping\LSPGProjT\LSPGProj$, applying the
		 triangle inequality, and using Lipschitz continuity yields
 \begin{align} \label{eq:rowspaceLaterInequality}
 \begin{split} 
	 \abs{\alpha^n_0}\norm{\errorLSPG{n}} \leq& 
	 \abs{\beta_0^n}\dt \left[
		 \lipschitzConstant\norm{\errorLSPG{n}}
		  +
			\norm{(\identity - \LSPGProj)\fSimple{\stateInit +
		\podstate	\stateLSPG{n}}{\timeArg{n}}}
		 +
		 \norm{\SingPInv\leftSingPT\meshMapping}
		 \norm{(\identity - \LSPGProjT\LSPGProj)\fSimple{\stateInit +
		\podstate \stateLSPG{n}}{\timeArg{n}}}
 \right]\\
 &+
 \abs{\alpha_0^n}\norm{\SingPInv\leftSingPT\meshMapping\ProjDiff}\norm{\stateLSPG{n}}  
 + \norm{\nullBasisPFull\nullBasisPFullT\resLSPGDiff+
 \rightSingPFull\SingPInv\leftSingPT\meshMapping\resLSPGDiffP}.
	  \end{split} 
	\end{align}
	Now, using $\dt < \abs{\alpha_0^n}/(\abs{\beta_0^n}\lipschitzConstant)$ yields
 \begin{align} \label{eq:firstCombine}
 \begin{split} 
	 \norm{\errorLSPG{n}} \leq& 
	 \epsilonCoeff{0}{n}
	 \norm{(\identity - \LSPGProj)\fSimple{\stateInit +
	\podstate\stateLSPG{n}}{\timeArg{n}}}
		 +\epsilonCoeff{0}{n}
		 \normedQuantityP
		 \norm{(\identity - \LSPGProjT\LSPGProj)\fSimple{\stateInit +
		\podstate\stateLSPG{n}}{\timeArg{n}}}
 \\
 &+
 \frac{\normedQuantityP}{\hCoeff{n}}\norm{\ProjDiff}\abs{\alpha_0^n}\norm{\stateLSPG{n}}  
 +\frac{1}{\hCoeff{n}} \norm{\nullBasisPFull\nullBasisPFullT\resLSPGDiff+
 \rightSingPFull\SingPInv\leftSingPT\meshMapping\resLSPGDiffP}.
	  \end{split} 
	\end{align}
	Next, we estimate $\norm{\nullBasisPFull\nullBasisPFullT\resLSPGDiff+
	\rightSingPFull\SingPInv\leftSingPT\meshMapping\resLSPGDiffP}$.
\begin{align}
	\nullBasisPFull\nullBasisPFullT\resLSPGDiff &=
\nullBasisPFull\nullBasisPFullT\resFOMPrev-\nullBasisPFull\nullBasisPFullT\LSPGProj\resPrev{\podstate\stateLSPG}\\
\begin{split}
\rightSingPFull\SingPInv\leftSingPT\meshMapping\resLSPGDiffP
& = 
\rightSingPFull\SingPInv\leftSingPT\meshMapping
\LSPGProjT\resPrev{\stateFOM}-\rightSingPFull\SingPInv\leftSingPT\meshMapping
\resPrev{\podstate\stateLSPG}\\
& = 
\rightSingPFull\rightSingPFullT\resPrev{\stateFOM}-\rightSingPFull\SingPInv\leftSingPT\meshMapping
\resPrev{\podstate\stateLSPG}.
\end{split}
\end{align}
Thus,
\begin{align}
	\begin{split}
		\nullBasisPFull\nullBasisPFullT\resLSPGDiff &+
	\rightSingPFull\SingPInv\leftSingPT\meshMapping\resLSPGDiffP=\\
	&\sum_{\ell=1}^k\beta_\ell^n\dt\left[\fSimple{\stateInit+\stateFOM{n-\ell}}{\timeArg{n-\ell}}
	-\compositeProj\fSimple{\stateInit+\podstate\stateLSPG{n-\ell}}{\timeArg{n-\ell}}\right]
\\
&-\sum_{\ell=1}^k\alpha^n_\ell\left[\stateFOM{n-\ell}-\compositeProj\podstate\stateLSPG{n-\ell}\right].
	\end{split}
\end{align}
Adding and subtracting
$\sum_{\ell=1}^k\beta_\ell^n\dt(\fSimple{\stateInit
		 +\podstate\stateLSPG{n-\ell}}{\timeArg{n-\ell}} + \rightSingPFull\rightSingPFullT\LSPGProj
		 \fSimple{\stateInit
		 +\podstate\stateLSPG{n-\ell}}{\timeArg{n-\ell}}) +
	\sum_{\ell=1}^k
	\alpha_\ell^n\rightSingPFull\rightSingPFull\LSPGProj\podstate\stateLSPG{n-\ell}
		 $, using $\LSPGProj \podstate = \podstate$, and applying the triangle
		 inequality yields
		 \begin{align}\label{eq:residualCombine}
	\begin{split}
		\norm{\nullBasisPFull\nullBasisPFullT\resLSPGDiff &+
	\rightSingPFull\SingPInv\leftSingPT\meshMapping\resLSPGDiffP}\leq\\
	&\sum_{\ell=1}^k\abs{\beta_\ell^n}\dt\left[\lipschitzConstant\norm{\errorLSPG{n-\ell}}
	+
	\norm{(\identity-\LSPGProj)\fSimple{\stateInit+\podstate\stateLSPG{n-\ell}}{\timeArg{n-\ell}}}
+\normedQuantityP\norm{(\identity-\LSPGProjT\LSPGProj)\fSimple{\stateInit+\podstate\stateLSPG{n-\ell}}{\timeArg{n-\ell}}}\right]
\\
&+\sum_{\ell=1}^k\abs{\alpha^n_\ell}\left[\norm{\errorLSPG{n-\ell}}
+\normedQuantityP\norm{\ProjDiff}\norm{\stateLSPG{n-\ell}}\right].
	\end{split}
\end{align}
Combining inequalities \eqref{eq:firstCombine} and \eqref{eq:residualCombine}
yields the final result. The Galerkin result is derived similarly, except we
note that $\GalProj^T\GalProj = \identity$ and the $\ProjDiff$ term associated
with Galerkin projection is zero.
\end{proof}

Comparing Theorem \ref{thm:localaposteriori} with \cite[Theorem
6.1]{carlbergGalDiscOpt} shows that the state-space error bounds for the
conservative Galerkin and conservative LSPG models are in general larger than
the bounds for their unconstrained counterparts; the conservative Galerkin
bound has the addition of the constant  $\normedQuantityG$, while the second
and third terms in the conservative LSPG bound are added. This is to be
expected, as the proposed models do not strictly minimize their associated
residuals; they do so only subject to the satisfaction of nonlinear equality
constraints. Thus, general state-space error bounds that are related to the
full-space residual alone will lead to larger bounds. Instead, if we consider
the components of the error associated with the constraints themselves, we can
derive more favorable bounds.

\begin{lemma}[Local \textit{a posteriori} error bounds in conserved
	quantities]\label{thm:boundConserved}
	The error in the conserved quantities can be bounded as
\begin{align}\label{eq:firstThmResultGalSimple}
	\norm{\meshMapping\errorGal{n}} &\leq  
\sum_{\ell=0}^k
\frac{\abs{\beta_\ell^n}\dt}{\abs{\alpha_0^n}}
\norm{\meshMapping\fSimple{\stateInit +
\stateFOM{n-\ell}}{\timeArg{n-\ell}}-
\meshMapping\fSimple{\stateInit +
\podstate\stateGal{n-\ell}}{\timeArg{n-\ell}}
}+
\sum_{\ell=1}^k
\frac{\abs{\alpha_\ell^n}}{\abs{\alpha_0^n}}
\norm{\meshMapping\errorGal{n-\ell}}\\
\norm{\meshMapping\errorLSPG{n}} &\leq  
\sum_{\ell=0}^k
\frac{\abs{\beta_\ell^n}\dt}{\abs{\alpha_0^n}}
\norm{\meshMapping\fSimple{\stateInit +
\stateFOM{n-\ell}}{\timeArg{n-\ell}}-
\meshMapping\fSimple{\stateInit +
\podstate\stateLSPG{n-\ell}}{\timeArg{n-\ell}}
}+
\sum_{\ell=1}^k
\frac{\abs{\alpha_\ell^n}}{\abs{\alpha_0^n}}
\norm{\meshMapping\errorLSPG{n-\ell}}.
\end{align}
\end{lemma}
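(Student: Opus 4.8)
The plan is to exploit the fact that the conservative Galerkin and conservative LSPG solutions satisfy the decomposed (subdomain) equations \eqref{eq:galDecomp}--\eqref{eq:lspgDecomp} \emph{exactly}, so that the associated Lagrange multipliers are annihilated by the constraint operator $\meshMapping$ and never enter the recursion for $\meshMapping\errorGal{n}$ and $\meshMapping\errorLSPG{n}$. Consequently, in contrast to the state-space bounds of Theorem~\ref{thm:localaposteriori}, no Lipschitz hypothesis or time-step restriction is needed, which is why this lemma carries no assumptions.

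First I would premultiply the FOM O$\Delta$E \eqref{eq:FOMerror} by $\meshMapping$ and subtract the decomposed conservative Galerkin equation \eqref{eq:galDecomp}; using $\errorGal{n} = \stateFOM{n} - \podstate\stateGal{n}$ this gives
\begin{equation*}
\alpha_0^n\,\meshMapping\errorGal{n} = \beta_0^n\dt\,\meshMapping\left[\fSimple{\stateInit + \stateFOM{n}}{\timeArg{n}} - \fSimple{\stateInit + \podstate\stateGal{n}}{\timeArg{n}}\right] + \meshMapping\left(\resFOMPrev - \resPrev{\podstate\stateGal}\right).
\end{equation*}
Next I would expand the history term using the definition \eqref{eq:resPrevRev}, noting that
\begin{equation*}
\resFOMPrev - \resPrev{\podstate\stateGal} = \sum_{\ell=1}^k \beta_\ell^n\dt\left[\fSimple{\stateInit+\stateFOM{n-\ell}}{\timeArg{n-\ell}} - \fSimple{\stateInit+\podstate\stateGal{n-\ell}}{\timeArg{n-\ell}}\right] - \sum_{\ell=1}^k \alpha_\ell^n\,\errorGal{n-\ell}.
\end{equation*}
Applying $\meshMapping$, merging the $\ell=0$ term with the $\ell\geq 1$ velocity-difference terms, taking the Euclidean norm, applying the triangle inequality, and dividing by $\abs{\alpha_0^n}$ then yields \eqref{eq:firstThmResultGalSimple}. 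The LSPG bound follows by the identical argument with \eqref{eq:lspgDecomp}, $\resPrev{\podstate\stateLSPG}$, and $\errorLSPG{n}$ replacing their Galerkin counterparts; crucially, the LSPG test-basis and Lagrange-multiplier terms appearing in \eqref{eq:LSPG} are absent here because they do not appear in the decomposed equation \eqref{eq:lspgDecomp}.

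There is essentially no hard step: the entire content is the observation that premultiplication by $\meshMapping$ converts the constrained ROM equations into the multiplier-free relations \eqref{eq:galDecomp}--\eqref{eq:lspgDecomp}, after which the error recursion is obtained by the same bookkeeping used for unconstrained ROMs. The only point that warrants care is verifying that \eqref{eq:galDecomp}--\eqref{eq:lspgDecomp} genuinely hold for the conservative ROM solutions; this is exactly the statement that the conservation constraints $\resDecompArgs{n}{\stateInit + \podstate\stateRedTime{n}}{\params}=\zero$ are satisfied, combined with the identity $\velocityDecomp(\state,t;\params)=\meshMapping\,\velocityArgs{\state}{t}{\params}$ established in Section~\ref{sec:DD}.
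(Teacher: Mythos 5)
Your proposal is correct and follows exactly the paper's (one-sentence) proof: premultiply Eq.~\eqref{eq:FOMerror} by $\meshMapping$, subtract the multiplier-free decomposed equations \eqref{eq:galDecomp}--\eqref{eq:lspgDecomp}, expand the history term via \eqref{eq:resPrevRev}, and apply the triangle inequality. Your added remarks—that the constraints eliminate the Lagrange multipliers and that consequently no Lipschitz assumption or time-step restriction is needed—are accurate and simply make explicit what the paper leaves implicit.
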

\begin{proof}
	The result can be obtained trivially by 
subtracting Eq.~\eqref{eq:galDecomp} 
from the premultiplication of Eq.~\eqref{eq:FOMerror} by $\meshMapping$ and
applying the triangle inequality.
\end{proof}

Lemma \ref{thm:boundConserved}---while very simple---highlights an important
attribute of the proposed methods. In particular, the new contribution to the
error at time instance $\timeArg{n}$ is due to a term comprising a scalar
multiple of
$ \norm{\meshMapping\fSimple{\stateInit +
\stateFOM{n}}{\timeArg{n}}- \meshMapping\fSimple{\stateInit +
\podstate\stateGal{n}}{\timeArg{n}} }$ and
$\norm{\meshMapping\fSimple{\stateInit +
\stateFOM{n}}{\timeArg{n}}- \meshMapping\fSimple{\stateInit +
\podstate\stateLSPG{n}}{\timeArg{n}} }$ for conservative Galerkin and
conservative LSPG, respectively. In the absence of source terms, this error
associates with the \textit{error in the flux along the faces $\subdomainFaceSet$ of the
decomposed mesh $\meshDecomp$}. In many cases, this error will be quite small.
For example, in the case of global conservation characterized by $\meshDecomp
= \meshDecompGlobal$, $\nSubdomains = 1$, $\subdomainArg{1} = \spaceDomain$
and $\subdomainInterfaceArg{1} =
\Interface$, the error in the globally conserved quantities arises entirely
from the error in the flux computed along the boundary of the domain.

\begin{theorem}[Local \textit{a posteriori} error bounds in conserved
	quantities]\label{thm:boundConserved}
	If ${\bf A_1}$ holds and $\dt < \abs{\alpha_0^n}/(\abs{\beta_0^n}\lipschitzConstant\cond{\meshMapping})$, then 
\begin{align}\label{eq:firstThmResultGal}
	\norm{\meshMapping\errorGal{n}} &\leq  
\sum_{\ell=0}^k
\epsilonCoeffC{\ell}{n}\lipschitzConstant
\norm{\nullBasisCFull^T\errorGal{n-\ell}}+
\sum_{\ell=1}^k
\gammaCoeffC{\ell}{n}
\norm{\meshMapping\errorGal{n-\ell}}\\
\norm{\meshMapping\errorLSPG{n}} &\leq  
\sum_{\ell=0}^k
\epsilonCoeffC{\ell}{n}\lipschitzConstant
\norm{\nullBasisCFull^T\errorLSPG{n-\ell}}+
\sum_{\ell=1}^k
\gammaCoeffC{\ell}{n}
\norm{\meshMapping\errorLSPG{n-\ell}},
\end{align}
where
$\epsilonCoeffC{\ell}{m}\defeq\abs{\beta_\ell^m}\norm{\meshMapping}\dt/\hCoeffC{m}$, 
$\gammaCoeffC{\ell}{m}\defeq(\abs{\alpha_\ell^m} + \abs{\beta_\ell^m}\lipschitzConstant\cond{\meshMapping}\dt)/\hCoeffC{m}$, 
$\hCoeffC{m}\defeq\abs{\alpha_0^m} - \abs{\beta_0^m}\lipschitzConstant\cond{\meshMapping}\dt$.
\end{theorem}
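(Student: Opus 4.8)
The plan is to bootstrap from the elementary estimate of the preceding Lemma~\ref{thm:boundConserved} and then express the full-state error appearing implicitly in the flux-difference term in terms of the conserved-quantity error $\norm{\meshMapping\errorGal{n-\ell}}$ and the null-space error $\norm{\nullBasisCFull^T\errorGal{n-\ell}}$. First I would start from the inequality
\begin{equation*}
	\norm{\meshMapping\errorGal{n}} \leq \sum_{\ell=0}^k \frac{\abs{\beta_\ell^n}\dt}{\abs{\alpha_0^n}}\norm{\meshMapping\fSimple{\stateInit+\stateFOM{n-\ell}}{\timeArg{n-\ell}} - \meshMapping\fSimple{\stateInit+\podstate\stateGal{n-\ell}}{\timeArg{n-\ell}}} + \sum_{\ell=1}^k \frac{\abs{\alpha_\ell^n}}{\abs{\alpha_0^n}}\norm{\meshMapping\errorGal{n-\ell}}
\end{equation*}
established there, and bound each flux-difference term by submultiplicativity together with assumption ${\bf A_1}$: $\norm{\meshMapping(\fSimple{\stateInit+\stateFOM{m}}{\timeArg{m}} - \fSimple{\stateInit+\podstate\stateGal{m}}{\timeArg{m}})} \leq \norm{\meshMapping}\lipschitzConstant\norm{\errorGal{m}}$.

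Next I would invoke the orthogonal decomposition $\RR{\ndof} = \range{\rightSingCFull}\oplus\range{\nullBasisCFull}$ introduced before Lemma~\ref{thm:boundNull}, so that $\errorGal{m} = \rightSingCFull\rightSingCFull^T\errorGal{m} + \nullBasisCFull\nullBasisCFull^T\errorGal{m}$ and hence $\norm{\errorGal{m}} \leq \norm{\rightSingCFull^T\errorGal{m}} + \norm{\nullBasisCFull^T\errorGal{m}}$. Since $\meshMapping = \leftSingC\SingC\rightSingCFull^T$ with $\SingC$ invertible (as $\meshMapping$ has full row rank), one has $\rightSingCFull^T\errorGal{m} = \SingC^{-1}\leftSingC^T\meshMapping\errorGal{m}$, whence $\norm{\rightSingCFull^T\errorGal{m}} \leq \norm{\SingC^{-1}}\norm{\meshMapping\errorGal{m}}$; recognizing $\norm{\meshMapping}\norm{\SingC^{-1}} = \cond{\meshMapping}$ over the nonzero singular values gives $\norm{\meshMapping}\lipschitzConstant\norm{\errorGal{m}} \leq \lipschitzConstant\cond{\meshMapping}\norm{\meshMapping\errorGal{m}} + \norm{\meshMapping}\lipschitzConstant\norm{\nullBasisCFull^T\errorGal{m}}$. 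Substituting this back, the $\ell=0$ summand produces a multiple of $\norm{\meshMapping\errorGal{n}}$ on the right-hand side; I would move it to the left, leaving the factor $1 - \abs{\beta_0^n}\lipschitzConstant\cond{\meshMapping}\dt/\abs{\alpha_0^n}$, i.e.\ $\hCoeffC{n}/\abs{\alpha_0^n}$, which is positive precisely under the hypothesis $\dt < \abs{\alpha_0^n}/(\abs{\beta_0^n}\lipschitzConstant\cond{\meshMapping})$. Dividing through by $\hCoeffC{n}$ and collecting the remaining terms yields \eqref{eq:firstThmResultGal} with $\epsilonCoeffC{\ell}{n}$ and $\gammaCoeffC{\ell}{n}$ as defined. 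The conservative LSPG bound follows from the identical argument applied to the LSPG analogue of Lemma~\ref{thm:boundConserved} and to equations~\eqref{eq:FOMerror} and \eqref{eq:lspgDecomp}; no Lagrange-multiplier terms intervene because \eqref{eq:lspgDecomp} already encodes the (strictly enforced) time-discrete conservation constraint.

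The computation is essentially routine; the only step requiring care is ensuring that the splitting $\errorGal{m} = \rightSingCFull\rightSingCFull^T\errorGal{m} + \nullBasisCFull\nullBasisCFull^T\errorGal{m}$ is genuinely orthogonal so the triangle inequality loses no constant, and correctly identifying $\norm{\meshMapping}\norm{\SingC^{-1}}$ with $\cond{\meshMapping}$ rather than with $\norm{\meshMapping}$ alone. This is also exactly what forces $\cond{\meshMapping}$ (and not merely $\norm{\meshMapping}$) into the time-step restriction, which is the substantive difference between this bound and the elementary one in Lemma~\ref{thm:boundConserved}.
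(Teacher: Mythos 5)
Your proposal is correct and follows essentially the same route as the paper: the paper's proof likewise starts from the subtraction of Eq.~\eqref{eq:galDecomp} from $\meshMapping\times$Eq.~\eqref{eq:FOMerror} (which is exactly what yields the preceding elementary lemma you bootstrap from), bounds the flux difference via ${\bf A_1}$ and $\norm{\meshMapping}$, splits the state error using $\pinvmeshMapping\meshMapping = \rightSingCFull\rightSingCFull^T$ so that $\norm{\errorGal{m}} \leq \norm{\pinvmeshMapping}\norm{\meshMapping\errorGal{m}} + \norm{\nullBasisCFull^T\errorGal{m}}$, identifies $\norm{\meshMapping}\norm{\pinvmeshMapping}=\cond{\meshMapping}$, and absorbs the $\ell=0$ term into the left-hand side using the time-step restriction to obtain $\hCoeffC{n}$. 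The same steps applied to Eq.~\eqref{eq:lspgDecomp} give the LSPG bound, exactly as you indicate.
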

\begin{proof}
Subtracting Eq.~\eqref{eq:galDecomp} 
from the premultiplication of Eq.~\eqref{eq:FOMerror} by $\meshMapping$ yields
\begin{equation}
\alpha^n_0\meshMapping\errorGal{n} = \beta_0^n\dt\meshMapping\left(
\fSimple{\stateInit + \stateFOM{n}}{\timeArg{n}} - \fSimple{\stateInit +
\podstate\stateGal{n}}{\timeArg{n}}\right) + \meshMapping\resGalDiffC,
\end{equation}
where $\resGalDiffC\defeq\resFOMPrev-\resGalPrevC$.
Applying the triangle inequality yields
\begin{equation}
	\abs{\alpha^n_0}\norm{\meshMapping\errorGal{n}} \leq
	\abs{\beta_0^n}\dt\norm{\meshMapping}\norm{
\fSimple{\stateInit + \stateFOM{n}}{\timeArg{n}} - \fSimple{\stateInit +
\podstate\stateGal{n}}{\timeArg{n}}} + \norm{\meshMapping\resGalDiffC}.
\end{equation}
Now, using Lipschitz continuity and $\bds y = \rightSingCFull\rightSingCFull^T\bds y + \nullBasisCFull\nullBasisCFull^T\bds y$ for all $\bds y\in\RR{\ndof}$ with $\pinvmeshMapping\meshMapping = \rightSingCFull\rightSingCFull^T$, we have
\begin{equation}
	\abs{\alpha^n_0}\norm{\meshMapping\errorGal{n}} \leq
	\abs{\beta_0^n}\dt\norm{\meshMapping}\lipschitzConstant\left(\norm{\pinvmeshMapping}\norm{
	\meshMapping\errorGal{n}}  + \norm{\nullBasisCFull^T\errorGal{n}}\right) +
	\norm{\meshMapping\resGalDiffC}.
\end{equation}
Now, using $\dt <
\abs{\alpha^n_0}/(\abs{\beta_0^n}\lipschitzConstant\cond{\meshMapping})$ and
$\cond{\meshMapping} =
\sigma_\mathrm{max}(\meshMapping)/\sigma_\mathrm{min}(\meshMapping)=\norm{\meshMapping}\norm{\pinvmeshMapping}$, we have
\begin{equation}
\label{eq:localOneC}
	\norm{\meshMapping\errorGal{n}} \leq  
	\frac{\abs{\beta_0^n}\dt\norm{\meshMapping}\lipschitzConstant}{\hCoeffC{n}}
	\norm{\nullBasisCFull^T\errorGal{n}}+
	\frac{1}{\hCoeffC{n}}\norm{\meshMapping\resGalDiffC}.
\end{equation}
	Next, we estimate $\norm{\meshMapping\resGalDiffC}$. First, we have
	 \begin{equation} 
	\meshMapping\resGalDiffC=\sum_{\ell=1}^k\beta_\ell^n\dt\meshMapping\left(\fSimple{\stateInit+\stateFOM{n-\ell}}{\timeArg{n-\ell}}-\fSimple{\stateInit+\podstate\stateGal{n-\ell}}{\timeArg{n-\ell}}\right)-\sum_{\ell=1}^k\alpha^n_\ell\meshMapping\left(\stateFOM{n-\ell}-\podstate\stateGal{n-\ell}\right).
	 \end{equation} 
	Applying the triangle inequality and following the above steps yields
	 \begin{equation} \label{eq:finaldRC}
		 \norm{\meshMapping\resGalDiffC}\leq\sum_{\ell=1}^k\abs{\beta_\ell^n}\lipschitzConstant\dt\left(
			 \cond{\meshMapping}\norm{\meshMapping\errorGal{n-\ell}}
		 +\norm{\meshMapping}\norm{\nullBasisCFull^T\errorGal{n-\ell}}\right) 
		 +\sum_{\ell=1}^k\alpha^n_\ell\norm{\meshMapping\errorGal{n-\ell}}.
	 \end{equation} 
	Combining inequalities \eqref{eq:localOneC} and \eqref{eq:finaldRC} produces the final result.
\end{proof}

Theorem \ref{thm:boundConserved} shows that---at a given time instance
$\timeArg{n}$---the only new contribution to the error bound arises from the
term $\epsilonCoeffC{0}{n}\lipschitzConstant
\norm{\nullBasisCFull^T\errorGal{n}}$, which associates with error incurred in
the null space to the constraint matrix $\meshMapping$. Thus, even though the
methods explicitly enforce conservation over subdomains, the actual values of
those conserved variables may deviate from their full-order-model
counterparts. This can be interpreted as a closure problem: the errors in the
state component not restricted by the constraints can lead to errors in the
state component restricted by the constraints, i.e., the conserved variables.


	 \section{Numerical experiments}\label{sec:experiments}

This section compares the performance of several reduced-order models on a
parameterization of the quasi-1D Euler equations applied to supersonic flow in
a converging--diverging nozzle.

\subsection{Problem description: quasi-1D Euler equation}
We consider a parameterized quasi-1D Euler equation associated with
modeling inviscid compressible flow in a one-dimensional converging--diverging nozzle with a
continuously varying cross-sectional area \cite[Chapter 13]{maccormackNote};
Figure \ref{fig:converging-diverging_nozzle} depicts the problem geometry. 
\begin{figure}[htbp] 
  \centering 
  \includegraphics[width=0.5\textwidth]{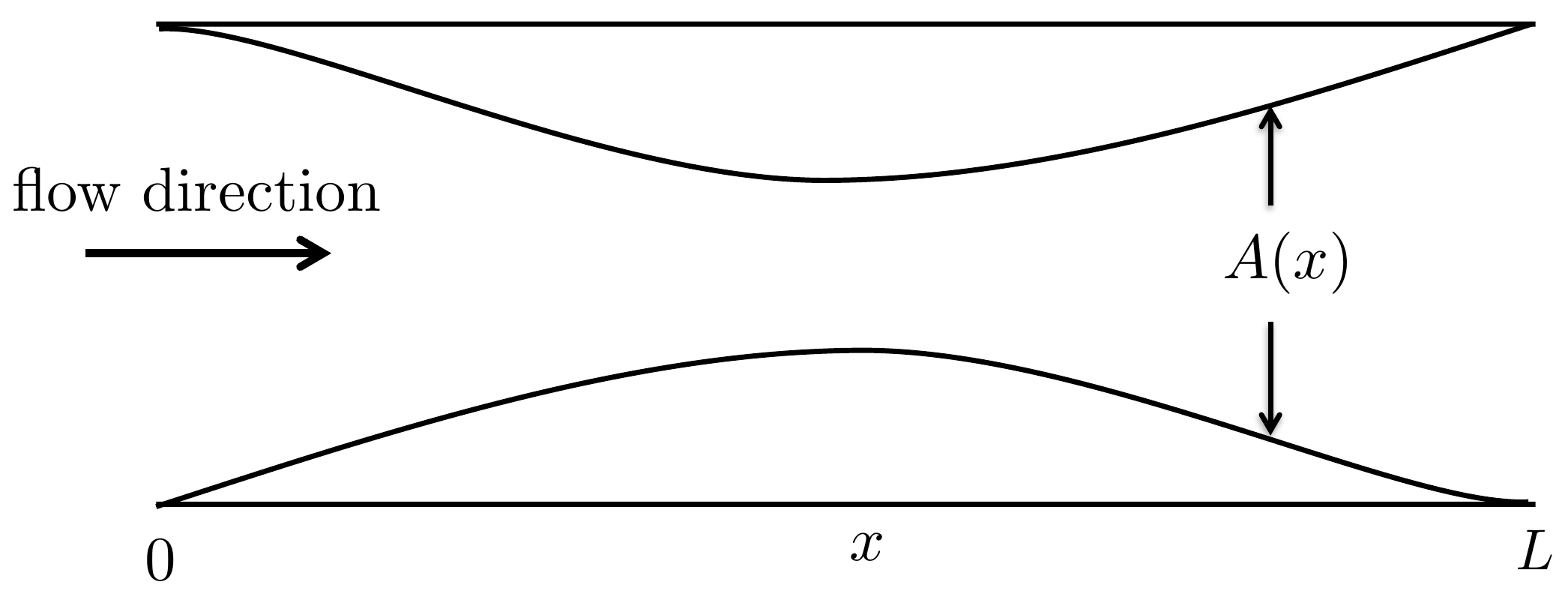} 
	\caption{\textit{Quasi-1D Euler.} \reviewerA{Problem geometry for the
		converging--diverging nozzle.}} 
  \label{fig:converging-diverging_nozzle} 
\end{figure}
\reviewerA{
	In integral form, the governing equations are:
	\begin{align*}\label{eq:PDEexperiment}
	&\frac{d}{dt}\int_{\generalSubdomain}{\crossareaSymb(\spaceVarScalar)\densitySymb}(\spaceVarScalar,\timeVar
		;\params)\dVolScalar&& +
\int_{\generalInterface}\crossareaSymb(\spaceVarScalar)
\densitySymb(\spaceVarScalar,\timeVar
;\params) \uvelocitySymb(\spaceVarScalar,\timeVar
;\params)\sign{\normalVecScalar(\spaceVarScalar)}\dInterfaceScalar =
0
\\
&\frac{d}{dt}\int_{\generalSubdomain}\crossareaSymb(\spaceVarScalar)\densitySymb(\spaceVarScalar,\timeVar
;\params) \uvelocitySymb(\spaceVarScalar,\timeVar
;\params)\dVolScalar&& +
\int_{\generalInterface}
\crossareaSymb(\spaceVarScalar)\left(\densitySymb(\spaceVarScalar,\timeVar
;\params) \uvelocitySymb(\spaceVarScalar,\timeVar
;\params)^2 + \pressureSymb(\spaceVarScalar,\timeVar
;\params)\right)
\sign{\normalVecScalar(\spaceVarScalar)}\dInterfaceScalar \\
& &&=
\int_{\generalSubdomain}
\pressureSymb(\spaceVarScalar,\timeVar
;\params)\frac{\partial \crossareaSymb}{\partial x}(\spaceVarScalar,\timeVar
;\params)
\dVolScalar\\
&\frac{d}{dt}\int_{\generalSubdomain}\crossareaSymb(\spaceVarScalar){\energydensity}(\spaceVarScalar,\timeVar
;\params)\dVolScalar&& +
\int_{\generalInterface}\crossareaSymb(\spaceVarScalar)
(\energydensity(\spaceVarScalar,\timeVar
;\params)+\pressureSymb(\spaceVarScalar,\timeVar
;\params))\uvelocitySymb(\spaceVarScalar,\timeVar
;\params)
\sign{\normalVecScalar(\spaceVarScalar)}\dInterfaceScalar =
0,
\end{align*}
$\forall\generalSubdomain\subseteq
\spaceDomain = [0,\domainLength]$.
}
Thus, the governing system of nonlinear partial differential equations is consistent
with the conservation-law formulation in Eq.~\eqref{eq:PDE}
with $\nspaceDim=1$ spatial dimension, $\nConservation = 3$ conserved variables corresponding to 
density $\conservedQuantity{1} = \reviewerA{\crossareaSymb}\densitySymb$, momentum $\conservedQuantity{2} = \reviewerA{\crossareaSymb}\densitySymb \uvelocitySymb$, and 
energy density $\conservedQuantity{3} = \reviewerA{\crossareaSymb}\energydensity$. The flux corresponds to 
$\fluxScalarArg{1}=\reviewerA{\crossareaSymb}\densitySymb \uvelocitySymb$,
$\fluxScalarArg{2}=\reviewerA{\crossareaSymb}\reviewerA{(}\densitySymb
\uvelocitySymb^2 + \pressureSymb\reviewerA{)}$, and
$\fluxScalarArg{3}=\reviewerA{\crossareaSymb}(\energydensity+\pressureSymb)\uvelocitySymb$, and the source corresponds to
$\sourceEntry{1}=\sourceEntry{3}=0$ and
$\sourceEntry{2}=\pressureSymb\frac{\partial \crossareaSymb}{\partial x}$. 
In addition, we have
      $\pressureSymb = (\specificheat-1)\densitySymb\energypermass$,
$\energypermass = \frac{\energydensity}{\densitySymb} -
\frac{\uvelocitySymb^2}{2}$,
and assume a perfect gas (i.e., $\pressureSymb = \densitySymb \specificgasconstant \temperatureSymb$).
Here,
$\densitySymb$ denotes density,
$\uvelocitySymb$ denotes velocity,
$\pressureSymb$ denotes pressure, 
$\energypermass$ denotes potential energy per unit mass,
$\energydensity$ denotes total energy density,
$\specificheat$ denotes the specific heat ratio, and
$\crossareaSymb$ denotes the converging--diverging nozzle cross-sectional area.
We employ a
specific heat ratio of $\specificheat=1.3$ and
a specific gas constant of $\specificgasconstant=355.4$
$\text{m}^2/\text{s}^2/\text{K}$.
The spatial domain is $\spaceDomain = [0,\domainLength]$ with $\domainLength = 0.25$ m.
The cross-sectional area $A(x)$ is determined by a cubic spline interpolation over the points 
\begin{align}
\begin{split}
(x,A(x)) \in \{& (0,0.035), (0.0208,0.0275), (0.0417,0.0206), 
          (0.0625, 0.0145), (0.0833,0.0097),\\
					 &(0.104,0.0066), (0.125,0.0055),   (0.146,0.0067), (0.1667,0.0107),
          (0.188,0.0178),\\
&  (0.208,0.0283), (0.229,0.0427),
          (0.25,0.0612) \}.
\end{split}
\end{align}
The final time is $\totaltime = 0.29$ s, and we 
employ the backward Euler scheme 
with a uniform time step of $\dt = 0.01$ s
for time discretization. We declare
convergence of the
Newton(-like) solver at each time instance when the
$\ell^2$-norm of the residual reaches $1\times 10^{-5}$ of its value with the
initial guess, which is provided by the solution at the previous time
instance.

The initial flow field is created in several steps.
First, the following isentropic relations are used to generate a zero pressure-gradient flow field at the inlet ($x=0$ m) and the outlet ($x=0.25$ m):
 \begin{align}
   \begin{split}
     \machSymb(x) &= \frac{\machSymb_m\crossareaSymb_m}{\crossareaSymb(x)}\left ( \frac{1+\frac{\specificheat-1}{2}\machSymb(x)^2}{1+\frac{\gamma-1}{2}\machSymb_m^2} \right
  )^{\frac{\specificheat+1}{2(\specificheat-1)}},\quad x\in\{0,0.25\}\ \text{m},
\label{eq:firstIsen}
   \end{split}
 \end{align} 
 where a subscript $m$ indicates the flow quantity at $x=0.125$ m, and
 $\machSymb$ denotes the
 Mach number. The initial Mach number at the middle of the domain is employed as the
problem parameter (i.e., $\paramSymb = \machSymb_m$ with $\nparam = 1$), from which the initial distribution of the Mach number is defined according to a
cubic-spline
interpolation with points $\{\machSymb(0), \paramSymb, \machSymb(0.25)\}$.
 Then, we use the following relations to obtain the rest of the initial flow field for $x\in\spaceDomain$:
 \begin{gather}
  \pressureSymb(x) = \pressureSymb_t\left (1+\frac{\specificheat-1}{2}\machSymb(x)^2\right )^{\frac{-\specificheat}{\specificheat-1}},\quad
   \temperatureSymb(x) = \temperatureSymb_t\left
	(1+\frac{\specificheat-1}{2}\machSymb(x)^2\right )^{-1},\\
  \label{eq:lastIsen}\densitySymb(x) = \frac{\pressureSymb(x)}{R\temperatureSymb(x)},\quad
  \speedofsoundSymb(x) =
	\sqrt{\specificheat\frac{\pressureSymb(x)}{\densitySymb(x)}} ,\quad
  \uvelocitySymb(x) = \machSymb(x) \speedofsoundSymb(x),
 \end{gather} 
where $\speedofsoundSymb$ denotes the speed of sound,
the total temperature is $\temperatureSymb_t = 2800$ K, and
the total pressure is $\pressureSymb_t = 2.068\times10^6$ $\text{N}/\text{m}^2$.

\subsection{Compared methods}
These experiments compare the following methods, which employ
the Tier A-B notation established in Section \ref{sec:hyperreduction2}:
\begin{itemize} 
	\item \textit{\FOM}. This model corresponds to the full-order model, i.e.,
		the solution satisfying Eq.~\eqref{eq:fom}.
	\item \textit{\Gal}. This model corresponds to the Tier 1-0 Galerkin ROM.
			\item \textit{\LSPG}. This model corresponds to the Tier 1-0 LSPG ROM.
	\item \textit{\LSPGcnstd}. This model corresponds to the Tier 1-1 LSPG ROM,
		which is conservative.
	\item \textit{\GNAT}. This model corresponds to the Tier 2-0 LSPG ROM,
		where the residual approximation $\resHyper$ is constructed using
		hyper-reduction method \ref{hyper:res} with gappy POD as described in Section
		\ref{sec:hyperreduction}. The snapshots used to construct $\podres$ are
		constructed during simulation of the \FOM\ method at training instances,
		i.e., the residual tensor $\tensorres(\state)$ is employed as described in
		Section \ref{sec:training}. 
		This corresponds to the GNAT method \cite{CarlbergGappy,carlbergJCP}. 
	\item \textit{\GNATcnstd}. This model corresponds to the Tier 2-1 LSPG ROM,
		which is conservative. While objective function is approximated in the same
		way as in the \GNAT\ method above, no hyper-reduction is applied to the
		constraints.
	\item \textit{\GNATcnstdArg{X}}. This model corresponds to the Tier 2-2
		LSPG ROM, which is \textit{approximately} conservative. The objective
		function is approximated in the same way as in the \GNAT\ method above.
		The residual approximation $\resHyperDecomp$, which appears in the
		constraints, is approximated using hyper-reduction method
		3 with gappy POD as described in Section
		\ref{sec:hyperreduction2}. The snapshots used to construct the required
		reduced-basis matrices $\podsource$ and
		$\podflux$ are
		constructed from data tensors $\tensorsource(\stateDummy)$ and
		$\tensorflux(\stateDummy)$, where $\stateDummy$ corresponds to the
		Method `X' state and X varies during the experiments. The sample
		matrices satisfy $\sampleMatsource=\sampleMatres$ and
		$\sampleMatres\alignment =
		\sampleMatres\alignment\sampleMatflux^T\sampleMatflux$ as described in
		Section \ref{sec:training} such that a single sample mesh can be employed
		for all approximations.
\end{itemize}

In all cases that employ constraints (i.e., Tier A-B ROMs with
$\mathrm{B}\in\{1,2\}$), the subdomains defining the decomposed mesh
$\meshDecomp$ are equally spaced, 
their union is equal to
the global domain (i.e., 
$
\cup_{i=1}^{\nSubdomains}\completesubdomainArg{i}=\spaceDomain $
), and are
non-overlapping (i.e.,
$\measure{\subdomainArg{i}\cap\subdomainArg{j}} = 0$ for $i\neq j$) such that
feasibility implies that all conservative ROMs are globally
conservative (Corollary \ref{cor:global}). If infeasibility is detected 
with $\ndofDecomp \leq \nstateRed$, then 
infeasibility-handling approach \ref{infeas:coarsen} in Section
\ref{sec:infeasible} is employed at that time instance; this amounts to
coarsening the decomposed mesh $\meshDecomp$ by reducing the number of
subdomains by one  and updating the operator
$\meshMapping$ accordingly; in all cases, global conservation was feasible. If instead
$\ndofDecomp > \nstateRed$, in which case feasibility cannot be guaranteed in
general (see
Proposition \ref{prop:sufficientFeasGal}), then 
infeasibility-handling approach \ref{infeas:penalty} in Section
\ref{sec:infeasible} is employed; this amounts to applying a penalty
formulation with a specified penalty parameter $\penaltyParam\in\RRplus{}$.
In all cases, a (Newton) step length of $\linesearchParam{n(k)}=1$ was
employed and led to convergence of the solution to the system of nonlinear
equations arising at each time instance. 
All ROMs employ a training set of $\paramDomainTrain
= \{1.7+0.1j\}_{j=0}^3$ such that $\ntrain = 4$. The online parameter instance
at which the ROMs are simulated is set to $\paramSymb_\star = 1.75$.

We assess the accuracy of any ROM solution $\stateApprox$ using
two metrics: the mean-squared and time-instantaneous state-space error, i.e.,
\begin{align}\label{eq:relErrorExpREv}
	\relativeError &\defeq \sqrt{\sum_{n=1}^\ntimedof
\norm{\stateArg{n}(\params)-\stateApproxArg{n}(\params)}_2^2}/\sqrt{
\sum_{n=1}^\ntimedof\norm{\stateArg{n}(\params)}_2^2 }\\
\relativeErrorInst{n} &\defeq 
\norm{\stateArg{n}(\params)-\stateApproxArg{n}(\params)}_2/
\norm{\stateArg{n}(\params)}_2,\quad n\innatSeq{\ntimedof}\reviewerB{,}
\end{align}
and the mean-squared and time-instantaneous error in the globally conserved variables
\begin{align}
	\relativeGlobalError &\defeq \sqrt{\sum_{n=1}^\ntimedof
\norm{\meshMappingGlobal\stateArg{n}(\params)-\meshMappingGlobal\stateApproxArg{n}(\params)}_2^2}/\sqrt{
	\sum_{n=1}^\ntimedof\norm{\meshMappingGlobal\stateArg{n}(\params)}_2^2
	}\\
	\relativeGlobalErrorInst{n}
	&\defeq\norm{\meshMappingGlobal\stateArg{n}(\params)-\meshMappingGlobal\stateApproxArg{n}(\params)}_2/\norm{\meshMappingGlobal\stateArg{n}(\params)}_2,\quad
	n\innatSeq{\ntimedof},
\end{align}
where $\meshMappingGlobal\in\RRplus{\nConservation\times\ndof}$ is the
operator $\meshMapping$ associated with the global decomposition
$\meshDecomp=\meshDecompGlobal \defeq
\{\domain\}$. We also assess the mean-squared and time-instantaneous violation in global conservation as
\begin{align}
	\violation &= \sqrt{\sum_{n=1}^\ntimedof
\norm{\meshMappingGlobal\resArgs{n}{\stateApproxArg{n}(\params)}{\params}}\reviewerB{_2}}\\
\violationInst{n} &= 
\norm{\meshMappingGlobal\resArgs{n}{\stateApproxArg{n}(\params)}{\params}}_2,\quad
n\innatSeq{\ntimedof}.
\end{align}

All timings are obtained by performing calculations on an Intel(R) Xeon(R) CPU
E5-2670 @ 2.60 GHz, 31.4 GB RAM using the \verb+MORTestbed+
\cite{zahr2010comparison} in Matlab. 

\subsection{\GNATcnstdArg{X}\ snapshot study}\label{sec:gnatSnapshotStudy}
This section assesses the effect of snapshot-collection method on the
performance of the \GNATcnstdArg{X}\ method; all subsequent experiments employ
the snapshot-collection method yielding the best performance.

We set the number of control volumes to $\nControlVol = 100$ such that $\ndof
= \nControlVol\nConservation = 300$, the
reduced-basis dimensions to $\nstateRed = 5$ (which corresponds to a relative
statistical energy of 99.78\%) and $\nresRed=\nfluxRed$, and employ a sample mesh of 20 control volumes,
which corresponds to $\nsamplesres = \nsamplesvelocity=\nsamplesflux = 60$. We
employ a penalty parameter of $\penaltyParam = 10^3$, which is used by
infeasibility-handling approach \ref{infeas:penalty} when $\nConstraints >
\nstateRed$.
In this setting we vary the number of constraints $\nConstraints$ and
flux-basis dimension $\nfluxRed$ and report the relative mean-squared
violation in global conservation over the time interval, i.e., the value of $\violation$ for the given
reduced-order model divided by the value of $\violation$ for the
(unconstrained) \GNAT\ model; note that this value is zero if the
constraint-approximation error is zero
and a feasible solution is computed at each time
instance.

Figure \ref{fig:nCnFstudy} reports the results for this experiment and 
elucidates several trends. First, Figure \ref{GNATcnstdresults} shows that the \GNATcnstd\
model---for which the constraints are enforced exactly---yields near-exact
satisfaction of the conservation laws for $\nConstraints < \nstateRed$; this
implies that a feasible solution was computed at every time instance of that
simulation. 

Second, we note that for $1<\nConstraints/\nstateRed < 2$, the
\GNATcnstd\ model yields approximate but accurate satisfaction of the conservation
laws, as the relative value of $\violation$ is less than $10^{-2}$ in these
cases. 

Third, Figures \eqref{GNATcnstdArgFOMresults}--\eqref{GNATcnstdArgGNATcnstdresults} show that the best results for the \GNATcnstdArg{X}\ method are
obtained for X=\LSPGcnstd\ (Figure \ref{GNATcnstdArgLSPGcnstdresults}) and
X=\GNATcnstd\ (Figure \ref{GNATcnstdArgGNATcnstdresults}); these techniques yield relative
values of $\violation$ for the \GNATcnstdArg{X}\ model less than
$10^{-2}$ for $\nConstraints/\nstateRed < 2$ in almost all cases. 
This result is sensible, as the training
simulations corresponding to (constrained) \LSPGcnstd\ and \GNATcnstd\ are
`closer' to the (constrained) \GNATcnstdArg{X} simulation relative to the (unconstrained)
\FOM,\ \LSPG, and \GNAT\ simulations.
However, in these
cases, the relative value of $\violation$ for $\nConstraints < \nstateRed$ is
small, but not close to machine zero as in the \GNATcnstd\ case because the
constraints are approximated. Thus, these methods---while having a cost
independent of $\nstate$ due to the introduction of hyper-reduction---are only
\textit{approximately} conservative.  

Fourth, we note that the \GNATcnstdArg{\LSPGcnstd}\ and 
\GNATcnstdArg{\LSPGcnstd}\ results are insensitive to the flux-basis dimension $\nfluxRed$ for 
$\nfluxRed$ sufficiently large ($\nfluxRed > 12$).

Finally, we note that while the \GNATcnstdArg{\LSPGcnstd}\ and
\GNATcnstdArg{\GNATcnstd}\ models yield similar accuracy, the latter method
incurs a lower training cost, as the former incurs training simulations with
the 
(Tier 1-1) \LSPGcnstd\ model, while the latter incurs training simulations
with the 
(Tier 2-1) \GNATcnstd\ model.  Thus, the
only \GNATcnstdArg{X}\ method we consider in subsequent experiments is the \GNATcnstdArg{\GNATcnstd}\
approach.
    \begin{figure}[h]
      \begin{center}
				\begin{subfigure}{0.46\textwidth}\includegraphics[width=\textwidth]{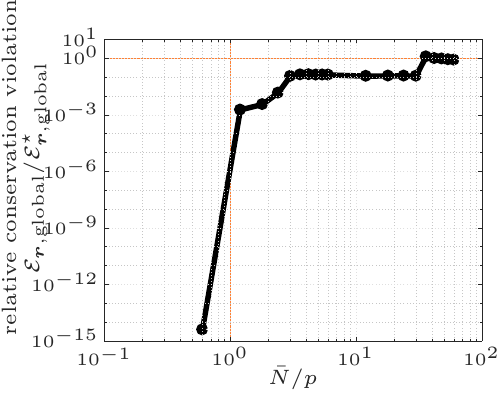}\caption{\GNATcnstd\
				results}\label{GNATcnstdresults}\end{subfigure}
				\begin{subfigure}{0.46\textwidth}\includegraphics[width=\textwidth]{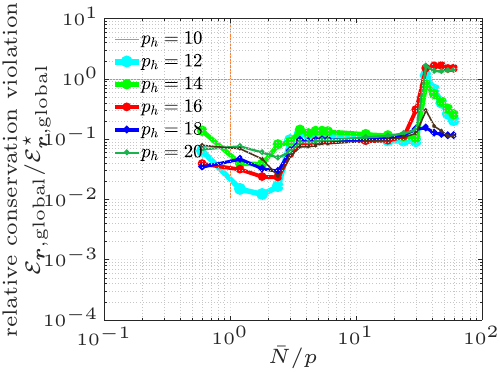}\caption{\GNATcnstdArg{\FOM}
				results}\label{GNATcnstdArgFOMresults}\end{subfigure}
				\begin{subfigure}{0.46\textwidth}\includegraphics[width=\textwidth]{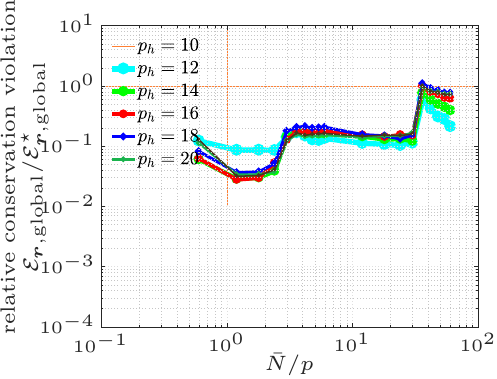}\caption{\GNATcnstdArg{\LSPG}
				results}\label{GNATcnstdArgLSPGresults}\end{subfigure}
				\begin{subfigure}{0.46\textwidth}\includegraphics[width=\textwidth]{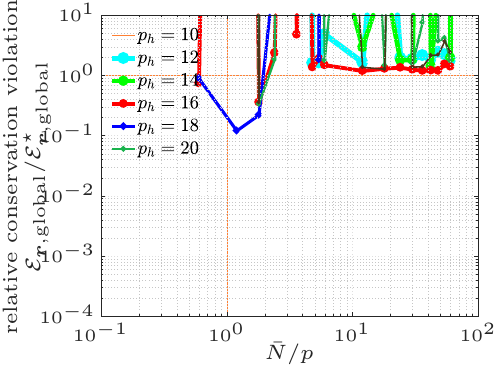}\caption{\GNATcnstdArg{\GNAT}
				results}\label{GNATcnstdArgGNATresults}\end{subfigure}
					\begin{subfigure}{0.46\textwidth}\includegraphics[width=\textwidth]{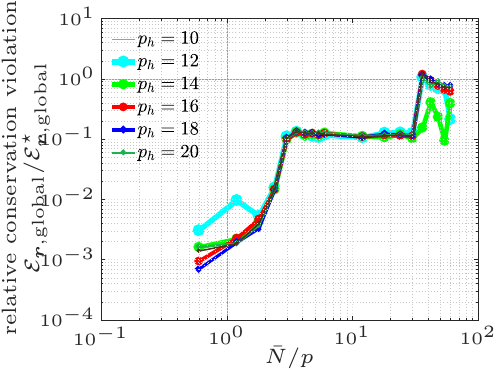}\caption{\GNATcnstdArg{\LSPGcnstd}
					results}\label{GNATcnstdArgLSPGcnstdresults}\end{subfigure}
					\begin{subfigure}{0.46\textwidth}\includegraphics[width=\textwidth]{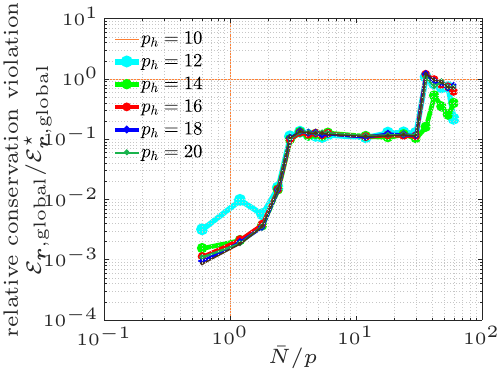}\caption{\GNATcnstdArg{\GNATcnstd}
					results}\label{GNATcnstdArgGNATcnstdresults}\end{subfigure}
      \end{center}
			\caption{\textit{One-dimensional Euler equation}. \GNATcnstd\
				snapshot study \reviewerA{described in Section
			\ref{sec:gnatSnapshotStudy}}. 
			\reviewerA{Figure \ref{GNATcnstdresults} corresponds to the
				(conservative) \GNATcnstd\
				model; other subfigures correspond to different
				(approximately conservative) \GNATcnstdArg{X} methods, which employ
				Method `X' snapshots to construct the required
		reduced-basis matrices $\podsource$ and
		$\podflux$. Within each subfigure, two parameters vary: the
		number of constraints $\nConstraints$ (the reduced-basis dimension
		is fixed to $\nstateRed=5)$, and the dimension of the reduced-basis matrices $\podsource$ and
		$\podflux$, which are enforced to have the same dimension such that
	$\nresRed=\nfluxRed$.} Here, $\violation^\star$ denotes the value of
	$\violation$ obtained for the unconstrained \GNAT\ model.
			\label{fig:nCnFstudy}}
    \end{figure}

		\subsection{Penalty-parameter study}\label{sec:penaltyParamStudy}

This section assesses the effect of the penalty parameter $\penaltyParam$ 
employed by
infeasibility-handling approach \ref{infeas:penalty} when $\nConstraints >
\nstateRed$
on the performance of the (constrained) ROMs \LSPGcnstd, \GNATcnstd, and
\GNATcnstdArg{\GNATcnstd}. All subsequent
experiments employ the penalty parameter yielding the best
performance.

We again set the number of control volumes to $\nControlVol = 100$ such that
$\ndof = \nControlVol\nConservation = 300$, the reduced-basis dimensions to
$\nstateRed = 5$ and $\nresRed=\nfluxRed=\nsourceRed=20$ and again employ a
sample mesh of 20 control volumes, which corresponds to $\nsamplesres =
\nsamplesvelocity=\nsamplesflux = 60$.  We vary the number of constraints
$\nConstraints$ and penalty parameter $\penaltyParam$ and report the
mean-squared state-space error $\relativeError$ and the  (absolute)
mean-squared violation in global conservation over the time interval
$\violation$. We note that a penalty value of $\penaltyParam=\infty$
corresponds to minimizing the norm of the constraints only (i.e., the
objective function is ignored).

Figure \ref{fig:penaltyStudy} reports the results for this experiment. First, we
note that values $\penaltyParam\in\{10,10^2,10^3\}$ yield similar performance,
which outperforms the other tested values. In particular, values of
$\penaltyParam\in\{1,\infty\}$ often yield unstable responses, while
$\penaltyParam=10$ almost always yields larger errors than employing
$\penaltyParam\in\{10,10^2,10^3\}$.

Second, we note that nearly all
cases outperform the unconstrained model, characterized by $\penaltyParam=0$;
this implies that employing the proposed constraints can improve accuracy,
even if the constraints are employed in a penalty formulation rather than as
strictly enforced constraints. 

Third, we observe that the two reported metrics are often correlated: larger
values of 
mean-squared violation in global conservation
$\violation$ typically implies larger values of the
relative mean-squared state-space error $\relativeError$. This lends credibility to the
proposed technique, which aims to reduce the violation in global conservation,
as it suggests that enforcing this constraint (or employing it as a penalty in
the objective function) can lead to more accurate ROMs.

Fourth, the plots indicate that accuracy typically degrades as constraints are
added to the problem, i.e., as the decomposed mesh becomes finer. In
particular, the case $\nConstraints=\ndof$ is equivalent to the unconstrained
case for \LSPGcnstd\ for any value of the penalty parameter $\penaltyParam$,
as $\meshMapping=\identity$ in this case and thus the objective function in
Problem \eqref{eq:LSPGABPenalty} is equal to a scalar multiple of the
objective in the (unconstrained) LSPG Problem
\eqref{eq:LSPGODeltaEStructPresred}. This is not true for the \GNATcnstd\ and
\GNATcnstdArg{\GNATcnstd} \ models, as different hyper-reduction approaches
are employed for the residual in the objective and constraints such that
$\resGalHyperDecomp\neq\resGalHyper$.

In subsequent experiments, we employ a penalty-parameter value of $\penaltyParam = 10^3$.

\begin{figure}[h]
  \begin{center}
	\begin{subfigure}{0.49\textwidth}\includegraphics[width=\textwidth]{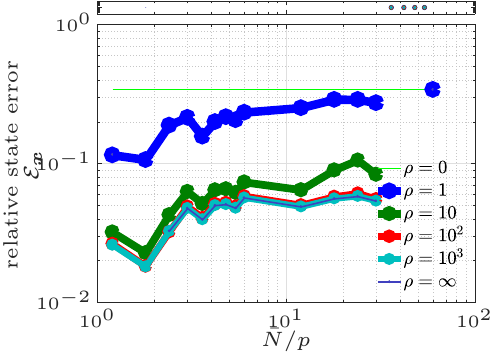}\caption{\LSPGcnstd:
	\reviewerA{$\relativeError$}}\label{}\end{subfigure}
	\begin{subfigure}{0.49\textwidth}\includegraphics[width=\textwidth]{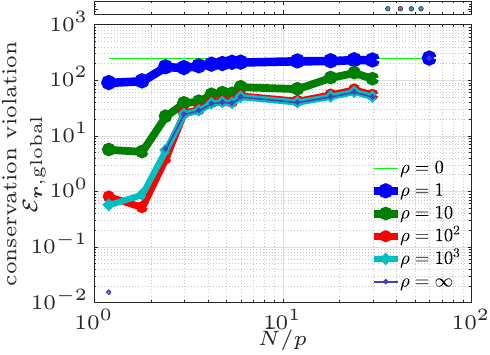}\caption{\LSPGcnstd:
	\reviewerA{$\violation$}}\label{}\end{subfigure}
	\begin{subfigure}{0.49\textwidth}\includegraphics[width=\textwidth]{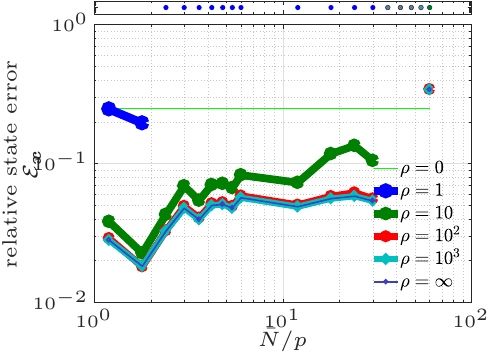}\caption{\GNATcnstd:
	\reviewerA{$\relativeError$}}\label{}\end{subfigure}
	\begin{subfigure}{0.49\textwidth}\includegraphics[width=\textwidth]{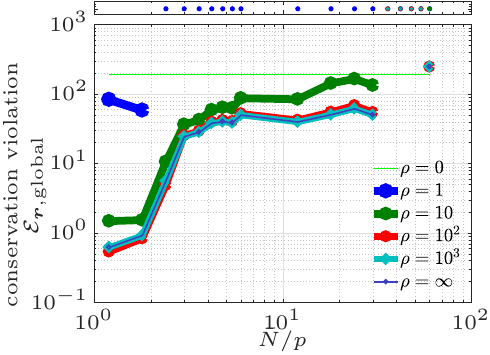}\caption{\GNATcnstd:
	\reviewerA{$\violation$}}\label{}\end{subfigure}
	\begin{subfigure}{0.49\textwidth}\includegraphics[width=\textwidth]{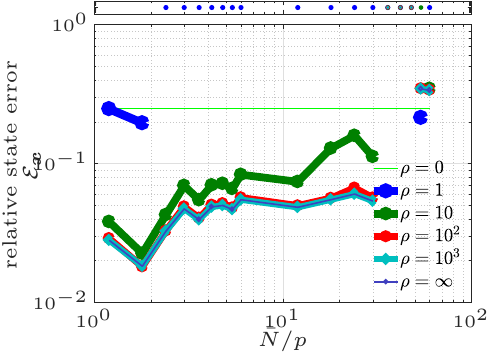}\caption{\GNATcnstdArg{\GNATcnstd}:
	\reviewerA{$\relativeError$}}\label{}\end{subfigure}
	\begin{subfigure}{0.49\textwidth}\includegraphics[width=\textwidth]{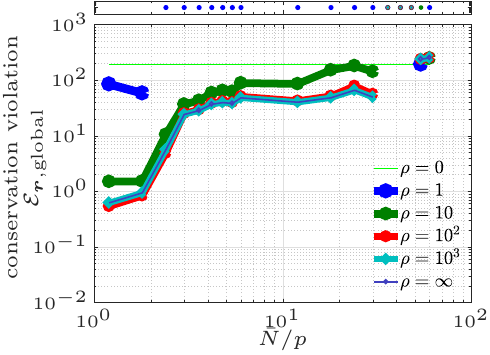}\caption{\GNATcnstdArg{\GNATcnstd}:
	\reviewerA{$\violation$}}\label{}\end{subfigure}
  \end{center}
  \caption{\textit{One-dimensional Euler equation}. \GNATcnstd\
				penalty-parameter study \reviewerA{described in Section
			\ref{sec:penaltyParamStudy}}. The top bar reports cases where the
				reduced-order-model simulation was unstable. 
			\reviewerA{Each row of subfigures corresponds to a different ROM method;
				each column reports a different error measure. Within each subfigure, two parameters vary: the
		number of constraints $\nConstraints$ (the reduced-basis dimension
		is fixed to $\nstateRed=5)$, and the penalty parameter $\penaltyParam$.
		Note that we only consider $\nConstraints/\nstateRed>1$, as infeasibility
does not occur for $\nConstraints/\nstateRed\leq1$.} }
		\label{fig:penaltyStudy}
		\end{figure}

		\subsection{State-basis-dimension study}\label{sec:stateBasisStudy}

This section assesses the effect of basis dimension $\nstateRed$ on the
proposed methods.
We again employ $\nControlVol = 100$ control volumes in the finite-volume
discretization, set
reduced-basis dimensions to  $\nresRed=\nfluxRed=\nsourceRed=20$, employ a
sample mesh with 20 control volumes, and set the penalty parameter to
$\penaltyParam=10^3$.  We vary both the state-basis dimension $\nstateRed$
and the number of constraints
$\nConstraints$ the relative
mean-squared state-space error $\relativeError$ and the  (absolute)
mean-squared violation in global conservation over the time interval
$\violation$. 

Figure \ref{fig:multidomain3} reports the results. First, and most
importantly, we note that Figures \ref{LSPGcnstdrelativestateerror},
\ref{GNATcnstdrelativestateerror}, and \ref{GNATcnstdGNATrelativestateerror}
show that the introduction of constraints yields the most
significant improvements for the smallest basis dimension $\nstateRed=5$. In
these cases,
the relative mean-squared state-space error $\relativeError$ is
reduced by over an order of magnitude for all ROMs, as the
unconstrained ROMs yield errors exceeding 30\%, while their constrained
counterparts employing $\nConstraints=3$ (i.e., global conservation with
$\meshDecomp=\meshDecompGlobal$) all yield errors less than 2\%. In contrast,
for $\nstateRed\geq 7$, the unconstrained ROMs are already quite accurate,
with errors already less then 2\%; incorporating constraints in these cases
does yield accuracy improvements in most cases, although these improvements
are less dramatic. Because the most significant improvements were obtained by
enforcing global conservation with $\nConstraints = 3$, subsequent experiments
employ ROMs that enforce global conservation by using a decomposed mesh of 
$\meshDecomp=\meshDecompGlobal$.

Second, Figures \ref{LSPGcnstdviolation} and \ref{GNATcnstdviolation} show
that the \LSPGcnstd\ and \GNATcnstd\ models produce near-exact satisfaction of
the conservation laws for $\nConstraints < \nstateRed$; this implies that a
feasible solution was computed at every time instance of the corresponding simulations. In
contrast, Figure \ref{GNATcnstdGNATviolation} shows that the
\GNATcnstdArg{\GNATcnstd}\ ROM is only approximately conservative.
Nonetheless, this approximate conservation does not adversely impact the
actual errors produced by the ROM, as the errors reported in Figures \ref{GNATcnstdrelativestateerror}
and \ref{GNATcnstdGNATrelativestateerror} are nearly identical in all cases.
So, while applying hyper-reduction to the constraints results in a loss of
numerically exact satisfaction of global conservation, the results are
extremely similar to the case where the constraints are applied exactly.

\begin{figure}[h]
\begin{center}
\begin{subfigure}{0.49\textwidth}\includegraphics[width=\textwidth]{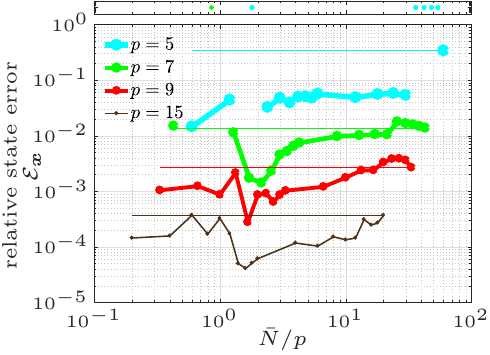}\caption{\LSPGcnstd:
 $\relativeError$}\label{LSPGcnstdrelativestateerror}\end{subfigure}
\begin{subfigure}{0.49\textwidth}\includegraphics[width=\textwidth]{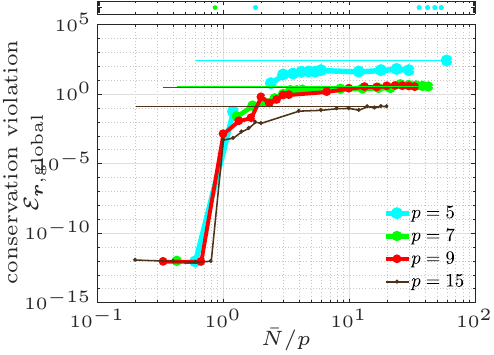}\caption{\LSPGcnstd:
 $\violation$}\label{LSPGcnstdviolation}\end{subfigure}
\begin{subfigure}{0.49\textwidth}\includegraphics[width=\textwidth]{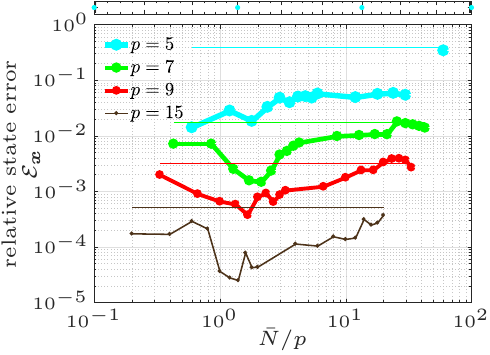}\caption{\GNATcnstd:
 $\relativeError$}\label{GNATcnstdrelativestateerror}\end{subfigure}
 \begin{subfigure}{0.49\textwidth}\includegraphics[width=\textwidth]{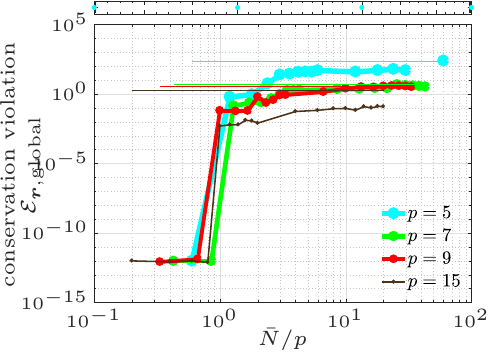}\caption{\GNATcnstd:
 $\violation$}\label{GNATcnstdviolation}\end{subfigure}
 \begin{subfigure}{0.49\textwidth}\includegraphics[width=\textwidth]{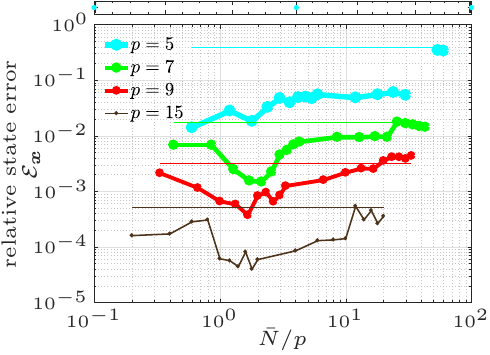}\caption{\GNATcnstdArg{\GNATcnstd}:
 $\relativeError$}\label{GNATcnstdGNATrelativestateerror}\end{subfigure}
 \begin{subfigure}{0.49\textwidth}\includegraphics[width=\textwidth]{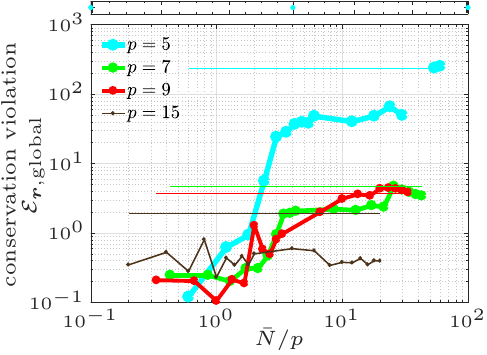}\caption{\GNATcnstdArg{\GNATcnstd}:
 $\violation$}\label{GNATcnstdGNATviolation}\end{subfigure}
\end{center}
\caption{\textit{One-dimensional Euler equation}. \GNATcnstd\
				state-basis-dimension study \reviewerA{described in Section
			\ref{sec:stateBasisStudy}}. The top bar reports cases where the
				reduced-order-model simulation was unstable. The colored horizontal
				lines correspond to the associated unconstrained ROM. \reviewerA{Each row of subfigures corresponds to a different ROM method;
				each column reports a different error measure. Within each subfigure, two parameters vary: the
		number of constraints $\nConstraints$ and the reduced-basis dimension
		$\nstateRed=5)$.}}
\label{fig:multidomain3}
\end{figure}

\subsection{Comparison across all methods}\label{sec:comparisonSteady}

This section assesses the relative performance of the methods over time; all
ROMs that employ constraints enforce global conservation, i.e., $\nConstraints
= 3$ and $\meshDecomp=\meshDecompGlobal$.

We consider two discretizations corresponding to $\nControlVol = 500$ and $\nControlVol=1000$
control volumes in the finite-volume discretization. We set
reduced-basis dimensions to  $\nstateRed = 5$ and
$\nresRed=\nfluxRed=\nsourceRed=20$ and employ a
sample mesh with 20 control volumes.  We report the time-instantaneous
state-space errors $\relativeErrorInst{n}$, $n\innat{\ntimedof}$, 
errors in the globally conserved variables
$\relativeGlobalErrorInst{n}$, $n\innat{\ntimedof}$, and violation in global
conservation $\violationInst{n}$, $n\innat{\ntimedof}$.

Figure \ref{fig:errorOverTime} reports the results. First, we note that the
errors $\relativeErrorInst{n}$ and $\relativeGlobalErrorInst{n}$ exhibit the
same trends in all cases; this suggests that enforcing global
conservation---which leads to lower errors in the globally conserved
quantities by construction---is an effective approach for also reducing the
error in the state itself. This also supports previous observations that
enforcing global conservation rather than employing a penalty approach leads
to smaller errors in most cases.

Second, we observe that the \FOM, \LSPGcnstd, and \GNATcnstd\ models all lead
to  global-conservation violations $\relativeGlobalErrorInst{n}$ near zero as
expected. In contrast, the \GNATcnstdArg{\GNATcnstd}\ approach only
approximately satisfies global conservation due the introduction of
hyper-reduction to the constraints; however, this has no noticeable effect on
its response, as the errors reported for \GNATcnstd\ and
\GNATcnstdArg{\GNATcnstd} are nearly identical in Figures
\ref{relError_N500}--\ref{relError_gCQ_N1000}.

Third, we notice that the conservative methods \LSPGcnstd\ and \GNATcnstd, as
well as the approximately conservative method \GNATcnstdArg{\GNATcnstd},
all yield significantly lower errors than the unconstrained methods Galerkin,
\LSPG, and \GNAT. Further, these unconstrained methods yield significant
violation in global conservation.

\reviewerA{Table \ref{tab:timings} reports the timings for these methods. We
	first note that the LSPG ROM does not have a valid timing for either
	problem, as the associated simulations yield negative pressures and thus do
	not successfully run for the entire time interval (see premature termination
	in Figure \ref{fig:errorOverTime}).  Second, while all other ROMs produce a
	speedup relative to the \FOM, methods that employ hyper-reduction for the
	objective function (\GNAT, \GNATcnstd) produce more significant speedups;
	further applying hyper-reduction to the constraint
	(\GNATcnstdArg{\GNATcnstd}) improves the speedup further.
}

\begin{figure}[h]
\begin{center}
\begin{subfigure}{0.49\textwidth}\includegraphics[width=\textwidth]{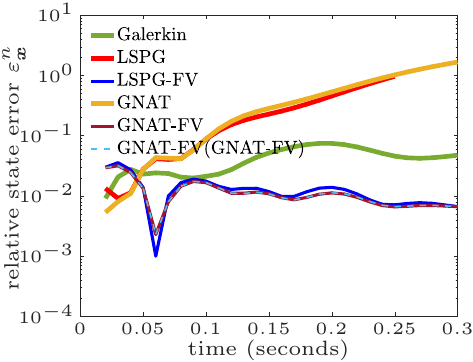}\caption{$\relativeErrorInst{n}$, $\nControlVol=500$}\label{relError_N500}\end{subfigure}
\begin{subfigure}{0.49\textwidth}\includegraphics[width=\textwidth]{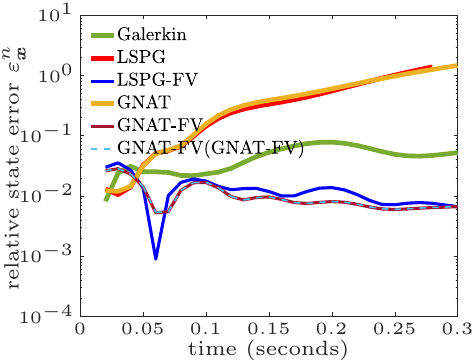}\caption{$\relativeErrorInst{n}$, $\nControlVol=1000$}\label{relError_N1000}\end{subfigure}
\begin{subfigure}{0.49\textwidth}\includegraphics[width=\textwidth]{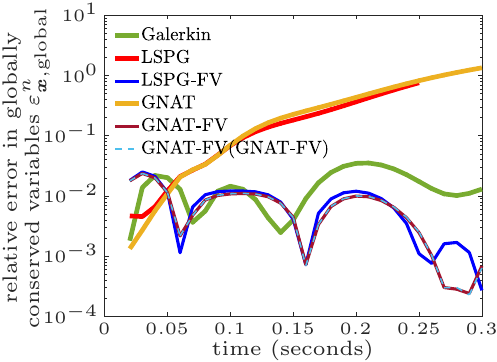}\caption{\reviewerA{$\relativeGlobalErrorInst{n}$,
$\nControlVol=500$}}\label{relError_gCQ_N500}\end{subfigure}
 \begin{subfigure}{0.49\textwidth}\includegraphics[width=\textwidth]{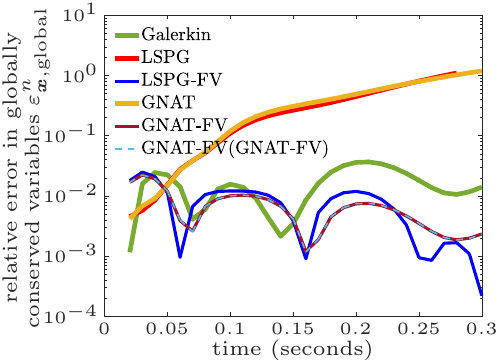}\caption{\reviewerA{$\relativeGlobalErrorInst{n}$,
 $\nControlVol=1000$}}\label{relError_gCQ_N1000}\end{subfigure}
 \begin{subfigure}{0.49\textwidth}\includegraphics[width=\textwidth]{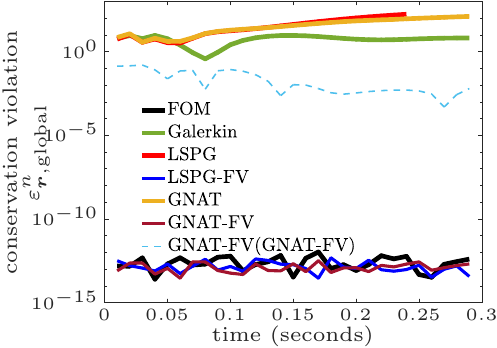}\caption{\reviewerA{$\violationInst{n}$,
 $\nControlVol=500$}}\label{GlobalConservation_N500}\end{subfigure}
\begin{subfigure}{0.49\textwidth}\includegraphics[width=\textwidth]{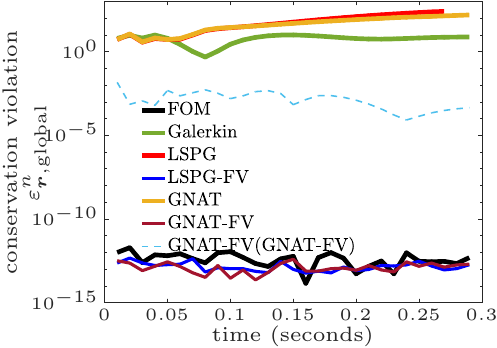}\caption{\reviewerA{$\violationInst{n}$,
$\nControlVol=1000$}}\label{GlobalConservation_N1000}\end{subfigure}
\end{center}
\caption{\textit{One-dimensional Euler equation}. Method comparison
	over time \reviewerA{as described in Section \ref{sec:comparisonSteady}},
	with conservative methods enforcing global conservation. 
	\reviewerA{Each curve depicts the time evolution of a given error measure
	for a given ROM.}
	\reviewerA{Each row
		of subfigures corresponds to a different error measure; the left
	and right subfigure columns} correspond to cases $\nControlVol=500$ and
	$\nControlVol=1000$, respectively.  Note that the missing data for the \LSPG\
method corresponds to time instances after a negative pressure was generated,
thus causing the simulation to end.}
\label{fig:errorOverTime}
\end{figure}

\begin{table}\small
		\reviewerA{
	\begin{center}
 \begin{tabular}{|c||c||c|c|c||c|c|c|} 
\hline
method & \FOM &\Gal &\LSPG &  \LSPGcnstd& \GNAT& \GNATcnstd& \GNATcnstdArg{\GNATcnstd} \\
\hline
\begin{tabular}{@{}c@{}} wall time (seconds)\\ for $\nControlVol = 500$
	\end{tabular}
&  40.9 & 29.1 & N/A & 31.9 &
13.1 & 11.1 & 8.1\\
\begin{tabular}{@{}c@{}} wall time (seconds)\\ for $\nControlVol = 1000$
	\end{tabular}
			&81.2 & 52.2 & N/A & 58.6 & 19.8 & 18.4 & 14.4\\
\hline 
\end{tabular} 
	\end{center}
	\caption{\reviewerA{\textit{One-dimensional Euler equation}. Timings for the
			ROM methods assessed in Section \ref{sec:comparisonSteady}. Here, all
ROMs that employ constraints enforce global conservation, i.e., $\nConstraints
= 3$ and $\meshDecomp=\meshDecompGlobal$. We set
reduced-basis dimensions to  $\nstateRed = 5$ and
$\nresRed=\nfluxRed=\nsourceRed=20$. ROM methods that use hyper-reduction employ a
sample mesh with 20 control volumes.}\label{tab:timings}}
}
\end{table}

To enable an objective comparison of the ROM methods, we compare their
performance across a wide variation of all method parameters.  We subject each
model to a parameter study wherein each model parameter is varied between
the limits specified in Table \ref{tab:steadyParamsPareto}. From these
results, we then construct a Pareto front for each method, which is
characterized by the method parameters that minimize the competing objectives
of error and wall time. 

\begin{table}
	\begin{center}
 \begin{tabular}{|c||c|c|} 
\hline
method & \LSPG, \LSPGcnstd& \GNAT, \GNATcnstd, \GNATcnstdArg{\GNATcnstd} \\
\hline
$\nstateRed$ &  $\{4,5,6\}$ & $\{4,5,6\}$  \\
$\nsamplesres=\nsamplessource=\nsamplesflux$  &    & $\{60,90\}$\\
$\nresRed$   &    & $\{10,20,30\}$   \\
$\nfluxRed$&    &  $\{10 + 5j\}_{j=0}^4$     \\
$\nsourceRed$  &    &  $\{10 + 5j\}_{j=0}^4$ \\
\hline 
\end{tabular} 
	\end{center}
\caption{\textit{One-dimensional Euler equation}. 
	Parameters varied for different ROMs to generate the Pareto
front\reviewerA{s reported in Figure \ref{fig:paretoFrontBurgers} as described
in Section \ref{sec:comparisonSteady}}.}\label{tab:steadyParamsPareto}
\end{table}

 Figure \ref{fig:paretoFrontBurgers} reports these Pareto fronts, where both
 the mean-squared state-space error $\relativeError$ and mean-squared
 violation in global conservation $\violation$ are considered as error
 measures, as well as an `overall' Pareto front that selects the
 Pareto-optimal methods across all parameter variations. \reviewerA{Note that
	 this figure reports the relative wall time with respect to that of the FOM simulation;
 relative wall times less than one imply the ROM yields a speedup.} Here, Figure
 \ref{paretoRelError_N500} shows that the \GNATcnstdArg{\GNATcnstd}\ method is
 always Pareto dominant for error measure $\relativeError$, as no other method is both less expensive and more
 accurate for any tested parameter combination. The method that performs
 second best is the proposed \GNATcnstd\ method, which exactly enforces constraints;
 note that it is only slightly more expensive than the
 \GNATcnstdArg{\GNATcnstd}\ method, as the benefit of performing
 hyper-reduction on the residual appearing in the constraints with $\nConstraints$ small is much less
 significant than the benefit of performing hyper-reduction on the residual
 appearing in the objective
 function when $\nConstraints$ is small. In particular, note that the
 Pareto-optimal parameter combinations for the \LSPGcnstd\ method yield
 similar accuracy to the Pareto-optimal \GNATcnstdArg{\GNATcnstd}\ points, but
 incur significantly larger wall times. Figure
 \ref{paretoGlobalConservation_N500} shows that \GNATcnstdArg{\GNATcnstd}\ is
 Pareto optimal for error measure $\violation$ for \reviewerA{relative} wall
 times less than \reviewerA{0.28}, but \reviewerA{\GNATcnstd, which enforces}
 constraints exactly\reviewerA{,} \reviewerA{is} Pareto
 optimal for larger \reviewerA{relative wall} times, yielding near-zero violations in global
 conservation. We emphasize that 
 both conservative variants of the \GNAT\ method (i.e., \GNATcnstd\ and
 \GNATcnstdArg{\GNATcnstd}) outperform the original \GNAT\ approach, and the
 conservative variant of the \LSPG\ method (i.e., \LSPGcnstd) outperforms the
 original \LSPG\ method; this demonstrates the benefit of the proposed method
 and the performance improvement gained by enforcing conservation.
 \reviewerA{In particular, note that the introduction of constraints does not
	 adversely affect ROM wall-time performance; in fact, \LSPGcnstd\ has
	 \textit{better} wall-time performance relative to the
	 the \LSPG\ method. This occurs because global conservation corresponds
	 to only $\nConstraints = 3$ constraints in this case, and because these
 constraints lead to improved accuracy and thus promote convergence, the
 associated simulations require fewer iterations to solve the optimization
 problem at each time instance.}
 \reviewerA{We also note that hyper-reduction is needed to realize significant
	 speedups: Pareto-optimal parameter combinations for ROMs employing
	 hyper-reduction lead to relative wall times less than 0.36, while
Pareto-optimal parameter combinations for ROMs without hyper-reduction 
yield relative wall times exceeding 0.6.}
\begin{figure}[h]
\begin{center}
\begin{subfigure}{0.49\textwidth}\includegraphics[width=\textwidth]{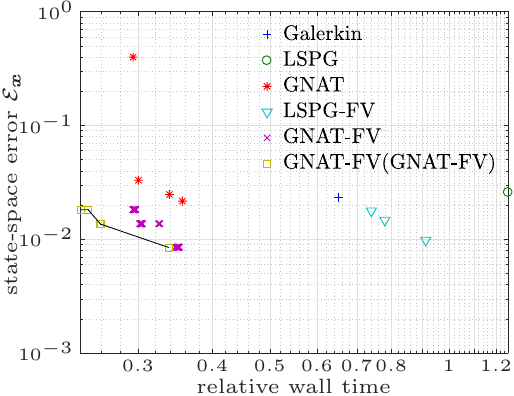}\caption{Pareto
front with error measure $\relativeError$}\label{paretoRelError_N500}\end{subfigure}
\begin{subfigure}{0.49\textwidth}\includegraphics[width=\textwidth]{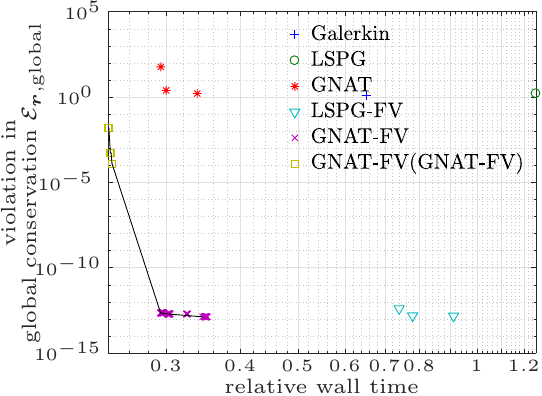}\caption{Pareto
front with error measure $\violation$}\label{paretoGlobalConservation_N500}\end{subfigure}
\end{center}
\caption{\textit{One-dimensional Euler equation}. 
	Pareto-optimal performance of various methods after
		varying model parameters reported in Table
		\ref{tab:steadyParamsPareto}
		for  $\nControlVol=500$ \reviewerA{as described in Section \ref{sec:comparisonSteady}}. \reviewerA{Wall times are reported relative to that
		of the FOM simulation.} \reviewerA{Note that the Pareto-optimal ROM
		methods in terms of minimizing error and wall time are the proposed
\GNATcnstd\ and \GNATcnstdArg{\GNATcnstd}\ methods.}}
	\label{fig:paretoFrontBurgers}
\end{figure}

	 \section{Conclusions}\label{sec:conclusions}
	 This work proposed two model-reduction methods for finite-volume models
	 that enforce conservation over subdomains: conservative Galerkin and
	 conservative LSPG projection. These methods associate with optimization
	 problems characterized by a minimum-residual objective function and
	 nonlinear equality constraints formulated at the time-continuous and
	 time-discrete levels, respectively. We equipped these methods with
	 techniques for handling infeasible constraints, and we also developed
	 hyper-reduction methods to ensure low-cost ROM simulations in the presence
	 of nonlinear flux or source terms.

	 We performed analysis that demonstrated commutativity of conservative
	 Galerkin projection and time discretization, developed sufficient conditions
	 for feasibility, demonstrated conditions under which conservative Galerkin
	 and conservative LSPG models are equivalent, and derived \textit{a
	 posteriori} error bounds.
	 Numerical experiments on a model problem highlighted the benefit of
	 conservative projection, and also demonstrated that enforcing global
	 conservation led to the most accurate results.

	 Future work involves implementing the proposed techniques in
	 a production-level computational fluid-dynamics code, demonstrating the
	 methods on truly large-scale finite-volume models, and investigating
	 combining the methodology with space--time projection approaches
	 \cite{volkwein2006algorithm,baumann2016space,choiCarlberg}, as these
	 techniques have demonstrated error bounds that grow slowly in time. 

	 \section*{Acknowledgments}
We thank Matthew Barone and Irina Tezaur for insightful conversations related
to structure-preserving model reduction in fluid dynamics. We also thank J.\
Nathan Kutz for his help in forging the collaboration. This work was
funded by Sandia's Laboratory Directed Research and Development (LDRD)
program under Project \#190968.
Sandia National Laboratories is a multimission laboratory
managed and operated by National Technology and Engineering Solutions of
Sandia, LLC., a wholly owned subsidiary of Honeywell International, Inc.,
for the U.S.\ Department of Energy's National Nuclear Security
Administration
under contract DE-NA-0003525.

\bibliography{references}
\bibliographystyle{siam}
\end{document}